\newcommand{\2}{{L^2\left(\mathbb{R}^3\right)}}
\newcommand{\Linfty}{L^\infty\left(\mathbb{R}^3\right)}
\DeclareMathOperator{\dive}{{div\hspace{0.2mm}}}
\newcommand{\lee}{\lambda_{\epsilon_1,\epsilon_2}}
\newcommand{\R}{\mathbb{R}}
\newcommand{\sumf}{\sum_{\left| q-q' \right|\leqslant4}}
\newcommand{\sumi}{\sum_{q'>q-4}}
\newcommand{\tq}{\Delta_q}
\newcommand{\Sq}{S_q}
\newcommand{\hra}{\hookrightarrow}
\newcommand{\T}{\mathbb{T}}
\newcommand{\B}{\mathcal{B}}
\newcommand{\ub}{\begin{pmatrix} u^\varepsilon \\ b^\varepsilon \end{pmatrix}}
\newcommand{\Prr}{\mathcal{P}_{r,R}}
\newcommand{\Crr}{\mathcal{C}_{r,R}}
\newcommand{\tU}{\widetilde{U}^\varepsilon}
\newcommand{\bU}{\overline{U}^\varepsilon}
\newcommand{\Hs}{{H^s\left( \R^3 \right)}}
\newcommand{\Woi}{W^{1,\infty}\left( \R^3 \right)}
\newcommand{\qg}{{quasi-geostrophic }}
\newcommand{\NS}{Navier-Stokes }
\newcommand{\eps}{\varepsilon}
\newcommand{\bgam}{\overline{\gamma}}
\newcommand{\NN}{\mathbb{N}}
\newcommand{\RR}{\mathbb{R}}
\newcommand{\dd}{\partial}
\newcommand{\abs}[1]{\left\vert#1\right\vert}
\newcommand{\set}[1]{\left\{#1\right\}}
\newcommand{\psca}[1]{\left\langle#1\right\rangle}
\newcommand{\pint}[1]{\left[#1\right]}
\newcommand{\pare}[1]{\left(#1\right)}
\newcommand{\norm}[1]{\left\Vert#1\right\Vert}
\newcommand{\spres}{\sum_{\abs{q'-q}\leqslant 4}}
\newcommand{\sloin}{\sum_{q'> q-4}}
\newcommand{\ds}{\displaystyle}
\theoremstyle{theorem}
\newtheorem{theorem}{Theorem}[section]
\newtheorem{prop}[theorem]{Proposition}
\newtheorem{lemma}[theorem]{Lemma}
\newtheorem{defn}[theorem]{Definition}
\theoremstyle{definition}
\newtheorem{rem}[theorem]{Remark}
\numberwithin{equation}{section}
\begin{document}

\title{Dispersive effects of weakly compressible and fast rotating inviscid fluids}

\author[V.-S. Ngo \& S. Scrobogna]{Van-Sang Ngo \& Stefano Scrobogna}

\thanks{The research of the second author was partially supported by the Basque Government through the BERC 2014-2017 program and by the Spanish Ministry of Economy and Competitiveness MINECO: BCAM Severo Ochoa accreditation SEV-2013-0323.}

\date{\today}

\address{\noindent \textsc{V.-S. Ngo,  Universit\'e de Rouen, Laboratoire de Math\'ematiques Rapha\"el Salem, UMR 6085 CNRS, 76801 Saint-Etienne du Rouvray, France}}
\email{van-sang.ngo@univ-rouen.fr}

\address{\noindent \textsc{S. Scrobogna, IMB, Universit\'e de Bordeaux, 351, cours de la Lib\'eration, 33405 Talence, France, \& Basque Center for Applied Mathematics, Mazarredo, 14,  E48009 Bilbao, Basque Country – Spain}}
\email{stefano.scrobogna@math.u-bordeaux1.fr}
 
\keywords{Compressible fluids, Strichartz estimates, symmetric hyperbolic systems}
\subjclass[2000]{35A01, 35A02, 35Q31, 76N10, 76U05}

\begin{abstract}
	We consider a system describing the motion of an isentropic, inviscid, weakly compressible, fast rotating fluid in the whole space $\RR^3$, with initial data belonging to $ H^s \left( \R^3 \right), s>5/2 $. We prove that the system admits a unique local strong solution in $ L^\infty \left( [0,T]; H^s\left( \R^3 \right) \right) $, where $ T $ is independent of the Rossby and Mach numbers. Moreover, using Strichartz-type estimates, we prove the longtime existence of the solution, \emph{i.e.} its lifespan is of the order of $\varepsilon^{-\alpha}, \alpha >0$, without any smallness assumption on the initial data (the initial data can even go to infinity in some sense), provided that the rotation is fast enough. 
\end{abstract}

\maketitle

\section{Introduction}

In this paper, we consider the following system of weakly compressible, fast rotating fluids in the whole space $\RR^3$
\begin{equation}
	\tag{CRE$_{\varepsilon,\theta}$}
	\label{compressible_rotating_E}
	\left\lbrace
	\begin{aligned}
		&\partial_t\left(  \rho^{\varepsilon,\theta} u^{\varepsilon,\theta} \right) + \dive \left(\rho^{\varepsilon,\theta} u^{\varepsilon,\theta} \otimes u^{\varepsilon,\theta} \right)  + \frac{1}{\theta^2}\nabla P\left( \rho^{\varepsilon,\theta} \right) + \frac{1}{\varepsilon} e^3\wedge \left(  \rho^{\varepsilon,\theta} u^{\varepsilon,\theta} \right) =0\\
		&\partial_t \rho^{\varepsilon,\theta} + \dive  \left( \rho^{\varepsilon,\theta} u^{\varepsilon,\theta} \right) = 0\\
		&\left. \left( \rho^{\varepsilon,\theta}, u^{\varepsilon,\theta} \right)\right|_{t=0}= \left( \rho^{\varepsilon,\theta}_0, u^{\varepsilon,\theta}_0 \right).
	\end{aligned}
	\right.
\end{equation}
Here, the Rossby number $\eps$ represents the ratio of the displacement due to inertia to the displacement due to Coriolis force. On a planetary scale, the displacement due to inertial forces, \emph{i.e.} the collision of air molecules (in the case of the atmosphere) or water  molecules (in the case of oceans) is generally much smaller than the relative displacement due to the rotation of the Earth around his own axis. Away from persistent streams such as the Gulf stream, the value of Rossby number is around $10^{-3}$. On the other hand, the Mach number is a dimensionless number representing the ratio between the local flow velocity and the speed of sound in the medium. For geophysical fluids appearing in meteorology for exemple, the Mach number $\theta$ is also very small.

We want to have a few words about the low Mach-number regime and the fast rotation limit. For the incompressible limit, the fluid is expected to have an incompressible behaviour. In the fast rotation limit, the Coriolis force becomes dominant and plays a very important role. Indeed, the fast rotating fluid have tendency to stabilize and to move in vertical columns (the so-called ``Taylor-Proudman'' columns). This phenomenon can be observed in many geophysical fluids (such as oceanic currents in the western North Atlantic) and is well known in fluid mechanics as the Taylor-Proudman theorem (see \cite{Pedlosky87} for more details).

We remark that if $\eps \ll \theta$ or $\eps \gg \theta$, then either the high rotation or the weak compressibility dominates the other, and one can separately take the high rotation limit and the weak compressible limit. In this paper, we are interested in the case where these two numbers are very small and where the high rotation and weak compressibility limits occur at the same scale, \emph{i.e.} $\theta = \eps \rightarrow 0$. Moreover, in our study, we suppose that the fluid is inviscid and isentropic, which means that it has no viscosity and the pressure satisfies $$P = P(\rho) = A \rho^\gamma,$$ where $A > 0$ and $\gamma > 1$ are given. We refer the reader to \cite{Pedlosky87} and the references therein for further physical explanations, and to \cite{CDGGbook} for a brief physical introduction of fast rotating hydrodynamic systems with strong emphasis on the problem under the mathematical point of view.

\subsection{Weakly compressible, fast rotating fluids and related systems}

Let us give a brief introduction of systems related to \eqref{compressible_rotating_E}. In general, the motion of a compressible fluid with a homogeneous temperature can be derived from the laws of conservation of mass and of linear momentum (see \cite{Bat99}, \cite{LanLifs} or \cite{LionsPL1} for instance), and is described by the following system 
\begin{equation*}
	\left\lbrace
	\begin{aligned}
		& \partial_t \left( \rho u \right) + \dive \left( \rho u \otimes u \right)- \dive \left( \sigma \right)  = \rho f\\
		&\partial_t\rho + \dive \left( \rho u \right)=0.
	\end{aligned}
	\right.
\end{equation*} 
Here, $\sigma$ is the \emph{stress tensor} and $f$ represents the external body forces acting on the fluid (gravity, Coriolis, electromagnetic forces, etc\ldots). For an isotropic \emph{newtonian fluid}, the stress tensor is supposed to be linearly dependent on the strain rate tensor $D = \frac{1}2\pare{\nabla u+ {}^T\nabla u}$, and writes
\begin{equation*}
	\sigma = -p \mathbf{1} + \lambda \dive u +  \mu \pare{\nabla u+ {}^T\nabla u},
\end{equation*}
where the scalar function $p$ stands for the \emph{pressure}, $\mathbf{1}$ is the identity matrix (tensor) and $\mu, \lambda$ are the Lam\'e viscosity coefficients (which may depend on the density $\rho$), $\mu > 0$ and $\mu + \lambda > 0$. In fluid mechanics, $\mu$ is referred to as the dynamic viscosity of the fluid and in a case of a barotropic fluid, $p$ is a function of the density $\rho$ only. These considerations lead to the following system describing the motion of a compressible newtonian barotropic fluid
\begin{equation}
	\tag{CNS}\label{eq:compressible_generic}
	\left\lbrace
	\begin{aligned}
		& \partial_t \left( \rho u \right) + \dive \left( \rho u \otimes u \right)- \dive \left( \lambda \dive u +  \mu \left( \nabla u+ {}^T \nabla u \right) \right) + \nabla p  = \rho f\\
		&\partial_t\rho + \dive \left( \rho u \right)=0.
	\end{aligned}
	\right.
\end{equation}

\noindent In the case of low Mach-number flow, the Mach number $\theta$ is suppose to be very small (the fluid is pseudo-incompressible), let
\begin{equation*}
	\rho^\theta \left( t,x \right)= \rho \left( \frac{t}{\theta}, x \right) \quad \mbox{and} \quad u^\theta \left( t,x \right) = \frac{1}{\theta} \; u \left( \frac{t}{\theta} , x \right),
\end{equation*}
so the system \eqref{eq:compressible_generic}, endowed with some initial data, becomes
\begin{equation}
	\tag{CNS$_\theta$}\label{compressible_nonrotating_NS}
	\left\lbrace
	\begin{aligned}
		&\partial_t \left(\rho^\theta u^\theta \right) + \dive \left(\rho^\theta u^\theta \otimes u^\theta \right) - \mu \Delta u^\theta - \lambda \nabla \dive u^\theta  + \frac{1}{\theta^2}\nabla P\big( \rho^\theta \big)  =\rho^\theta f\\
		&\partial_t \rho^\theta + \dive  \left( \rho^\theta u^\theta \right) =0\\
		&\left. \left( \rho^\theta, u^\theta \right)\right|_{t=0}= \left( \rho^\theta_0, u^\theta_0 \right).
	\end{aligned}
	\right.
\end{equation}
In the inviscid case where $\lambda=\mu=0$ and we obtain the following system 
\begin{equation}
	\tag{CE$_\theta$}\label{compressible_nonrotating_E}
	\left\lbrace
	\begin{aligned}
		&\partial_t\left(  \rho^\theta u^\theta \right) + \dive \left(\rho^\theta u^\theta \otimes u^\theta \right)  + \frac{1}{\theta^2}\nabla P\big( \rho^\theta \big) = \rho^\theta f\\
		&\partial_t \rho^\theta + \dive  \left( \rho^\theta u^\theta \right) =0\\
		&\left. \left( \rho^\theta, u^\theta \right)\right|_{t=0}= \left( \rho^\theta_0, u^\theta_0 \right).
	\end{aligned}
	\right.
\end{equation}

Now, for geophysical fluids such as the oceans or the atmosphere, effects of the rotation of the Earth can not be neglected. Rewriting the systems \eqref{compressible_nonrotating_NS} or \eqref{compressible_nonrotating_E} in a rotating frame of reference tied to the Earth, we have to take into account two factors, the Coriolis acceleration and the centrifugal acceleration. We assume that the centrifugal force is in equilibium with the stratification due to the gravity of the Earth, and so can be neglected. We also suppose that the rotation axis is parallel to the $ x_3 $-axis, and that the speed of rotation is constant, which is often considered in the study of geophysical fluids in mid-latitude regions. Then, the system \eqref{compressible_nonrotating_NS} writes
\begin{equation}
	\tag{CRNS$_{\varepsilon,\theta}$}\label{compressible_rotating_NS}
	\left\lbrace
	\begin{aligned}
		&\partial_t\left(  \rho^{\varepsilon,\theta} u^{\varepsilon,\theta} \right) + \dive \left(\rho^{\varepsilon,\theta} u^{\varepsilon,\theta} \otimes u^{\varepsilon,\theta} \right) - \mu \Delta u^{\varepsilon,\theta} - \lambda \nabla \dive u^{\varepsilon,\theta}\\
		&\hspace{7cm}  + \frac{1}{\theta^2}\nabla P\left( \rho^{\varepsilon,\theta} \right) + \frac{1}{\varepsilon} e^3\wedge \left(  \rho^{\varepsilon,\theta} u^{\varepsilon,\theta} \right) =0\\
		&\partial_t \rho^{\varepsilon,\theta} + \dive  \left( \rho^{\varepsilon,\theta} u^{\varepsilon,\theta} \right) =0\\
		&\left. \left( \rho^{\varepsilon,\theta}, u^{\varepsilon,\theta} \right)\right|_{t=0}= \left( \rho^{\varepsilon,\theta}_0, u^{\varepsilon,\theta}_0 \right).
	\end{aligned}
	\right.
\end{equation}
In the case where there is no viscosity, we obtain the system \eqref{compressible_rotating_E}.

\subsection{Brief recall of known results}

For non-rotating fluids, many results have been obtained concerning the systems \eqref{compressible_nonrotating_NS} and \eqref{compressible_nonrotating_E} in the case of \textit{well prepared } initial data, \emph{i.e.} $$\rho_0^\theta= 1+ \mathcal{O}\left( \theta^2 \right) \quad\mbox{and}\quad \dive u_0^\theta=\mathcal{O}\left( \theta \right),$$ for which, we refer to the works \cite{HoffMach}, \cite{KlMaj}, \cite{KLN} or \cite{LinPhD}. In the case of \textit{ill prepared} initial data, it is only assumed that $$\rho_0^\theta = 1 + \theta b^\theta_0$$ and $\left( b_0^\theta, u_0^\theta \right)$ are only bounded in some suitable spaces which does not necessarily belong to the kernel of the penalized operator. If $\mathbb{P}u_0^\theta \to v_0$ when $\theta$ goes to zero\footnote{Here $ \mathbb{P} $ is the Leray  projector on the space of solenoidal vector fields defined as $ \mathbb{P}= I- \Delta^{-1}\nabla\dive  $}, one expects that $u^\theta \to v$ where $v$ is the solution of the incompressible \NS equations 
\begin{equation}
	\tag{INS}\label{incompressible_NS}
	\left\lbrace
	\begin{array}{l}
		\partial_t v + v \cdot \nabla v -\mu \Delta v + \nabla \Pi =0,\\
		\dive v=0\\
		\left. v\right|_{t=0}=v_0.
	\end{array}
	\right.
\end{equation}
The expected convergence is however not easy to be rigorously justified. The main difficulty lies in the fact that one has to deal with the propagation of acoustic waves with speed of order $\theta^{-1}$, a phenomenon which does not occur in the case of well prepared data.

In \cite{LionsPL2}, P.-L- Lions proved the existence of global weak solutions of \eqref{compressible_nonrotating_NS}. The fluid is supposed to be isentropic and the pressure is of the form $P\left( \rho \right)= a\rho^\gamma$, with certain restrictions on $ \gamma $ depending on the space dimension $d$. In the same setting P.-L. Lions and N. Masmoudi in \cite{LM} proved that weak solutions of \eqref{compressible_nonrotating_NS} converges weakly to weak solutions of \eqref{incompressible_NS} in various boundary settings. This result is proved via some weak compactness methods (see also \cite{IGLSR} and \cite{Fanelli2}). In the work of B. Desjardins, E. Grenier, P.-L. Lions and N. Masmoudi \cite{DGLM}, considering \eqref{compressible_nonrotating_NS} with $ f\equiv 0 $, in a bounded domain $\Omega$ with Dirichlet boundary conditions, the authors proved that as  $\theta \to 0$, the global weak solutions of \eqref{compressible_nonrotating_NS} converge weakly in $L^2$ to a global weak solution of the incompressible \NS equations \eqref{incompressible_NS}. In \cite{DG99}, using dispersive Strichartz-type estimates, B. Desjardins and E. Grenier proved that the gradient part of the velocity field (\emph{i.e.} the gradient of the acoustic potential) of the system \eqref{compressible_nonrotating_NS} converges strongly to zero. We also refer to the works of E. Feireisl and H. Petzeltová (see for example \cite{FP1}, \cite{FP2}) concerning the compactness properties and the longtime behaviour of weak solutions of \eqref{compressible_nonrotating_NS}. Finally, we want to mention the works of R. Danchin \cite{Danchin00} and \cite{Danchin02}. In \cite{Danchin00}, the author proved global existence of \textit{strong} solutions for the system \eqref{compressible_nonrotating_NS} for small initial data in some suitable, critical, scale-invariant (Besov) spaces, in the same spirit as in the work of M. Cannone, Y. Meyer and F. Planchon \cite{CMP} or the work of H. Fujita and T. Kato \cite{FK} for the incompressible model. In \cite{Danchin02}, R. Danchin addressed the convergence of \eqref{compressible_nonrotating_NS} to \eqref{incompressible_NS} for ill-prepared initial data when the Mach number $\theta$ tends to zero. When the initial data are small, the author obtains global convergence and existence, while for large initial data with some further regularity assumptions, it is shown that the solution of \eqref{compressible_nonrotating_NS} exists and converges to the solution of \eqref{incompressible_NS} in the same time interval of existence of the solution of \eqref{incompressible_NS}. For compressible inviscid fluids in the non-rotating case, in A. Dutrifoy and T. Hmidi \cite{DH03}, the authors considered the system \eqref{compressible_nonrotating_E} in $ \R^2 $ with initial data not uniformly smooth (\emph{i.e.} the $\mathcal{C}^1$ norm is of order $ \mathcal{O}\left( \theta^{-\alpha} \right), \alpha >0 $). The convergence to strong, global solutions of 2D Euler equation is proved by mean of Strichartz estimates and the propagation of the minimal regularity. In this incompressible limit context, we also mention the work of G. M\'etivier and S. Schochet \cite{MS}.


The literature concerning incompressible fast rotating fluids is broad. Here, we only restrict ourselves to mention some important previous works on the whole space case. For a complete survey of the whole space case and the other cases, we send to the book \cite{CDGGbook} and the references therein. We first recall the works of J.-Y. Chemin, B. Desjardins, I. Gallagher and E. Grenier \cite{CDGG} and \cite{CDGG2} for incompressible viscous rotating fluids, with initial data of the form $$u_0 = \overline{u}_0 + \widetilde{u}_0,$$ where the 2D part $\overline{u}_0$ only depends on $(x_1,x_2)$ and the 3D part $\widetilde{u}_0$ belongs to the anisotropic Sobolev spaces $H^{0,s}$, with $s > \frac{1}2$. It is proved that the 2D part is governed by a 2D incompressible Navier-Stokes system, while the 3D part converges to zero as the Rossby number $\eps \to 0$, using Strichartz estimates obtained for the associated linear free-wave system. As a consequence, if the rotation is fast enough, the solution of the 3D incompressible viscous rotating fluids exists globally in time and converges to the solution of the 2D incompressible Navier-Stokes system. In the case of incompressible, inviscid fluids, however, we cannot get the global existence of strong solutions when the rotation is fast, due to the lack of smoothing effect given by the viscous term. It is proved in A. Dutrifoy \cite{Dutrifoy2} that if the rotation is fast enough ($\eps \to 0$), the solution of an incompressible inviscid rotating fluid can exist in long time intervals of size at least equivalent to $\ln \ln \eps^{-1}$. However, in the case where the viscosity is not zero, but very small (of order $\eps^\alpha$, for $\alpha$ in some interval $[0,\alpha_0[$), when the rotation is fast enough, the global existence of strong solutions can still be proven in the case of pure 3D initial data (see \cite{VSN}).

Let us now focus on fast rotating, compressible fluids. To the best of our knowledge, there is no result yet concerning the inviscid system \eqref{compressible_rotating_E}. In the viscous fast rotating case, in \cite{FGN}, E. Fereisl, I. Gallagher and A. Novotn\'y studied the dynamics, when $\theta = \varepsilon \to 0 $, of weaks solutions of the system \eqref{compressible_rotating_NS} in $\R^2\times ]0,1[$, with non-slip boundary conditions 
\begin{align*}
	\left. u^{\varepsilon, 3}\right|_{x_3=0,1} = 0 \quad \mbox{and} \quad
	\left. \left( S_{2,3}, -S_{1,3},0 \right)\right|_{x_3=0,1}=&0,
\end{align*}
where $ \mathbb{S} $ is the viscous stress tensor 
$$
\mathbb{S}\left( \nabla u \right)= \mu \left( \nabla u + {}^T\nabla u -\frac{2}{3}\dive u I \right).
$$
Their result relies on the spectral analysis of the singular perturbation operator. Using RAGE theorem (see \cite{RS3}), the authors proved the dispersion due to fast rotation and that weak solutions of \eqref{compressible_rotating_NS} converges to a 2D viscous \qg equation for the limit density. We refer to \cite{FGN} for a detailed description of the limit system. In \cite{FGG-VN}, Feireisl, Gallagher, Gérard-Varet and Novotný studied the system \eqref{compressible_rotating_NS} in the case where the effect of the centrifugal force was taken into account. Noticing that this term scales as $\varepsilon^{-2}$, they studied both the isotropic limit and the multi-scale limit: namely, they supposed the Mach-number to be proportional to $\varepsilon^m $, for $m\geqslant 1$. We want to point out that, in the analysis of the isotropic scaling ($m=1$), the authors had to resort to compensated compactness arguments in order to pass to the limit: as a matter of fact, the singular perturbation operator had variable coefficients, and spectral analysis tools were no more available. Recently in \cite{Fanelli1}, F. Fanelli proved a similar result as the one proved in \cite{FGN}, by adding to the system \eqref{compressible_rotating_NS} a capillarity term and studying various regimes depending on some positive parameter.

To complete our brief survey of known results, we want to remark that all the compressible systems previously mentioned are isothermal. In the case of variable temperature, the generic system governing a heat conductive, compressible fluid is the following 
\begin{equation}
	\label{compressible_heat_conductive_NS}
	\tag{HCCNS}
	\left\lbrace
	\begin{aligned}
		&\partial_t \rho + \dive \left( \rho u \right)=0,\\
		& \partial_t \left( \rho u \right) + \dive \left( \rho u \otimes u \right)- \dive \left( \tau \right) + \nabla P = \rho f,\\
		&\partial_t \left( \rho \left( \frac{\left| u \right|^2}{2} + e \right) \right) + \dive \left[ u\left( \rho \left( \frac{\left| u \right|^2}{2} + e \right) + P \right) \right]= \dive \left( \tau \cdot u \right) - \dive q +\rho f \cdot u,
	\end{aligned}
	\right.
\end{equation}
which can be derived from the conservation of mass, linear momentum and energy. We refer the reader to \cite{LionsPL1} and references therein for more details. Here, the fluid is always supposed to be newtonian and $e=e\left( t,x \right)$ is the internal (thermal) energy per unit mass. The heat conduction $ q $ is given by $ q= -k \nabla\mathcal{T} $, where $ k $ is positive and $\mathcal{T} $ stands for the temperature. If $ e $ obeys Joule rule (\emph{i.e.} $ e $ is a function of $ \mathcal{T} $ only), the initial data is smooth and the initial density is bounded and bounded away from zero, the existence and uniqueness of a local classical solution has already been known for a long time (see \cite{Itaya} or \cite{Nash}). In \cite{Danchin01}, for $ p\in [1,\infty[$, R. Danchin proved that \eqref{compressible_heat_conductive_NS} admits a local (in time) solution in the critical scale-invariant space $ B^{\frac{N}{p}-1}_{p,1}\left( \R^N \right) $ if $p < N$ and the solution is unique if $p < 2N/3$.

\subsection{Main result and structure of the paper.}

The aim of this paper is to study the behavior of \textit{strong} solutions of the system \eqref{compressible_rotating_E} in the limit $\theta = \varepsilon\to 0 $ and in the case of ill-prepared initial data in the whole space $\R^3$, say
$$\rho_0^\eps = 1 + \eps b_0^\eps.$$ Let $\overline{\gamma} = (\gamma - 1)/2$. We consider the substitution  
$$
1+ \varepsilon b^\varepsilon = \frac{\left( 4\gamma A \right)^{1/2}}{\gamma-1} \left( \rho^\varepsilon \right)^{\overline{\gamma}}
$$ 
and \eqref{compressible_rotating_E} becomes (after a few algebraic calculations)
\begin{equation}\label{weakly_compressible_E}
	\left\lbrace
	\begin{aligned}
		&\partial_t \ub -\frac{1}{\varepsilon}\mathcal{ B} \ub +
		\begin{pmatrix}
			u^\varepsilon \cdot \nabla u^\varepsilon + \bgam \, b^\eps \nabla b^\eps\\
			u^\varepsilon \cdot \nabla b^\varepsilon + \bgam \, b^\varepsilon \dive u^\varepsilon
		\end{pmatrix} = 0,\\ 
		&\left. \left( u^\varepsilon, b^\varepsilon \right)\right|_{t=0}= \left( u_0^\eps, b_0^\eps \right),
	\end{aligned}
	\right.
\end{equation}
where $\B$ is the following operator
\begin{align}\label{definizione_B}
	\B = \left( \begin{array}{cccc}
		0&1&0& -\bgam\partial_1\\
		-1 & 0 & 0 & - \bgam\partial_2\\
		0&0&0 & - \bgam\partial_3\\
		- \bgam\partial_1 & -\bgam\partial_2 & -\bgam\partial_3 & 0
	\end{array} \right),
\end{align}
and where $\partial_i$, for any $i \in \set{1,2,3}$ stands for the derivative with respect to $x_i$ variable. Moreover we can write the nonlinearity as follows
\begin{equation}
	\label{eq:defA}
	\begin{pmatrix}
		u^\varepsilon \cdot \nabla u^\varepsilon + \bgam \ b^\eps \nabla b^\eps\\
		u^\varepsilon \cdot \nabla b^\varepsilon + \bgam \ b^\varepsilon \dive u^\varepsilon
	\end{pmatrix}
	= \mathcal{A} \left( U^\eps , D \right) U^\eps = 
	\begin{pmatrix}
		u^\varepsilon \cdot\nabla & 0 & 0 & \bgam b^\varepsilon \partial_1\\
		0 & u^\varepsilon\cdot\nabla & 0 & \bgam b^\varepsilon \partial_2\\
		0& 0 & u^\varepsilon\cdot\nabla  & \bgam b^\varepsilon \partial_3\\
		\bgam b^\varepsilon \partial_1 & \bgam b^\varepsilon \partial_2 & \bgam b^\varepsilon \partial_3 &  u^\varepsilon\cdot\nabla 
	\end{pmatrix} \ub,
\end{equation}
where $U^\eps$ stays for $ \ub $. With all the above considerations, the system \eqref{weakly_compressible_E} can be rewritten as 
\begin{equation}\label{eq:WCEshortsym}
\left\lbrace
\begin{aligned}
&\partial_t U^\eps -\frac{1}{\varepsilon}\mathcal{ B} U^\eps + \mathcal{A} (U^\eps,D) U^\eps = 0,\\ 
& \left. U^\eps\right|_{t=0}= U_0^\eps= \left( u_0^\eps, b_0^\eps \right).
\end{aligned}
\right.
\end{equation}

\begin{rem}
We would like to underline that, given a $ \2 $ vector field $ F $, we have
$$
\left(\left. \B F \right|  F \right)_\2= \left(\left. \widehat{\B F} \right|  \widehat{F} \right)_\2= 0.
$$
\end{rem}

In order to state our result, we recall the definitions of the functional spaces we will use in our paper. We use the index ``h'' to refer to the horizontal variable, and the index ``v'' or ``3'' to refer to the vertical one. Thus, $x_h = (x_1,x_2)$ and $\xi_h = (\xi_1,\xi_2)$. The anisotropic Lebesgue spaces $L_h^pL_v^q$ with $p, q \geq 1$ are defined as
\begin{align*}
	L_h^pL_v^q(\RR^3) &= L^p(\RR^2_h;L^q_v(\RR)) = \Big\{u\in \mathcal{S'}: \norm{u}_{L_h^pL_v^q} = \Big[ \int_{\RR^2_h} \Big\vert \int_{\RR_v}\abs{u(x_h,x_3)}^qdx_3 \Big\vert^{\frac{p}q}dx_h \Big]^{\frac{1}p} < +\infty\Big\}.
\end{align*}
Here, the order of integration is important. Indeed, if $1 \leq p \leq q$ and if $u: X_1\times X_2 \rightarrow \RR$ is a
function in $L^p(X_1;L^q(X_2))$, where $(X_1,d\mu_1)$, $(X_2,d\mu_2)$ are measurable spaces, then $u \in L^q(X_2;L^p(X_1))$ and
\begin{equation*}
	\norm{u}_{L^q(X_2;L^p(X_1))} \leq \norm{u}_{L^p(X_1;L^q(X_2))}.
\end{equation*}
We recall that the non-homogeneous Sobolev spaces $\Hs$, with $s \in \RR$, are defined as the closure of the set of rapidly decreasing smooth functions under the norm
\begin{equation*}
	\norm{u}_{H^s} \stackrel{\mbox{\tiny def}}{=} \pare{\int_{\RR^3} \pare{1 + \abs{\xi}^2}^s\abs{\widehat{u}(\xi)}^2 d\xi}^{\frac{1}2},
\end{equation*}
where $\widehat{u}$ is the usual Fourier transform of $u$. For any $s \geqslant 0$, $s_0>0$, $1 < p < 2$, we define the spaces 
\begin{equation}
	\label{eq:DefY} Y_{s,s_0,p} = H^{s+s_0}\left( \R^3 \right)^4 \cap L^2_hL^p_v\pare{\RR^3}^4 \cap L^p_hL^2_v\pare{\RR^3}^4,
\end{equation}
endowed with the norm
\begin{equation}
	\label{eq:normY} \norm{u}_{s,s_0,p} = \max\set{\norm{u}_{H^{s+s_0}}, \norm{u}_{L^2_hL^p_v}, \norm{u}_{L^p_hL^2_v}}.
\end{equation}
From now on, for any initial data $U_0^\eps \in Y_{s,s_0,p}$, we set 
\begin{equation}
	\label{eq:C0} \mathcal{C}(U_0^\eps) = \max\set{\norm{U_0^\eps}_{s,s_0,p}, \norm{U_0^\eps}_{s,s_0,p}^2}.
\end{equation}

The main result of this paper is the following theorem.
\begin{theorem}
	\label{th:mainth}
	Let $s>5/2$, $s_0>0$ be fixed constants, $1 < p < 2$. For any $\eps > 0$ and for any $U_0^\eps \in Y_{s,s_0,p}$ there exist a time $T^\star_\varepsilon > 0$ and a unique solution $U^\varepsilon = \left( u^\varepsilon, b^\varepsilon \right) $ of system \eqref{weakly_compressible_E} with initial data $U_0^\eps$, satisfying $U^\varepsilon \in C\pare{[0,T^\star_\varepsilon]; \Hs}$. Moreover, there exist positive constants $\;\overline{C} > 0$, $\alpha > 0$ and $\eps^\star > 0$ such that, for any $\eps \in ]0,\eps^\star]$,
	\begin{equation*}
		T^\star_\varepsilon \geqslant \frac{\overline{C}}{\mathcal{C}(U_0^\eps) \, \varepsilon^{\alpha}},
	\end{equation*}
	where  $\mathcal{C}(U_0^\eps)$ is defined in \eqref{eq:C0}.	
\end{theorem}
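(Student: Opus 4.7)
The plan is to combine a classical $H^s$ energy estimate for the symmetric hyperbolic system \eqref{eq:WCEshortsym} with anisotropic Strichartz-type dispersive estimates for the singular linear semigroup $e^{t\mathcal{B}/\varepsilon}$, and to close a bootstrap argument that propagates regularity on an interval of length $\varepsilon^{-\alpha}$.

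First I would establish local well-posedness on a time interval whose length is independent of $\varepsilon$. The matrix operator $\mathcal{A}(U,D)$ in \eqref{eq:defA} is symmetric in $D$, and by the Remark preceding the definition of $Y_{s,s_0,p}$ the singular skew-symmetric term $\varepsilon^{-1}\mathcal{B}U^\varepsilon$ vanishes in the $L^2$ inner product, so $\varepsilon$-uniformity is automatic. Applying $(I-\Delta)^{s/2}$ to \eqref{eq:WCEshortsym}, using standard Kato-Ponce commutator estimates on the convective part $u^\varepsilon\cdot\nabla$ together with product estimates for $\bgam\, b^\varepsilon\nabla b^\varepsilon$ and $\bgam\, b^\varepsilon\dive u^\varepsilon$, and invoking the Sobolev embedding $H^{s-1}\hookrightarrow L^\infty(\mathbb{R}^3)$ valid for $s>5/2$, I obtain an inequality of the form
\[
\frac{d}{dt}\|U^\varepsilon\|_{H^s}^2\lesssim \|\nabla U^\varepsilon\|_{L^\infty}\,\|U^\varepsilon\|_{H^s}^2.
\]
Local existence on $[0,T_0^\varepsilon]$ with $T_0^\varepsilon\gtrsim 1/\|U_0^\varepsilon\|_{H^s}$, as well as uniqueness in $C([0,T_0^\varepsilon];H^s)$, then follow from a Friedrichs scheme (or from a vanishing viscosity approximation) in the standard way.

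The core of the argument is the derivation of dispersive estimates for the linear system $\partial_t V=\varepsilon^{-1}\mathcal{B}V$. Diagonalising $\mathcal{B}$ on the Fourier side produces a kernel direction together with oscillating eigenvalues $\pm i\lambda_j(\xi)/\varepsilon$ combining the Coriolis frequency $\xi_3/|\xi|$ with the acoustic speed $\bgam|\xi|$; a stationary-phase analysis performed after Littlewood-Paley localisation yields an $L^1_x\to L^\infty_x$ decay of the form $(\varepsilon/t)^\kappa$ for some $\kappa>0$, valid away from the degeneracy loci $\xi_h=0$ and $\xi_3=0$. A $TT^\ast$ duality argument then upgrades this into anisotropic Strichartz estimates of the type
\[
\|e^{t\mathcal{B}/\varepsilon}V_0\|_{L^r([0,T];\,L^{q_1}_hL^{q_2}_v)}\lesssim \varepsilon^{\delta}\,\|V_0\|_{Y_{s,s_0,p}}
\]
for appropriate admissible exponents, with $\delta=\delta(p,s_0)>0$. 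The exponent $p\in(1,2)$ and the extra regularity $s_0>0$ encoded in the definition \eqref{eq:DefY} of $Y_{s,s_0,p}$ are exactly what allow one to absorb the losses coming from the degeneracy regions while preserving a strictly positive power of $\varepsilon$.

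With these estimates in hand I would close a bootstrap. Writing Duhamel's formula
\[
U^\varepsilon(t)=e^{t\mathcal{B}/\varepsilon}U_0^\varepsilon-\int_0^t e^{(t-\tau)\mathcal{B}/\varepsilon}\,\mathcal{A}(U^\varepsilon,D)U^\varepsilon\,d\tau,
\]
applying the Strichartz estimate to both terms, and using the continuous embedding of the Strichartz norm into $L^1([0,T];W^{1,\infty}(\mathbb{R}^3))$ at the price of spending a fraction of the regularity reserve $s_0$, I obtain a bound of the form
\[
\int_0^T\|\nabla U^\varepsilon\|_{L^\infty}\,d\tau\leq C\,\varepsilon^{\delta}\,\mathcal{C}(U_0^\varepsilon)\,(1+T)^\beta\bigl(1+\sup_{[0,T]}\|U^\varepsilon\|_{H^{s+s_0}}\bigr)^\beta.
\]
Plugging this into the exponential form of the energy inequality and imposing the bootstrap threshold $\sup_{[0,T]}\|U^\varepsilon\|_{H^{s+s_0}}\leq 2\|U_0^\varepsilon\|_{H^{s+s_0}}$ on an interval $T\leq\overline{C}/(\mathcal{C}(U_0^\varepsilon)\,\varepsilon^\alpha)$, the gain $\varepsilon^\delta$ beats the polynomial growth in $T$ as soon as $\alpha$ is chosen strictly smaller than $\delta/\beta$, and the bootstrap closes. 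The main obstacle is the dispersive step: because $\mathcal{B}$ couples the Coriolis structure with acoustic propagation, the phases $\lambda_j(\xi)$ degenerate both at $\xi_h=0$ (the Taylor-Proudman direction) and at $\xi_3=0$ (the pure acoustic direction), so proving sharp non-degeneracy of $\lambda_j$ away from these loci and carefully quantifying the loss close to them, in such a way that $\delta$ stays strictly positive and independent of $\varepsilon$, is the delicate analytic point on which the whole long-time result rests.
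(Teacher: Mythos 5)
There is a genuine gap at the centre of your argument: the claimed estimate $\norm{e^{t\mathcal{B}/\varepsilon}V_0}_{L^r([0,T];L^{q_1}_hL^{q_2}_v)}\lesssim \varepsilon^{\delta}\norm{V_0}_{Y_{s,s_0,p}}$ cannot hold for arbitrary $V_0\in Y_{s,s_0,p}$. The characteristic polynomial $\lambda^4+(1+\bgam^2\abs{\xi}^2)\lambda^2+\bgam^2\xi_3^2$ shows that $\widehat{\B}(\xi)$ has a double zero eigenvalue on the plane $\xi_3=0$; data whose Fourier transform is concentrated near that plane (or near $\xi_h=0$, or at unboundedly high frequencies) is essentially transported by the identity, so its space--time norm grows like $T^{1/r}$ with no power of $\varepsilon$, while such data can perfectly well have bounded $Y_{s,s_0,p}$-norm. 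The role of the space $Y_{s,s_0,p}$ in the paper is therefore \emph{not} to absorb the degeneracy into a dispersive estimate: Lemma \ref{le:smalll_data_Utilde} uses Hausdorff--Young in the anisotropic $L^2_hL^p_v$ and $L^p_hL^2_v$ norms to show that the portion of the data supported in the degenerate regions and at high frequency is \emph{small in $H^s$}, of size $R^{-s_0}$ once $r=R^{-\delta}$. This forces the decomposition $U^\varepsilon=\bU+\tU$, where only the free evolution of the medium-frequency part $\Psi_{r,R}(D)U_0^\eps$ enjoys a Strichartz estimate (Theorem \ref{th:Strichartz}, with constants $R^{3/2-3/q}r^{-4/p}\varepsilon^{1/p}$ that must then be balanced by choosing $R=\eps^{-\beta}$), while the remainder is handled as a small-data nonlinear problem by energy methods. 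That decomposition is the missing idea in your write-up; without it the term $\int_0^T\norm{\nabla U^\eps}_{L^\infty}\,d\tau$ receives a contribution of order $T$ from the non-dispersive modes and the bootstrap does not close.

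A second, related problem is your application of the Strichartz estimate to the Duhamel integral $\int_0^te^{(t-\tau)\mathcal{B}/\varepsilon}\mathcal{A}(U^\eps,D)U^\eps\,d\tau$. The nonlinearity is not frequency-localized in $\Crr$, so the only dispersive estimate available (which requires the cut-off $\Psi_{r,R}(D)$ and the admissibility constraint $p\geqslant 4q/(q-2)$, hence no $L^1_t L^q_x$ bound) does not apply to it, and no inhomogeneous Strichartz estimate is established for this operator. The paper avoids this entirely: the nonlinear part stays in the energy framework ($\widetilde{L}^\infty H^s$ estimates via paraproduct decompositions, Lemmas \ref{le:biproduct}--\ref{le:biprod_cut1}), and the Strichartz gain enters only through the factors $\bU$ appearing inside the trilinear terms, estimated in $L^4_t L^\infty_x$ and converted to $L^1_t$ by H\"older at the cost of $t^{3/4}$. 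Your overall philosophy (energy plus dispersion plus bootstrap, with the degeneracy loci $\xi_h=0$ and $\xi_3=0$ correctly identified as the delicate point) is the right one, but as written both the statement of the key dispersive estimate and the way it is fed into the bootstrap would fail.
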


\begin{rem}
	\mbox{}
	\begin{enumerate}
		\item The estimate of the lifespan $T^\star_\varepsilon$ of $U^\eps$ is much better than in \cite{Dutrifoy2} (for incompressible fast rotating fluids). The reason is that we only consider 3D initial data of finite energy in $\RR^3$. As a consequence, the limit system is zero, since the only vector field of finite energy in $\RR^3$ which belongs to the kernel of the penalized operator $\mathcal{\B}$ is zero. In the more general case where the initial data is the sum of a 2D part (which belongs to the kernel of the penalization operator $\mathcal{B}$) and a 3D part (of finite energy in $\RR^3$), the limit system is not zero but some 2D nonlinear hyperbolic system. Thus, in the case of general data, we can only hope for a similar lifespan as in \cite{Dutrifoy2}. This general case will be dealt with in a forthcoming paper.
		
		\bigskip		
	
		\item For fixed $\eps > 0$ and for small $U_0^\eps$, the lifespan is inversely proportional to the $Y_{s,s_0,p}$-norm of $U_0^\eps$, which is expected for this type of hyperbolic system with small initial data.

		\bigskip

		\item \label{re:13blowupdata} The initial data can be chosen not only to be large but to blow up as $\eps \to 0$. Indeed, for data $U_0^\eps \sim \eps^{-\omega}$, with $0 < \omega < \frac{\alpha}2$, the maximal lifetime of the solution still goes to $\infty$ as
		$$ T^\star_\varepsilon \gtrsim \eps^{-(\alpha - 2\omega)} \rightarrow \infty.$$
	\end{enumerate}
\end{rem}

\bigskip

Throughout this paper, we set 
\begin{equation}
	\label{eq:CrR}
	\Crr= \set{\xi\in\R^3_\xi \;\big\vert\; \abs{\xi} \leqslant R, \abs{\xi_h} \geqslant r, \abs{\xi_3} \geqslant r}. 
\end{equation}
Our strategy to study the system \eqref{weakly_compressible_E} consists in finding a solution to \eqref{weakly_compressible_E} of the form 
$$U^\varepsilon= \left( u^\varepsilon, b^\varepsilon \right)= \bU+ \tU$$ 
where $ \bU= \left( \overline{u}^\varepsilon, \overline{b}^\varepsilon \right) $ and $ \tU= \left( \widetilde{u}^\varepsilon, \widetilde{b}^\varepsilon \right) $ are  respectively solutions to the following systems
\begin{align*}
	&\left\lbrace
	\begin{aligned}
		&\partial_t \bU - \frac{1}{\varepsilon}\B \bU = 0\\
		&\left. \bU \right|_{t=0}= \Psi_{r,R} \left( D \right) \left( u_0^\eps, b_0^\eps \right)
	\end{aligned}\right. ,
	& \left\lbrace
	\begin{aligned}
		&\partial_t \tU - \frac{1}{\varepsilon}\B \tU + \mathcal{A} (U,D) U = 0\\
		&\left. \tU \right|_{t=0}= \left(1- \Psi_{r,R} \left( D \right) \right) \left( u_0^\eps, b_0^\eps \right)
	\end{aligned}
	\right. .
\end{align*}
Here, the frequency cut-off radii $0 < r < R$ will be precisely chosen, depending on $\varepsilon$ and $\Psi_{r,R}$ is a radial function supported in $\mathcal{C}_{\frac{r}2, 2R}$ and is identically equal to 1 in $\Crr$. The precise definition of $\Psi_{r,R}$ will be given in \eqref{eq:PsirR} in Section \ref{se:Data}. We will also prove in Section \ref{se:Data} that, if $R$ is sufficiently large, the system describing $\tU$ can be considered as a 3D hydrodynamical system with small initial data, which is known to be globally well posed in critical spaces (see \cite{CMP}, \cite{Danchin00}, \cite{FK}, \cite{KochTatNS}). For the linear part $\bU$ which describes the evolution of 3D free waves, we will prove that it goes to zero in some appropriate topology using similar Strichartz-type estimates as in \cite{CDGG}, \cite{CDGG2}, \cite{Dutrifoy2} or \cite{VSN}. We want to emphasize that, unlike the RAGE theorem used in \cite{FGN}, Strichartz estimates give very precise quantitative estimates of the rate of decay to zero of $\bU$, as $\eps \to 0$.

This paper will be organized as follows. In Section \ref{se:dyadic} we introduce the notation and a detailed description of the critical spaces that we are going to use all along the work. Moreover, we introduce some elements of the Littlewood-Paley and the paradifferential calculus, which are primordial for the study of critical behavior of nonlinearities. In Section \ref{se:Data}, we study a specific decomposition of the initial data in two parts, one only containing medium Fourier frequencies and the other very high or very low frequencies and we provide a precise control of the latter. Section \ref{se:Strichartz} is devoted to the study of the cut-off  linear free-wave system associate to \eqref{weakly_compressible_E}. Using the spectral properties of the operator $ \B $ defined in \eqref{definizione_B}, we prove some Strichartz-type estimates for this system, which show that its solutions vanish in some appropriate $L^p\pare{\R_+; L^q\pare{\R^3}}$ spaces as $ \varepsilon\to 0 $. The nonlinear problem is finally dealt with in Section \ref{section:HFsystem}, where, combining with the results of Section \ref{se:Strichartz}, we prove an existence result for the system \eqref{weakly_compressible_E}. Finally, we prove the lower bound estimate of the maximal lifespan $T^\star_\eps$ given in Theorem \ref{th:mainth}.

\section{Premilinaries} \label{se:dyadic}

The aim of this section is to briefly recall some elements of the Littlewood-Paley theory, which is the main technique used all along the paper. 

\subsection{Dyadic decomposition} We recall that in $\RR^d$, with $d\in\NN^*$, for $R > 0$, the ball $\mathcal{B}_d(0,R)$ is the set $$\mathcal{B}_d(0,R) = \set{\xi \in \RR^d \;:\; \abs{\xi} \leq R}.$$ For $0 < r_1 < r_2$, we defined the annulus
$$\mathcal{A}_d(r_1,r_2) \stackrel{\text{\tiny def}}{=} \set{\xi \in \RR^d \;:\; r_1 \leq \abs{\xi} \leq r_2}.$$ Next, we recall the following Bernstein-type lemma, which states that derivatives act almost as homotheties on distributions whose Fourier transforms are supported in a ball or an annulus. We refer the reader to \cite[Lemma 2.1.1]{C_book95} or \cite[Lemma 2.1]{BCDbook11} for a proof of this lemma.
\begin{lemma}
	\label{lemma:Bernstein}
	Let $k\in\NN$, $d \in \NN^*$ and $R, r_1, r_2 \in \RR$ satisfy $0 < r_1 < r_2$ and $R > 0$. There exists a constant $C > 0$ such that, for any $a, b \in \RR$, $1 \leq a \leq b \leq +\infty$, for any $\lambda > 0$ and for any $u \in L^a(\RR^d)$, we have
    \begin{equation}
        \label{eq:Bernstein1}
		\mbox{supp\,}\pare{\widehat{u}} \subset \mathcal{B}_d(0,\lambda R) \quad \Longrightarrow \quad \sup_{\abs{\alpha} = k} \norm{\dd^\alpha u}_{L^b} \leq C^k\lambda^{k+ d \pare{\frac{1}a-\frac{1}b}} \norm{u}_{L^a},
    \end{equation}
    \begin{equation}
        \label{eq:Bernstein2}
		\mbox{supp\,}\pare{\widehat{u}} \subset \mathcal{A}_d(\lambda r_1,\lambda r_2) \quad \Longrightarrow \quad C^{-k} \lambda^k\norm{u}_{L^a} \leq \sup_{\abs{\alpha} = k} \norm{\dd^\alpha u}_{L^a} \leq C^k \lambda^k\norm{u}_{L^a}.
    \end{equation}
\end{lemma}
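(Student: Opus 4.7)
\medskip

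\noindent\textbf{Proof plan.} The plan is a classical convolution argument: in Fourier space the assumption means $\widehat{u}$ coincides with $\widehat{u}$ times a smooth cutoff, and on the physical side this turns $\partial^\alpha u$ into a convolution with a Schwartz function whose $L^c$-norm is explicit under dilations. I would separate the two estimates and, inside the second, split the upper bound from the lower bound.

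\textbf{Step 1: setting up the cutoffs.} Fix once and for all two smooth radial functions $\phi,\,\psi\in\mathcal{C}^\infty_c(\R^d)$ depending only on $d$, $R$, $r_1$, $r_2$, such that $\phi\equiv 1$ on $\mathcal{B}_d(0,R)$ and $\mathrm{supp}\,\phi\subset\mathcal{B}_d(0,2R)$, while $\psi\equiv 1$ on $\mathcal{A}_d(r_1,r_2)$ with $\mathrm{supp}\,\psi\subset\mathcal{A}_d(r_1/2,2r_2)$. By the support hypothesis we have
\begin{equation*}
\widehat{u}(\xi)=\phi\!\left(\tfrac{\xi}{\lambda}\right)\widehat{u}(\xi)\qquad\text{or}\qquad \widehat{u}(\xi)=\psi\!\left(\tfrac{\xi}{\lambda}\right)\widehat{u}(\xi),
\end{equation*}
respectively. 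Set $g=\mathcal{F}^{-1}\phi$ and $h=\mathcal{F}^{-1}\psi$; both belong to $\mathcal{S}(\R^d)$, so all their Sobolev and Lebesgue seminorms are finite.

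\textbf{Step 2: upper bound \eqref{eq:Bernstein1}.} From $\widehat{u}=\phi(\cdot/\lambda)\widehat{u}$ and the scaling formula $\mathcal{F}^{-1}[\phi(\cdot/\lambda)](x)=\lambda^d g(\lambda x)$, differentiation under the convolution gives
\begin{equation*}
\partial^\alpha u(x)=\lambda^{d+|\alpha|}\,(\partial^\alpha g)(\lambda\cdot)\ast u\,(x).
\end{equation*}
Pick $c\in[1,\infty]$ with $1+\tfrac1b=\tfrac1c+\tfrac1a$, which is admissible because $a\le b$. Young's convolution inequality together with the dilation identity $\|f(\lambda\cdot)\|_{L^c}=\lambda^{-d/c}\|f\|_{L^c}$ yields
\begin{equation*}
\|\partial^\alpha u\|_{L^b}\;\le\;\lambda^{|\alpha|+d(1/a-1/b)}\,\|\partial^\alpha g\|_{L^c}\,\|u\|_{L^a}.
\end{equation*}
Taking the supremum over $|\alpha|=k$ and absorbing $\sup_{|\alpha|=k}\|\partial^\alpha g\|_{L^c}\le C^k$ into the exponential constant (using that $g\in\mathcal{S}$) gives \eqref{eq:Bernstein1}.

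\textbf{Step 3: upper bound in \eqref{eq:Bernstein2}.} The same computation with $\psi$ in place of $\phi$ and $a=b$ (so $c=1$) gives $\|\partial^\alpha u\|_{L^a}\le C^{|\alpha|}\lambda^{|\alpha|}\|u\|_{L^a}$, which is the right-hand inequality.

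\textbf{Step 4: lower bound in \eqref{eq:Bernstein2}.} This is the step I expect to be the delicate one, since one has to \emph{invert} derivatives. I would first handle $k=1$, then iterate. Because $\mathrm{supp}\,\psi$ is compact and bounded away from $0$, the functions
\begin{equation*}
\psi_j(\eta)\stackrel{\mathrm{def}}{=}\frac{\eta_j\,\psi(\eta)}{|\eta|^2}\in\mathcal{C}^\infty_c(\R^d),\qquad j=1,\dots,d,
\end{equation*}
are well defined and smooth, and on the support of $\psi(\cdot/\lambda)$ they satisfy the identity
\begin{equation*}
\psi\!\left(\tfrac{\xi}{\lambda}\right)=\frac1\lambda\sum_{j=1}^{d}\psi_j\!\left(\tfrac{\xi}{\lambda}\right)\xi_j.
\end{equation*}
Multiplying by $\widehat{u}(\xi)$ and using $\xi_j\widehat{u}=-i\,\widehat{\partial_j u}$ gives
\begin{equation*}
\widehat{u}(\xi)=-\frac{i}{\lambda}\sum_{j=1}^{d}\psi_j\!\left(\tfrac{\xi}{\lambda}\right)\widehat{\partial_j u}(\xi),
\end{equation*}
hence $u=-\tfrac{i}{\lambda}\sum_j\mathcal{F}^{-1}[\psi_j(\cdot/\lambda)]\ast\partial_j u$. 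Since the $L^1$ norm of $\mathcal{F}^{-1}[\psi_j(\cdot/\lambda)]$ is independent of $\lambda$, Young's inequality with $c=1$ gives $\lambda\|u\|_{L^a}\le C\,\sup_j\|\partial_j u\|_{L^a}$, settling $k=1$.

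\textbf{Step 5: iteration.} For general $k$, note that if $\widehat{u}$ is supported in $\mathcal{A}_d(\lambda r_1,\lambda r_2)$ then so is $\widehat{\partial^\beta u}$ for every multi-index $\beta$, because multiplication by $(i\xi)^\beta$ does not enlarge the Fourier support. Iterating the Step 4 estimate $k$ times produces
\begin{equation*}
\lambda^k\|u\|_{L^a}\;\le\;C^k\sup_{|\alpha|=k}\|\partial^\alpha u\|_{L^a},
\end{equation*}
which combined with Step 3 closes \eqref{eq:Bernstein2}.

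The main conceptual obstacle is Step 4: writing $u$ as a sum of convolutions of its first-order derivatives requires exploiting the fact that the Fourier support is \emph{bounded away from the origin}, which is exactly what distinguishes the annulus case from the ball case and produces the two-sided estimate. All other steps are routine consequences of Young's inequality and Schwartz-class bounds.
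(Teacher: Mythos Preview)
The paper does not prove this lemma at all; it simply refers to \cite[Lemma~2.1.1]{C_book95} and \cite[Lemma~2.1]{BCDbook11}. Your proof is correct and is precisely the classical convolution-and-Young argument given in those references, so there is nothing to compare.
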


In order to define the dyadic partition of unity, we also recall the following proposition, the proof of which can be found in \cite[Proposition 2.1.1]{C_book95} or \cite[Proposition 2.10]{BCDbook11}. 
\begin{prop}
	\label{pr:dyadic} Let $d \in \NN^*$. There exist smooth radial function $\chi$ and $\varphi$ from $\RR^d$ to $[0,1]$, such that
	\begin{gather}
		\label{eq:dyadic01} \mbox{supp\,}\chi \in \mathcal{B}_d\pare{0,\frac{4}3}, \quad \mbox{supp\,}\varphi \in \mathcal{A}_d\pare{\frac{3}4,\frac{8}3},\\
		\label{eq:dyadic02} \forall\, \xi \in \RR^3, \quad \chi(\xi) + \sum_{j\geqslant 0} \varphi(2^{-j}\xi) = 1,\\
		\label{eq:dyadic03} \abs{j-j'} \geqslant 2 \quad \Longrightarrow \quad \mbox{supp\,}\varphi(2^{-j}\cdot) \cap \mbox{supp\,}\varphi(2^{-j'}\cdot) = \varnothing,\\
		\label{eq:dyadic04} j \geqslant 1 \quad \Longrightarrow \quad \mbox{supp\,}\chi \cap \mbox{supp\,}\varphi(2^{-j}\cdot) = \varnothing.
	\end{gather}
	Moreover, for any $\xi \in \RR^d$, we have
	\begin{equation}
		\label{eq:dyadic05} \frac{1}2 \leqslant \chi^2(\xi) + \sum_{j\geqslant 0} \varphi^2(2^{-j}\xi) \leqslant 1.
	\end{equation}
\end{prop}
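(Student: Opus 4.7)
The plan is to construct $\chi$ first as a single smooth radial plateau, and then to \emph{define} $\varphi$ as a dyadic difference of $\chi$, so that all five properties reduce to elementary verifications. By standard mollification of the indicator of an intermediate ball, I can produce a radial, $\mathcal{C}^\infty$, nonincreasing in $|\xi|$ function $\chi: \RR^d \to [0,1]$ such that $\chi(\xi) = 1$ for $|\xi| \leq 3/4$ and $\chi(\xi) = 0$ for $|\xi| \geq 4/3$, which gives $\mathrm{supp}\,\chi \subset \mathcal{B}_d(0,4/3)$ in \eqref{eq:dyadic01}. Then set
\[
\varphi(\xi) := \chi(\xi/2) - \chi(\xi).
\]

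A direct inspection of the two scaled plateaus shows that $\varphi(\xi) = 0$ whenever $|\xi| \leq 3/4$ (both terms equal $1$) or $|\xi| \geq 8/3$ (both vanish), hence $\mathrm{supp}\,\varphi \subset \mathcal{A}_d(3/4,8/3)$, completing \eqref{eq:dyadic01}. For \eqref{eq:dyadic03}, the support of $\varphi(2^{-j}\cdot)$ lies in the annulus $\{3\cdot 2^{j-2} \leq |\xi| \leq 2^{j+3}/3\}$, and a direct comparison of the outer radius at scale $j$ with the inner radius at scale $j'=j+2$ yields $2^{j+3}/3 < 3\cdot 2^{j'-2}$, giving disjointness. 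For \eqref{eq:dyadic04}, when $j \geq 1$ the inner radius is $3\cdot 2^{j-2}\geq 3/2 > 4/3$, which clears $\mathrm{supp}\,\chi$.

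The partition of unity \eqref{eq:dyadic02} follows from a telescoping identity: for every integer $J\geq 0$,
\[
\chi(\xi) + \sum_{j=0}^{J} \varphi(2^{-j}\xi) \;=\; \chi(\xi) + \sum_{j=0}^{J}\bigl[\chi(2^{-j-1}\xi) - \chi(2^{-j}\xi)\bigr] \;=\; \chi(2^{-J-1}\xi).
\]
For each fixed $\xi$, once $J$ is large enough to have $|2^{-J-1}\xi| \leq 3/4$, the right-hand side is exactly $1$; since the sum is pointwise finite by \eqref{eq:dyadic03}, this proves \eqref{eq:dyadic02}.

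The step I expect to be the most delicate is the two-sided bound \eqref{eq:dyadic05}, since it is the only one that uses the \emph{squares} of $\chi$ and $\varphi$. The key observation is that by \eqref{eq:dyadic03} and \eqref{eq:dyadic04}, at every $\xi$ at most two members of the family $\{\chi,\varphi(2^{-j}\cdot)\}_{j\geq 0}$ are nonzero, and those two values $a,b \in [0,1]$ satisfy $a+b = 1$ by \eqref{eq:dyadic02}. The upper bound is then $a^2 + b^2 \leq a+b = 1$ (because $a,b \in [0,1]$), and the lower bound is the elementary convexity inequality $a^2+b^2 \geq \tfrac12(a+b)^2 = \tfrac12$, with equality at $a=b=1/2$. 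The only subtlety to keep in mind throughout the construction is to ensure that the mollified profile $\chi$ remains valued in $[0,1]$, which is guaranteed by using a nonnegative radial mollifier applied to a $[0,1]$-valued monotone plateau.
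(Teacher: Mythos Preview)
Your construction is correct and is precisely the standard one: the paper does not give its own proof of this proposition but merely cites \cite[Proposition~2.1.1]{C_book95} and \cite[Proposition~2.10]{BCDbook11}, where the same telescoping definition $\varphi(\xi)=\chi(\xi/2)-\chi(\xi)$ from a radial nonincreasing plateau $\chi$ is used. Your verification of \eqref{eq:dyadic05} via the observation that at most two terms of the family are nonzero at each point, combined with $a^2+b^2\in[\tfrac12(a+b)^2,\,a+b]$ for $a,b\in[0,1]$ with $a+b=1$, is exactly the argument in those references.
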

The dyadic blocks are defined as follows
\begin{defn}
	\label{de:dyadic} For any $d\in\NN^*$ and for any tempered distribution $u \in \mathcal{S}'(\RR^d)$, we set
    \begin{align*}
		&\tq u = \mathcal{F}^{-1} \pare{\varphi(2^{-q}\abs{\xi}) \widehat{u}(\xi)}, &&\forall q \in \NN,\\
		&\Delta_{-1} u = \mathcal{F}^{-1} \pare{\psi(\abs{\xi}) \widehat{u}(\xi)},&&\\
		&\tq u = 0, &&\forall q \leq -2,\\
		&S_q u = \sum_{q' \leq q - 1} \Delta_{q'} u, &&\forall q\geq 1,
    \end{align*}
    where $\mathcal{F}$ and $\mathcal{F}^{-1}$ stand for the Fourier transform and the inverse Fourier transform respectively.
\end{defn}
\noindent Using the properties of $\psi$ and $\varphi$, for any tempered distribution $u \in \mathcal{S}'(\RR^d)$, one can write
$$u = \sum_{q\geq -1} \tq u \qquad\mbox{in},$$ and the non-homogeneous Sobolev spaces $H^s(\RR^d)$, with $s\in\RR$, can be characterized as follows
\begin{prop}
    \label{pr:Sobnormiso}
    Let $d\in\NN^*$, $s\in\RR$ and $u\in H^s(\RR^d)$. Then,
    \begin{equation*}
        \norm{u}_{H^s} := \pare{\int_{\RR^d} (1 + \abs{\xi}^2)^s \abs{\widehat{u}(\xi)}^2 d\xi}^{\frac{1}2} \sim \pare{\sum_{q\geq -1} 2^{2qs} \norm{\tq u}_{L^2}^2}^{\frac{1}2}
    \end{equation*}
    Moreover, there exists a square-summable sequence of positive numbers $\{c_q(u)\}_q$ with $\sum_q c_q(u)^2 = 1$, such that 
	\begin{equation}
		\label{eq:DeltaqHs} \norm{\tq u}_{L^2} \leq c_q(u) 2^{-qs} \norm{u}_{H^s}.
	\end{equation}
\end{prop}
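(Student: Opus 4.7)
The plan is to pass everything to the Fourier side via Plancherel's theorem and exploit the frequency localization of each dyadic block together with the partition of unity identities \eqref{eq:dyadic02}--\eqref{eq:dyadic05}. Writing $\widehat{\Delta_q u}(\xi) = \varphi(2^{-q}|\xi|)\widehat{u}(\xi)$ for $q \geq 0$ and $\widehat{\Delta_{-1} u}(\xi) = \chi(|\xi|)\widehat{u}(\xi)$, the spectral support properties \eqref{eq:dyadic01} give that on $\operatorname{supp}\varphi(2^{-q}\cdot)$ one has $|\xi| \sim 2^q$, hence $(1+|\xi|^2)^s \sim 2^{2qs}$, while on $\operatorname{supp}\chi$ one has $(1+|\xi|^2)^s \sim 1$. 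These two-sided equivalences, combined with the almost-disjointness \eqref{eq:dyadic03} (so that only a bounded number of blocks $\Delta_q u$ have frequencies at any given $\xi$), are the only inputs needed.

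For the upper bound $\sum_{q\geq -1} 2^{2qs}\|\Delta_q u\|_{L^2}^2 \lesssim \|u\|_{H^s}^2$, I would use Plancherel together with the right-hand inequality in \eqref{eq:dyadic05}:
\begin{equation*}
\sum_{q\geq -1} 2^{2qs}\|\Delta_q u\|_{L^2}^2 \lesssim \int_{\R^d} (1+|\xi|^2)^s\Bigl[\chi^2(|\xi|) + \sum_{q\geq 0}\varphi^2(2^{-q}|\xi|)\Bigr] |\widehat{u}(\xi)|^2\, d\xi \leq \|u\|_{H^s}^2.
\end{equation*}
For the matching lower bound I would argue in the same way using the left-hand inequality $\chi^2 + \sum \varphi^2(2^{-q}\cdot) \geq 1/2$ in \eqref{eq:dyadic05}, noting that \eqref{eq:dyadic03} ensures the sum in $q$ can be exchanged with the integral without loss. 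This yields the desired norm equivalence.

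For the second claim, I would define the normalized sequence
\begin{equation*}
c_q(u) := \frac{2^{qs}\|\Delta_q u\|_{L^2}}{\Bigl(\sum_{q'\geq -1} 2^{2q's}\|\Delta_{q'}u\|_{L^2}^2\Bigr)^{1/2}},
\end{equation*}
which by construction satisfies $c_q(u) \geq 0$ and $\sum_q c_q(u)^2 = 1$. The identity $\|\Delta_q u\|_{L^2} = 2^{-qs} c_q(u)\bigl(\sum_{q'} 2^{2q's}\|\Delta_{q'}u\|_{L^2}^2\bigr)^{1/2}$ together with the upper bound already established then gives \eqref{eq:DeltaqHs}, modulo an absolute constant that can be absorbed into a harmless rescaling of $c_q$. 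There is no serious obstacle here — the proof is a direct application of Plancherel's identity and the structural properties of the dyadic partition of unity from Proposition~\ref{pr:dyadic}; the only minor point to keep track of is the separate treatment of the low-frequency block $\Delta_{-1}u$, whose weight $2^{-2s}$ is comparable to $1$ and hence contributes harmlessly.
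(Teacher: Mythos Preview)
Your proposal is correct and is the standard argument. The paper does not actually supply a proof of this proposition; it merely records the Littlewood-Paley characterization of $H^s$ as a known fact, so there is nothing to compare against beyond noting that your approach is exactly the classical one (Plancherel plus the support/partition-of-unity properties \eqref{eq:dyadic01}--\eqref{eq:dyadic05}).
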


\subsection{Paradifferential calculus.} \label{paradiffcalcul}

The decomposition into dyadic blocks allows, at least formally, to write, for any tempered distributions $u$ and $v$,
\begin{align}
	uv = & \sum_{\substack{q\in\mathbb{Z} \\ q'\in\mathbb{Z}}}\tq u\, \Delta_{q'} v
\end{align}
The Bony decomposition (see for instance \cite{BCDbook11}, \cite{Bony81} or \cite{C_book95} for more details) consists in splitting the above sum in three parts. The first corresponds to the low frequencies of $u$ multiplied by the high frequencies of $v$, the second is the symmetric counterpart of the first, and the third part concerns the indices $q$ and $q'$ which are comparable. Then,
\begin{equation*}
	uv = T_u v+ T_v u + R\left(u,v\right),
\end{equation*}
where
\begin{align*}
	T_u v=& \sum_q S_{q-1} u \tq v\\
	R\left( u,v \right) = & \sum_{\abs{q-q'} \leqslant 1} \Delta_q u \Delta_{q'} v.
\end{align*}
Using the quasi-orthogonality given in \eqref{eq:dyadic03} and \eqref{eq:dyadic04}, we get the following relations.
\begin{lemma}
	\label{le:orthogonal}
	For any tempered distributions $u$ and $v$, we have
	\begin{align*}
		&\tq \pare{S_{q'-1} u \Delta_{q'} v} = 0 && \text{if } \left|q-q'\right|\geqslant 5\\
		&\tq \pare{S_{q'+1} u \Delta_{q'} v} = 0 && \text{if } q'\leqslant q-4.
	\end{align*}
\end{lemma}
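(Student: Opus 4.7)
The plan is to use only the spectral localization properties of the Littlewood--Paley blocks together with the fact that the Fourier transform of a product is the convolution of the Fourier transforms, so that $\mathrm{supp}\,\widehat{fg} \subset \mathrm{supp}\,\widehat{f} + \mathrm{supp}\,\widehat{g}$ (Minkowski sum). Both statements reduce to checking that this Minkowski sum is disjoint from $\mathrm{supp}\,\varphi(2^{-q}\cdot)$, i.e.\ from the annulus $\mathcal{A}_3\bigl(\tfrac{3}{4}2^q,\tfrac{8}{3}2^q\bigr)$.

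For the first identity, I would start by observing that, by Definition \ref{de:dyadic} together with \eqref{eq:dyadic01}--\eqref{eq:dyadic02}, $S_{q'-1}u$ has Fourier support contained in the ball $\mathcal{B}_3\bigl(0,\tfrac{2}{3}2^{q'}\bigr)$ (this is exactly the set where $\chi + \sum_{k\le q'-2}\varphi(2^{-k}\cdot)$ does not vanish, since the largest block involved has $k=q'-2$ and $\varphi$ is supported in $[3/4,8/3]$). On the other hand $\Delta_{q'}v$ has Fourier support in $\mathcal{A}_3\bigl(\tfrac{3}{4}2^{q'},\tfrac{8}{3}2^{q'}\bigr)$. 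Consequently
\[
\mathrm{supp}\,\widehat{S_{q'-1}u\,\Delta_{q'}v}\;\subset\;\mathcal{A}_3\!\left(\tfrac{1}{12}2^{q'},\tfrac{10}{3}2^{q'}\right).
\]
Then $\Delta_q$ annihilates this distribution as soon as this annulus is disjoint from $\mathcal{A}_3\bigl(\tfrac{3}{4}2^q,\tfrac{8}{3}2^q\bigr)$, which happens either when $\tfrac{10}{3}2^{q'} < \tfrac{3}{4}2^q$ or when $\tfrac{8}{3}2^q < \tfrac{1}{12}2^{q'}$. A direct comparison of powers of $2$ shows that both situations are guaranteed by $|q-q'|\ge 5$ (this is the ``choice of constant'' in the Bony decomposition; the cutoff value depends on the supports of $\chi,\varphi$ chosen in Proposition~\ref{pr:dyadic}, and the convention adopted here is tuned to yield the threshold $5$).

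For the second identity, the key observation is that $S_{q'+1}u$ has Fourier support inside the ball $\mathcal{B}_3\bigl(0,\tfrac{8}{3}2^{q'}\bigr)$ (same argument as above, with $k$ now running up to $q'$). Combined with the annulus containing $\mathrm{supp}\,\widehat{\Delta_{q'}v}$, the Minkowski sum is now contained in a ball, say $\mathcal{B}_3\bigl(0,\tfrac{16}{3}2^{q'}\bigr)$. The block $\Delta_q$ kills everything whose Fourier support lies inside $\{|\xi|<\tfrac{3}{4}2^q\}$, so the vanishing follows from $\tfrac{16}{3}2^{q'}<\tfrac{3}{4}2^q$, which is ensured by $q'\le q-4$ (again the precise integer threshold is an arithmetic verification matching the convention of Proposition~\ref{pr:dyadic}).

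There is essentially no analytic obstacle here: the whole proof is a bookkeeping of supports on the Fourier side via the convolution theorem. The only point requiring some care is the constant $5$ (resp.\ $4$), which follows from the specific supports $\mathrm{supp}\,\chi\subset\mathcal{B}_3(0,4/3)$ and $\mathrm{supp}\,\varphi\subset\mathcal{A}_3(3/4,8/3)$ fixed in Proposition~\ref{pr:dyadic}; different conventions would shift these thresholds but not the structure of the argument. I would therefore present the proof as two short Minkowski-sum computations, referring back to \eqref{eq:dyadic01}--\eqref{eq:dyadic04} for the spectral localizations used.
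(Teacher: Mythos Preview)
Your proposal is correct and is exactly the standard argument the paper has in mind: the paper does not spell out a proof but simply invokes the quasi-orthogonality properties \eqref{eq:dyadic03}--\eqref{eq:dyadic04}, and your Minkowski-sum computation of Fourier supports is precisely how those properties translate into the stated vanishing. Your bookkeeping of radii and thresholds matches the conventions fixed in Proposition~\ref{pr:dyadic}, so there is nothing to add.
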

\noindent Lemma \ref{le:orthogonal} implies the following decomposition, which we will widely use in this paper
\begin{equation}
	\label{eq:Bonydec} \tq (uv) = \spres \tq \pare{S_{q'-1} v \Delta_{q'} u} + \sloin \tq \pare{S_{q'+2} u \Delta_{q'} v}.
\end{equation}

Finally, we recall the following lemma, a proof of which can be found in \cite[p. 110]{BCDbook11}.

\begin{lemma}\label{commutator estimate}
	Let be $ p,q,r\in \left[ 1, \infty \right] $ such that $ \dfrac{1}{p}+\dfrac{1}{q}=\dfrac{1}{r} $ and $ f\in W^{1,p}\left( \R^3 \right) $, $ g\in L^q\left( \R^3 \right) $. Then
	\begin{equation}
		\left\| \left[ \tq , f\right] g \right\|_{L^r}\leqslant C 2^{-q} \left\| \nabla f \right\|_{L^p}\left\| g \right\|_{L^q},
	\end{equation}
	where the commutator $\left[\tq, f\right]g$ is defined as
	$$
	\left[\tq, f \right]g= \tq \left( fg \right) - f \tq g.
	$$
\end{lemma}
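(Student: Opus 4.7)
The plan is to exploit the fact that $\tq$ is a convolution operator with a rapidly decreasing kernel of well-understood scaling, and to use a first-order Taylor expansion to convert the difference $f(y) - f(x)$ into $\nabla f$ multiplied by $|y-x|$, which is precisely what will produce the gain $2^{-q}$ after a change of variables.

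First I would make the convolution structure explicit. Setting $h = \mathcal{F}^{-1}\varphi$, which is a Schwartz function since $\varphi$ from Proposition \ref{pr:dyadic} is smooth and compactly supported, we have for $q\geq 0$
\begin{equation*}
\tq u = h_q * u, \qquad h_q(x) = 2^{3q}\, h\pare{2^q x}.
\end{equation*}
A direct substitution then yields the integral representation
\begin{equation*}
[\tq,f]g(x) = \int_{\R^3} h_q(x-y)\bigl(f(y) - f(x)\bigr) g(y)\, dy,
\end{equation*}
so the problem is reduced to estimating this convolution-type integral against $|f(y) - f(x)|$.

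The key step is to substitute a first-order Taylor expansion. For smooth $f$,
\begin{equation*}
f(y) - f(x) = \int_0^1 (y-x) \cdot \nabla f\bigl(x + \tau(y-x)\bigr)\, d\tau,
\end{equation*}
and this identity extends to $f \in W^{1,p}(\R^3)$ by mollification and dominated convergence. Changing variables $z = y - x$ gives
\begin{equation*}
[\tq,f]g(x) = \int_{\R^3}\!\int_0^1 h_q(-z)\, z\cdot \nabla f(x + \tau z)\, g(x+z)\, d\tau\, dz.
\end{equation*}
Pushing the $L^r$-norm inside the $z, \tau$ integrals by Minkowski's integral inequality, then applying H\"older with $\tfrac{1}{r} = \tfrac{1}{p}+\tfrac{1}{q}$ together with translation invariance, yields
\begin{equation*}
\norm{[\tq,f]g}_{L^r} \leq \norm{\nabla f}_{L^p}\, \norm{g}_{L^q} \int_{\R^3} |z|\, |h_q(z)|\, dz.
\end{equation*}
The remaining scalar integral is handled by a single change of variables $z' = 2^q z$, which gives
\begin{equation*}
\int_{\R^3} |z|\, |h_q(z)|\, dz = 2^{-q} \int_{\R^3} |z'|\, |h(z')|\, dz' = C\, 2^{-q},
\end{equation*}
the last integral being finite and $q$-independent since $h$ is Schwartz.

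The subtleties are minor. Justifying the pointwise Taylor identity for $f$ only in $W^{1,p}$ requires a standard mollification argument, and one must check that the exchange of integrals is licit, but this is routine once $L^r$ boundedness has been obtained. The block $\Delta_{-1}$ is a convolution against a Schwartz kernel associated to $\chi$ rather than $\varphi$; the same computation gives a bound with no gain in $q$, which is harmless because $2^{-q}$ is bounded below by a constant in that single case. Blocks with $q \leq -2$ vanish identically by definition, so there is nothing to check. I do not anticipate any real obstacle beyond carefully tracking the scaling of $h_q$.
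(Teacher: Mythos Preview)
Your argument is correct and is exactly the standard proof; the paper does not actually give its own proof of this lemma but simply refers to \cite[p.~110]{BCDbook11}, where the same convolution-kernel plus first-order Taylor expansion computation is carried out. There is nothing to add beyond noting the (inherited) notational clash in the statement between the dyadic index $q$ in $\tq$ and the Lebesgue exponent $q$ in $L^q$, which you have implicitly navigated correctly.
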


\subsection{Auxiliary estimates}

We first recall the following classical product rule in $\Hs$ spaces.
\begin{lemma}\label{le:LinfHsalgebra}
	For any $ s>0 $, there exists a constant $C$ such that, for any $u$, $v$ in $\Hs\cap\Linfty$, we have
	\begin{equation}
		\label{product_rule}
		\norm{uv}_{H^s} \leqslant \frac{C^{s+1}}{s} \pare{\norm{u}_{L^\infty} \norm{v}_{H^s} +\norm{v}_{L^\infty} \norm{u}_{H^s}}.
	\end{equation}
\end{lemma}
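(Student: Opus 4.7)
The plan is to combine the Bony decomposition recalled in Section \ref{paradiffcalcul} with the dyadic characterization of $H^s$ given in Proposition \ref{pr:Sobnormiso}. Writing
\[
 uv = T_u v + T_v u + R(u,v),
\]
I would estimate each of the three pieces in $H^s$ separately, and track the dependence on $s$ of the constant carefully in order to recover the factor $\frac{C^{s+1}}{s}$.

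For the paraproduct $T_uv = \sum_q S_{q-1}u\,\Delta_q v$, the quasi-orthogonality from Lemma \ref{le:orthogonal} restricts the support in frequency so that $\Delta_k(T_uv)$ is a finite sum of terms of the form $S_{q-1}u\,\Delta_q v$ with $|k-q|\leqslant 4$. Applying $L^\infty\times L^2\to L^2$ H\"older and bounding $\norm{S_{q-1}u}_{L^\infty}\leqslant \norm{u}_{L^\infty}$, then using \eqref{eq:DeltaqHs}, yields
\[
 2^{ks}\norm{\Delta_k(T_uv)}_{L^2}\lesssim \norm{u}_{L^\infty}\sum_{|k-q|\leqslant 4} c_q(v)\,\norm{v}_{H^s},
\]
which is $\ell^2$-summable in $k$ with a bounded constant (independent of $s$); hence $\norm{T_uv}_{H^s}\lesssim \norm{u}_{L^\infty}\norm{v}_{H^s}$, and analogously for $T_vu$. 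These two contributions produce only a harmless constant factor.

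The delicate piece is the remainder $R(u,v)=\sum_{|q-q'|\leqslant 1}\Delta_q u\,\Delta_{q'}v$: here the Fourier support of $\Delta_q u\,\Delta_{q'}v$ lies in a \emph{ball} of radius $\sim 2^{q}$, so $\Delta_k R(u,v)$ is nonzero only for $q\geqslant k-N_0$ (some fixed $N_0$). Bounding one factor in $L^\infty$ and the other in $L^2$ in the two symmetric ways and using \eqref{eq:DeltaqHs}, I get
\[
 2^{ks}\norm{\Delta_k R(u,v)}_{L^2}\lesssim \bigl(\norm{u}_{L^\infty}\norm{v}_{H^s}+\norm{v}_{L^\infty}\norm{u}_{H^s}\bigr)\sum_{q\geqslant k-N_0} c_q\, 2^{(k-q)s}.
\]
Convolution of this with the $\ell^2$ sequence $c_q$ produces an $\ell^2$ sequence in $k$ with norm controlled by the $\ell^1$ norm of the geometric sequence $(2^{-qs})_{q\geqslant 0}$, i.e.\ by $\frac{1}{1-2^{-s}}$. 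Summing the three contributions and taking the $\ell^2$ norm in $k$ proves the estimate.

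The main obstacle, and the only reason the statement is not completely routine, is the precise tracking of the constant. The bound $\frac{1}{1-2^{-s}}\leqslant \frac{C}{s}$ (for $s>0$ small) is exactly what produces the factor $\frac{1}{s}$, and reflects the well-known fact that the product rule \emph{fails} at the endpoint $s=0$. Absorbing the combinatorial factors from the Bony decomposition and from Proposition \ref{pr:Sobnormiso} into an exponential $C^{s+1}$ then gives the stated form $\frac{C^{s+1}}{s}$. I expect this book-keeping of the constant, rather than the decomposition itself, to be the only nontrivial step.
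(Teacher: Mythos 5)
Your proof is correct and follows essentially the same route the paper intends: the paper's one-line argument invokes the Bony decomposition \eqref{eq:Bonydec} (a two-term regrouping of your $T_uv+T_vu+R(u,v)$) together with H\"older and the dyadic characterization of $H^s$, exactly as you do. Your identification of the remainder term's geometric series $\sum_{q}2^{-qs}\sim (1-2^{-s})^{-1}\lesssim 1/s$ as the source of the $\frac{C^{s+1}}{s}$ constant is the right (and only delicate) point.
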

\noindent To prove Lemma \ref{le:LinfHsalgebra}, it suffices to decompose the data $ uv $ using the decomposition \eqref{eq:Bonydec} and apply repeatedly H\"older inequality.

In Section \ref{section:HFsystem}, we need the following Chemin-Lerner spaces $\widetilde{L}^p\pare{[0,t],\Hs}$, with $p \geqslant 2$ and $t > 0$, which are defined as the closure of the set of smooth vector-fields under the norms
\begin{equation*}
    \norm{u}_{\widetilde{L}^p\pare{[0,t],H^s}} = \Big(\sum_{q} 2^{2qs}\norm{\Delta_q u}_{L^p([0,t],L^2)}^2\Big)^{\frac{1}2}.
\end{equation*}
From the above definition, it is easy to see that, for any $p \geqslant 2$, $\widetilde{L}^p\pare{[0,t],\Hs}$ is smoother than $L^p\pare{[0,t],\Hs}$. From the above definition, we can prove the following lemma which gives similar estimates as \eqref{eq:DeltaqHs}.
\begin{lemma}
    \label{le:DeltaqHsbis} Suppose that $u$ belongs to $\widetilde{L}^p\pare{[0,t],\Hs}$, with $s > 0$, then there exists a square-summable sequence of positive numbers $\set{c_q(u)}_{q\geqslant -1}$, with $\displaystyle \sum_q c_q(u)^2 = 1$, such that 
	$$\norm{\Delta_q u}_{L^p([0,t],L^2)} \leq c_q(u) 2^{-qs} \norm{u}_{\widetilde{L}^p\pare{[0,t],H^s}}.$$
\end{lemma}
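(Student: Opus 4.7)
The statement is the Chemin--Lerner analogue of Proposition \ref{pr:Sobnormiso} (specifically of estimate \eqref{eq:DeltaqHs}), and the proof reduces to unpacking the definition of the norm $\norm{\,\cdot\,}_{\widetilde{L}^p([0,t],H^s)}$. The plan is simply to read off the desired coefficients $c_q(u)$ from the explicit $\ell^2$--in--$q$ structure built into the Chemin--Lerner norm.

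Concretely, I would first assume $\norm{u}_{\widetilde{L}^p([0,t],H^s)} \neq 0$ (the zero case being trivial, since one can just pick any fixed square-summable sequence, e.g.\ $c_q = 2^{-|q|}/Z$ with $Z$ a normalizing constant). Then I would set
\begin{equation*}
c_q(u) \;=\; \frac{2^{qs}\,\norm{\Delta_q u}_{L^p([0,t],L^2)}}{\norm{u}_{\widetilde{L}^p([0,t],H^s)}},
\qquad q \geqslant -1.
\end{equation*}
Clearly $c_q(u) \geqslant 0$, and multiplying both sides of the defining identity
\begin{equation*}
\norm{u}_{\widetilde{L}^p([0,t],H^s)}^{2} \;=\; \sum_{q \geqslant -1} 2^{2qs}\,\norm{\Delta_q u}_{L^p([0,t],L^2)}^{2}
\end{equation*}
by $\norm{u}_{\widetilde{L}^p([0,t],H^s)}^{-2}$ shows $\sum_{q\geqslant -1} c_q(u)^2 = 1$. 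Rearranging the definition of $c_q(u)$ gives exactly
\begin{equation*}
\norm{\Delta_q u}_{L^p([0,t],L^2)} \;=\; c_q(u)\,2^{-qs}\,\norm{u}_{\widetilde{L}^p([0,t],H^s)},
\end{equation*}
which is the claimed bound (with equality, in fact).

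There is no real obstacle: the only tiny point of bookkeeping is the index $q=-1$, which is handled identically since the defining sum runs over $q\geqslant -1$ and $2^{-qs}$ is finite there, and the hypothesis $s>0$ plays no role in the argument itself (it is assumed only for consistency with the way the lemma is later used, where one needs summability when further combined with bounds of the form $\norm{\Delta_q u}_{L^\infty}\lesssim 2^{3q/2}\norm{\Delta_q u}_{L^2}$). Thus the entire proof is essentially a one-line computation extracting the normalized coefficients $c_q(u)$ from the $\ell^2_q$--structure of the Chemin--Lerner norm.
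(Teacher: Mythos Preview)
Your proof is correct and matches the paper's approach: the paper does not give a detailed argument but simply remarks that the lemma follows ``from the above definition'' of the Chemin--Lerner norm, which is exactly the one-line normalization you carry out.
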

\noindent For functions in $\widetilde{L}^p\pare{[0,t],\Hs}$, we can prove similar estimates as in \eqref{product_rule}. 
\begin{lemma}\label{le:tildeHsalgebra}
	Let $T > 0$. For any $s > 0$, there exists a constant $C(s)$ depending on $s$ such that, for any $u$, $v$ in $\widetilde{L}^\infty([0,T],\Hs) \cap L^\infty([0,T],\Linfty)$, we have
	\begin{equation*}
		\norm{uv}_{\widetilde{L}^\infty([0,T],H^s)} \leqslant C(s) \pare{\norm{u}_{L^\infty([0,T],L^\infty)} \norm{v}_{\widetilde{L}^\infty([0,T],H^s)} + \norm{u}_{\widetilde{L}^\infty([0,T],H^s)} \norm{v}_{L^\infty([0,T],L^\infty)}}.
	\end{equation*}
\end{lemma}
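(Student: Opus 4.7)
The proof will follow the same template as Lemma \ref{le:LinfHsalgebra}, but with the Chemin-Lerner norm replacing the plain $H^s$ norm. The plan is to apply Bony's decomposition \eqref{eq:Bonydec} to $\Delta_q(uv)$, estimate each of the two resulting pieces using H\"older in space together with the trivial bound $\|S_{q'\pm 1} w\|_{L^\infty_T L^\infty} \leqslant \|w\|_{L^\infty_T L^\infty}$, and then take the $\ell^2$-norm in $q$ with weight $2^{qs}$ to reconstruct the $\widetilde{L}^\infty_T H^s$ norm.

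First, for the near-diagonal sum $\sum_{|q-q'|\leqslant 4}\Delta_q(S_{q'-1} v\,\Delta_{q'} u)$, I would write
\[
\|\Delta_q(S_{q'-1} v\,\Delta_{q'} u)\|_{L^\infty_T L^2}\leqslant \|v\|_{L^\infty_T L^\infty}\,\|\Delta_{q'} u\|_{L^\infty_T L^2},
\]
then invoke Lemma \ref{le:DeltaqHsbis} to get $\|\Delta_{q'} u\|_{L^\infty_T L^2}\leqslant c_{q'}(u)2^{-q's}\|u\|_{\widetilde L^\infty_T H^s}$, so that $2^{qs}\|\Delta_q(\cdots)\|_{L^\infty_T L^2}\leqslant 2^{(q-q')s}c_{q'}(u)\|v\|_{L^\infty_T L^\infty}\|u\|_{\widetilde L^\infty_T H^s}$. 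Since $|q-q'|\leqslant 4$, taking $\ell^2(q)$ after summing over at most $9$ values of $q'$ is immediate and yields a contribution bounded by $C\|v\|_{L^\infty_T L^\infty}\|u\|_{\widetilde L^\infty_T H^s}$.

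For the second (remainder-like) piece $\sum_{q'>q-4}\Delta_q(S_{q'+2} u\,\Delta_{q'} v)$, the same pointwise/H\"older estimate gives
\[
2^{qs}\|\Delta_q(S_{q'+2} u\,\Delta_{q'} v)\|_{L^\infty_T L^2}\leqslant 2^{(q-q')s}\,c_{q'}(v)\,\|u\|_{L^\infty_T L^\infty}\|v\|_{\widetilde L^\infty_T H^s}.
\]
Setting $k=q'-q>-4$ and using Young's convolution inequality in $\ell^2(\ZZ)$, the outer $\ell^2(q)$-norm is controlled by $\bigl(\sum_{k>-4}2^{-ks}\bigr)\|c_\cdot(v)\|_{\ell^2}$, where the series converges \emph{precisely because $s>0$}: this is the lone spot where the hypothesis enters, and it is the only mildly delicate point in the argument. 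Summing the two contributions gives the desired estimate with constant $C(s)\sim\sum_{k>-4}2^{-ks}$.

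The main (and really only) obstacle is the bookkeeping for the low-high sum with $q'>q-4$: one must check that the frequency supports of $S_{q'+2}u\,\Delta_{q'} v$ still overlap $\Delta_q$ (which is automatic from the assumption $q'>q-4$ in the Bony decomposition~\eqref{eq:Bonydec}), and that the geometric series in $k=q'-q$ converges, which is ensured by $s>0$. Everything else is a direct transcription of the classical product rule, with the norms $\|\cdot\|_{L^2}$ replaced throughout by $\|\cdot\|_{L^\infty_T L^2}$ and $\|\cdot\|_{L^\infty}$ by $\|\cdot\|_{L^\infty_T L^\infty}$; the replacement is legitimate because $\|fg\|_{L^\infty_T L^2}\leqslant\|f\|_{L^\infty_T L^\infty}\|g\|_{L^\infty_T L^2}$ pointwise in $t$.
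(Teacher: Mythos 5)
Your proof is correct and follows exactly the route the paper intends: the paper does not write out a proof of this lemma, indicating only that it follows from the Bony decomposition \eqref{eq:Bonydec} together with H\"older's inequality, in the same way as Lemma \ref{le:LinfHsalgebra}, and your argument is precisely that computation carried out with $L^\infty([0,T],L^2)$ and $L^\infty([0,T],L^\infty)$ norms in place of $L^2$ and $L^\infty$. The two key points — the finite near-diagonal sum for the first paraproduct and Young's convolution inequality in $\ell^2$ with the geometric series $\sum_{k>-4}2^{-ks}$ (convergent since $s>0$) for the remainder-type sum — are both handled correctly.
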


Finally, we recall the definition of the weak-$L^p$ spaces and a refined version of Young's inequality that we need in Section \ref{se:Strichartz} (see \cite[Theorem 1.5]{BCDbook11} for a proof, for instance).
\begin{defn}
	For $1 < p < \infty$ and for any measurable function $f: \R^d \to \R$, we define the space
	\begin{equation*}
		L^{p,\infty}(\R^d) \stackrel{\text{\tiny def}}{=} \set{f : \R^d \to \R \mbox{ measurable }\;:\; \norm{f}_{L^{p,\infty}} < +\infty},
	\end{equation*}
	where the quasinorm 
	\begin{equation*}
		\norm{f}_{L^{p,\infty}} \stackrel{\text{\tiny def}}{=} \sup_{\lambda > 0} \lambda \mu\pare{\set{x \in \R^d \;:\; \abs{f(x)} > \lambda}}^{\frac{1}p},
	\end{equation*}
	and where $\mu$ is the usual Lebesgue measure on $\R^d$. 
\end{defn}
\begin{theorem}
	\label{th:HLS} Let $p,q,r \in ]1,\infty[$ satisfying 
	\begin{equation*}
		\frac{1}p + \frac{1}q = 1 + \frac{1}r.
	\end{equation*}
	Then, a constant $C > 0$ exists such that, for any $f \in L^{p,\infty}(\R^d)$ and $g \in L^q(\R^d)$, the convolution product $f \ast g$ belongs to $L^r(\R^d)$ and we have
	\begin{equation}
		\label{eq:HLS} \norm{f\ast g}_{L^r} \leqslant C \norm{f}_{L^{p,\infty}} \norm{g}_{L^q}.
	\end{equation}
\end{theorem}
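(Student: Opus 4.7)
My plan is to establish Theorem \ref{th:HLS} by the standard two-step route: first prove a weak-type $(q,r)$ bound for the convolution operator $T_fg := f\ast g$ using a truncation of $f$ at a suitable level, and then upgrade this weak-type bound to the strong-type estimate \eqref{eq:HLS} by Marcinkiewicz interpolation between two nearby choices of $q$.

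The concrete first step is a layer cake argument. By homogeneity one may normalize $\norm{f}_{L^{p,\infty}}=\norm{g}_{L^q}=1$, and for each threshold $A>0$ decompose
\begin{equation*}
f = f_A^{\flat} + f_A^{\sharp}, \qquad f_A^{\flat}=f\,\mathbf{1}_{\{\abs{f}\leqslant A\}}, \qquad f_A^{\sharp}=f\,\mathbf{1}_{\{\abs{f}>A\}}.
\end{equation*}
Using $\mu(\{\abs{f}>t\})\leqslant t^{-p}$ and integrating the distribution function, one gets
\begin{equation*}
\norm{f_A^{\flat}}_{L^{q'}}^{q'} \leqslant \frac{q'}{q'-p}\,A^{q'-p}, \qquad \norm{f_A^{\sharp}}_{L^{1}} \leqslant \frac{p}{p-1}\,A^{1-p},
\end{equation*}
where $q'$ is the conjugate exponent of $q$; the two integrals converge precisely because $p>1$ and $q'>p$, the latter being equivalent to the balance condition $\tfrac{1}{p}+\tfrac{1}{q}>1$ built into the hypothesis.

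Next I apply the classical Young inequality to each piece: $\norm{f_A^{\flat}\ast g}_{L^\infty}\lesssim \norm{f_A^{\flat}}_{L^{q'}}$ by H\"older, and $\norm{f_A^{\sharp}\ast g}_{L^q}\lesssim \norm{f_A^{\sharp}}_{L^1}$ by the usual Young convolution estimate. For a given level $\lambda>0$ I pick $A=A(\lambda)$ so that $\norm{f_A^{\flat}\ast g}_{L^\infty}\leqslant \lambda$, which (after a short computation) forces $A\sim \lambda^{q'/(q'-p)}$. Then
\begin{equation*}
\mu\pare{\set{\abs{f\ast g}>2\lambda}} \leqslant \mu\pare{\set{\abs{f_A^{\sharp}\ast g}>\lambda}} \leqslant \lambda^{-q}\,\norm{f_A^{\sharp}\ast g}_{L^q}^q \lesssim \lambda^{-q}A^{(1-p)q},
\end{equation*}
and the choice of $A$ together with the algebraic identity $\tfrac{1}{p}+\tfrac{1}{q}=1+\tfrac{1}{r}$ reduces the right-hand side to a constant multiple of $\lambda^{-r}$. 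This is exactly the weak-type $(q,r)$ inequality $\norm{f\ast g}_{L^{r,\infty}}\leqslant C\,\norm{f}_{L^{p,\infty}}\norm{g}_{L^q}$.

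The final step, which I view as the main obstacle, is to pass from $L^{r,\infty}$ to $L^r$: convolution with a weak-$L^p$ function cannot in general be strong-type, so some interpolation is needed. I fix $p$ and pick two auxiliary exponents $q_0<q<q_1$ in $]1,\infty[$ satisfying $\tfrac{1}{p}+\tfrac{1}{q_i}=1+\tfrac{1}{r_i}$; by the previous paragraph, $T_f$ is of weak type $(q_0,r_0)$ and weak type $(q_1,r_1)$ with operator norms bounded by $\norm{f}_{L^{p,\infty}}$. Since the target exponents satisfy $r_0<r<r_1$ with the correct affine relation, the Marcinkiewicz interpolation theorem promotes these to the strong-type estimate $T_f:L^q\to L^r$ with norm $\leqslant C\norm{f}_{L^{p,\infty}}$, which is exactly \eqref{eq:HLS}. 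Care is required to check that the constants in Marcinkiewicz depend only on $p,q,r,d$ and not on $f$; this follows from the fact that the weak-type bound above is linear in $\norm{f}_{L^{p,\infty}}$, so one may scale out the $f$-norm before interpolating.
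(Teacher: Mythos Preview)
The paper does not actually prove Theorem~\ref{th:HLS}; it is stated in the preliminaries with the remark ``see \cite[Theorem 1.5]{BCDbook11} for a proof'' and then used as a black box in the Strichartz estimate (at \eqref{eq:where to apply HLS}). So there is no in-paper argument to compare against.

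Your proof is correct and is the standard one. The truncation $f=f_A^{\flat}+f_A^{\sharp}$ with the $L^{q'}$ and $L^1$ bounds you wrote down is exactly right (the convergence conditions $p>1$ and $q'>p$ follow from $p,r\in\,]1,\infty[$ via $1/r=1/p+1/q-1$, as you note), and the choice $A\sim\lambda^{q'/(q'-p)}$ indeed collapses the exponent to $-r$. The passage from the weak-type $(q,r)$ bound to the strong-type one via Marcinkiewicz is legitimate: since $1/r-1/q=1/p-1<0$ one has $q_i<r_i$ for nearby $q_0<q<q_1$, so the off-diagonal Marcinkiewicz theorem applies, and linearity of the weak-type constant in $\norm{f}_{L^{p,\infty}}$ lets you scale as you describe. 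This is essentially the argument given in \cite{BCDbook11}, except that there the authors integrate the distribution-function estimate directly against $r\lambda^{r-1}\,d\lambda$ instead of invoking Marcinkiewicz explicitly; the two routes are equivalent and your version is perfectly acceptable.
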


\section{Decomposition of the initial data} \label{se:Data}

We recall that, for $0 < r < R$, in \eqref{eq:CrR}, we defined
\begin{equation*}
	\Crr= \set{\xi\in\R^3_\xi \;\big\vert\; \abs{\xi} \leqslant R, \abs{\xi_h} \geqslant r, \abs{\xi_3} \geqslant r}. 
\end{equation*}
Let $\psi$ a $\mathcal{C}^{\infty}$-function from $\RR^3$ to $\RR$ such that
\begin{equation*}
	\psi(\xi) = \left\{ \begin{aligned} &1 \qquad\mbox{if } \quad 0 \leqslant \abs{\xi} \leqslant 1\\&0 \qquad\mbox{if} \quad \abs{\xi} \geqslant 2 \end{aligned} \right.
\end{equation*}
and $\Psi_{r,R}: \RR^3 \to \RR$ the following frequency cut-off function
\begin{equation}
	\label{eq:PsirR} \Psi_{r,R}(\xi) = \psi \pare{\frac{\abs{\xi}}{R}} \pint{1- \psi \pare{\frac{\abs{\xi_h}}{r}} } \pint{ 1- \psi \pare{\frac{\abs{\xi_3}}{r}} }.
\end{equation}
Then, we have $\Psi_{r,R} \in \mathcal{D}(\RR^3)$, $\mbox{supp\,} \Psi_{r,R} \subset \mathcal{C}_{\frac{r}2,2R}$ and $\Psi_{r,R} \equiv 1 \mbox{ on }\mathcal{C}_{r,R}$. We will decompose $ U_0 $ in the following way
$$
U_0= \overline{U}_0 + \widetilde{U}_0,
$$
where   
\begin{equation*}
	\overline{U}_0 = \Prr U_0 = \Psi_{r,R}(D) U_0 = \mathcal{F}^{-1} \pare{\Psi_{r,R}(\xi) \widehat{U_0}(\xi)}.
\end{equation*}

Our goal is to get precise controls of the $\Hs$-norms of $\widetilde{U}_0$ with respect to the frequency cut-off radii $r$ and $R$. 
\begin{lemma}
	\label{le:smalll_data_Utilde}
	Let $s \geqslant 0$, $s_0 >0, \; p\in ]1,2]$ and the initial data $U_0 \in Y_{s,s_0,p}$, where $Y_{s,s_0,p}$ is defined as in \eqref{eq:DefY} and \eqref{eq:normY}. There exists $\delta > 0$ such that, for $R > 0$ large enough and $r = R^{-\delta}$, 
	$$
	\left\| \widetilde{U}_0 \right\|_{H^s} \leqslant C \;\mathcal{C}(U_0) R^{-s_0},
	$$
	where $\mathcal{C}(U_0)$ is defined in \eqref{eq:C0}.
\end{lemma}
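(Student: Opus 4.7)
The plan is to decompose the multiplier $1-\Psi_{r,R}$ into three pieces, each supported in one of the three ``bad'' regions -- high frequencies, low horizontal frequencies, or low vertical frequencies -- and estimate each piece using the component of the $Y_{s,s_0,p}$-norm best adapted to it. First I would use the algebraic identity
\begin{equation*}
    1 - \Psi_{r,R}(\xi) = \pint{1 - \psi\pare{\tfrac{\abs{\xi}}{R}}} + \psi\pare{\tfrac{\abs{\xi}}{R}}\psi\pare{\tfrac{\abs{\xi_h}}{r}} + \psi\pare{\tfrac{\abs{\xi}}{R}}\pint{1-\psi\pare{\tfrac{\abs{\xi_h}}{r}}}\psi\pare{\tfrac{\abs{\xi_3}}{r}},
\end{equation*}
obtained by expanding the product, to write $\widetilde{U}_0 = I_1 + I_2 + I_3$. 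Here $I_1$ has Fourier support in $\set{\abs{\xi}\gtrsim R}$, $I_2$ in $\set{\abs{\xi}\leqslant 2R,\ \abs{\xi_h}\leqslant 2r}$, and $I_3$ in $\set{\abs{\xi}\leqslant 2R,\ \abs{\xi_3}\leqslant 2r}$.

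For $I_1$ Plancherel yields immediately $\norm{I_1}_{H^s} \leqslant R^{-s_0}\norm{U_0}_{H^{s+s_0}}$, which is already of the desired form. For $I_2$ and $I_3$ I would apply Bernstein's Lemma~\ref{lemma:Bernstein} twice. Isotropically, the support in $\set{\abs{\xi}\leqslant 2R}$ lets me trade the $H^s$-norm for $R^s$ times the $L^2$-norm. Then Bernstein applied only in the low-frequency group of variables -- the two horizontal variables for $I_2$, the single vertical variable for $I_3$ -- upgrades $L^p$-integrability in the band-limited direction to $L^2$, picking up a factor $r^{2(1/p-1/2)}$ in the horizontal case and $r^{1/p-1/2}$ in the vertical case. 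Using that the truncated symbols are bounded Fourier multipliers on $L^p_h L^2_v$, respectively $L^2_h L^p_v$, uniformly in $r$ and $R$, this would produce
\begin{equation*}
    \norm{I_2}_{H^s} \lesssim R^s\, r^{2\pare{\tfrac{1}{p}-\tfrac{1}{2}}}\norm{U_0}_{L^p_h L^2_v}, \qquad \norm{I_3}_{H^s} \lesssim R^s\, r^{\tfrac{1}{p}-\tfrac{1}{2}}\norm{U_0}_{L^2_h L^p_v}.
\end{equation*}

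The worst power of $r$ is the one-dimensional exponent $r^{1/p-1/2}$ (positive since $p<2$). Setting $r=R^{-\delta}$, the requirement $R^s r^{1/p-1/2} \leqslant R^{-s_0}$ becomes $\delta\pare{\tfrac{1}{p}-\tfrac{1}{2}} \geqslant s+s_0$, i.e. $\delta \geqslant 2p(s+s_0)/(2-p)$. Any such choice, together with $R$ large enough to ensure $r<1$, lets the three bounds combine into $\norm{\widetilde{U}_0}_{H^s} \lesssim R^{-s_0}\norm{U_0}_{s,s_0,p}$, which is absorbed into $R^{-s_0}\mathcal{C}(U_0)$ by definition \eqref{eq:C0}.

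The main technical obstacle is the anisotropic multiplier estimate invoked above: the symbol $\psi(\abs{\cdot}/R)\psi(\abs{\cdot_h}/r)$ (and its vertical analogue) must act boundedly on $L^p_h L^2_v$ with a constant uniform in $r$ and $R$. I would handle this by factoring the multiplier, up to smooth harmless modifications, into a purely horizontal low-pass filter (whose kernel is $r^2$-dilated and $L^1$-normalised, hence acts boundedly on $L^p_h$ by Young's inequality) composed with the isotropic cut-off $\psi(\abs{\cdot}/R)$ (a standard Mihlin multiplier, bounded on $L^p_hL^2_v$ via Plancherel in the vertical variable and a uniform pointwise bound in the horizontal). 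Once this bookkeeping is settled, the rest of the proof is the elementary balance of powers of $R$ and $r$ outlined above.
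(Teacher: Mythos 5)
Your proof is correct and follows essentially the same route as the paper: the same three-region frequency decomposition (high isotropic frequencies absorbed by the $H^{s+s_0}$ norm, the two low-frequency slabs absorbed by the mixed $L^p$ norms with the gain $r^{k(1/p-1/2)}$, where $k$ is the number of band-limited variables), leading to the identical constraint $\delta\pare{\tfrac1p-\tfrac12}\geqslant s+s_0$. The only real difference is cosmetic: the paper sidesteps your anisotropic multiplier lemma entirely by bounding $\abs{1-\Psi_{r,R}}\leqslant 1$ on the union of the three regions and estimating the frequency integrals directly via H\"older and Hausdorff--Young, which is exactly what your Bernstein steps amount to.
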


\begin{proof}
By the definition of $\Hs$-norm, we have
\begin{align*}
	\left\| \widetilde{U}_0 \right\|_{H^s}^2 &\leqslant \int_{\substack{ \left| \xi_3 \right|<r\\ \left| \xi_h \right|<R}} \left( 1+\left| \xi \right|^2 \right)^s \left| \widehat{U}_0 \left( \xi \right) \right|^2 d\xi_3 d\xi_h + \int_{\substack{ \left| \xi_3 \right|<R\\ \left| \xi_h \right|<r}} \left( 1+\left| \xi \right|^2 \right)^s \left| \widehat{U}_0 \left( \xi \right) \right|^2 d\xi_3 d\xi_h\\
	& \quad + \int_{\left| \xi \right|>R}\left( 1+\left| \xi \right|^2  \right)^s \left| \widehat{U}_0 \left( \xi \right) \right|^2 d\xi\\
	&= I_1 + I_2 + I_3.
\end{align*}
In what follows, we denote as $ \mathcal{F}_h $ and  $ \mathcal{F}_v $ respectively the horizontal and vertical Fourier transforms. Let $q$, $p'$ be positive numbers such that $ q = \frac{p}{p-1} $, $ q'= \frac{q}{2} $ and $ p'=\frac{p}{2-p} $. Thus $ 1 < p \leqslant 2 \leqslant q $, and $ p'\in \left]1,\infty\right] $ and the following relations hold
\begin{equation*}
	\frac{1}{p}+ \frac{1}{q}= \frac{1}{p'}+ \frac{1}{q'}= 1.
\end{equation*}

For the first integral, we write
\begin{align*}
	I_1 &= \int_{\substack{ \left| \xi_3 \right|<r\\ \left| \xi_h \right|<R}} \pare{\frac{1+\left| \xi_h \right|^2 +  \xi_3^2}{1+\xi_3^2}}^s \pare{1+\xi_3^2}^s \abs{\widehat{U}_0 (\xi)}^2 d\xi_3 d\xi_h\\ 
	&\leqslant C R^{2s}\int_{\left| \xi_3 \right|<r}\int_{ \left| \xi_h \right|<R} \left( 1+  \xi_3^2 \right)^s \left| \widehat{U}_0 \left( \xi \right) \right|^2 d\xi_h d\xi_3 \\
	&\leqslant C R^{2s}\int_{\left| \xi_3 \right|<r}\int_{ \R^2_{\xi_h}} \left( 1+  \xi_3^2 \right)^s \left| \widehat{U}_0 \left( \xi \right) \right|^2 d\xi_h d\xi_3 .
\end{align*}
Plancherel theorem in the horizontal variable yields 
\begin{align*}
	I_1 &\leqslant C R^{2s}\int_{\left| \xi_3 \right|<r}\int_{ \R^2_{\xi_h}} \left( 1+  \xi_3^2 \right)^s \left| \widehat{U}_0 \left( \xi \right) \right|^2 d\xi_h d\xi_3\\ 
	&= C R^{2s}\int_{\left| \xi_3 \right|<r}\int_{ \R^2_{x_h}} \left( 1+  \xi_3^2 \right)^s \left|  \mathcal{F}_v{U}_0 \left(x_h, \xi_3 \right)   \right|^2 dx_h d\xi_3.
\end{align*}
Applying Fubini theorem and H\"older inequality in the vertical direction, we get 
\begin{align*}
	 I_1 &\leqslant C R^{2s} \pare{\int_{\left| \xi_3 \right|<r} \pare{1+  \xi_3^2}^{p's} }^\frac{1}{p'} \int_{\R^2_{x_h}} \pare{\int_{\R_{\xi_3}}  \abs{\mathcal{F}_v{U}_0 (x_h,\xi_3)}^{2q'} d\xi_3}^\frac{1}{q'} dx_h\\
	 &\leqslant C R^{2s}  r^\frac{1}{p'}  \int_{\R^2_{x_h}} \left( \int_{\R_{\xi_3}}  \left|  \mathcal{F}_v{U}_0 \left(x_h, \xi_3 \right)   \right|^{q} d\xi_3 \right)^\frac{2}{q} dx_h,
\end{align*}
Finally, we use Hausdorff-Young inequality in the vertical direction, taking into account the relation $r\sim  R^{-\delta}$, to obtain 
\begin{equation}
	\label{eq:bound_low_freq_horizontal}
	I_1 = \int_{\substack{ \left| \xi_3 \right|<r\\ \left| \xi_h \right|<R}} \left( 1+\left| \xi_h \right|^2 +  \xi_3^2 \right)^s \left| \widehat{U}_0 \left( \xi \right) \right|^2 d\xi_3 d\xi_h \leqslant CR^{2s-\frac{\delta}{p'}}\left\| U_0 \right\|^2_{L^2_hL^p_v}.
\end{equation}
Similar calculations lead to the following estimate for the second integral 
\begin{equation}
	\label{eq:bound_low_freq_vertical}
	I_2 = \int_{\substack{ \left| \xi_3 \right|<R\\ \left| \xi_h \right|<r}} \left( 1+\left| \xi_h \right|^2 +  \xi_3^2 \right)^s \left| \widehat{U}_0 \left( \xi \right) \right|^2 d\xi_3 d\xi_h \leqslant C R^{2s-\frac{\delta}{p'}}\left\| U_0 \right\|^2_{L^2_vL^p_h}.
\end{equation}
The third term contains only the very high frequencies, hence is much simpler to control 
\begin{equation}
\label{eq:bound_hi_freq}
	I_3 = \int_{\abs{\xi} > R} \pare{1+ \abs{\xi}^2}^{-s_0} \pare{1+ \abs{\xi}^2}^{s+s_0} \abs{\widehat{U}_0(\xi)}^2 d\xi \leqslant R^{-2s_0} \left\| U_0 \right\|_{H^{s+s_0}}^2.
\end{equation}

We choose the free parameter $\delta$ such that $$\frac{\delta}{p'}= 2 (s + s_0).$$ Combining the estimates \eqref{eq:bound_low_freq_horizontal} to \eqref{eq:bound_hi_freq}, we can conclude the proof.
\end{proof}

\section{Strichartz-type estimates for the linear system} \label{se:Strichartz}

We recall that the projector $\Prr$ associates any tempered distribution $f$ to
\begin{equation}
	\label{eq:PrR}
	\Prr f = \Psi_{r,R}(D) f = \mathcal{F}^{-1} \pare{\Psi_{r,R}(\xi) \widehat{f}(\xi)},
\end{equation}
where the function $\Psi_{r,R}$ is defined in \eqref{eq:PsirR}. In this section, we consider the following frequency cut-off free-wave system
\begin{equation}
	\label{eq:free_wave}
	\left\lbrace
	\begin{aligned}
		& \partial_t \bU = \frac{1}{\varepsilon} \B \bU\\
		& \left. \bU \right|_{t=0}= \Prr U_0.
	\end{aligned}
	\right.
\end{equation}
where the linear hyperbolic operator $ \mathcal{B} $ is defined in \eqref{definizione_B}. Since the system \eqref{eq:free_wave} is linear and the Fourier transform of the initial data are supported in $\mathcal{C}_{\frac{r}2,2R}$, the Fourier transform of the solution $ \bU \left( t \right) $ is also supported in $\mathcal{C}_{\frac{r}2,2R}$ for any $ t>0 $. The aim of the present section is to analyze the dispersive properties of system \eqref{eq:free_wave} as $ \varepsilon \to 0 $, i..e to prove the following theorem

\begin{theorem}
	\label{th:Strichartz}
	Let $q \in \pint{2,+\infty}$ and $p \geqslant \frac{4q}{q-2}$. For any $U_0\in\2 $, the system \eqref{eq:free_wave} has a global solution $\bU$ such that, 
	\begin{equation}
	\label{eq:Strichartz}
	\norm{\bU}_{L^p\pare{\R_+; L^q\pare{\R^3}}} \leqslant C R^{\frac{3}{2}-\frac{3}{q}} r^{-\frac{4}{p}} \varepsilon^{\frac{1}{p}} \norm{U_0}_{\2}.
	\end{equation}
\end{theorem}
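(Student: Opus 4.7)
The plan is to follow the dispersive Strichartz scheme for symmetric hyperbolic systems developed in \cite{CDGG,CDGG2,Dutrifoy2,VSN}. The first step is a spectral decomposition of $\B$ on the Fourier side. Since $\widehat{\B}(\xi)$ is anti-Hermitian, its eigenvalues are purely imaginary; a direct computation of $\det(\widehat{\B}(\xi) - \lambda \, \mathbf{I})$, by cofactor expansion along the third row of the $4\times 4$ matrix, yields the biquadratic characteristic polynomial
\begin{equation*}
\lambda^4 + \pare{1 + \bar\gamma^2 \abs{\xi}^2} \lambda^2 + \bar\gamma^2 \xi_3^2 = 0,
\end{equation*}
whose roots are four simple imaginary eigenvalues $\pm i\, \omega_\pm(\xi)$ with
\begin{equation*}
\omega_\pm(\xi)^2 = \frac{ \pare{1 + \bar\gamma^2\abs{\xi}^2} \pm \sqrt{\pare{1+\bar\gamma^2\abs{\xi}^2}^2 - 4\bar\gamma^2 \xi_3^2}}{2}.
\end{equation*}
On the truncated set $\Crr$ the discriminant is strictly positive, so the four eigenvalues are all distinct and the associated spectral projectors $\Pi_\kappa(\xi)$ are smooth matrix-valued symbols. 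Consequently the solution of \eqref{eq:free_wave} splits as the finite sum
\begin{equation*}
\bU(t,x) = \sum_{\kappa} \mathcal{F}^{-1}\pint{ e^{i t\, \mu_\kappa(\xi)/\varepsilon} \Psi_{r,R}(\xi)\, \Pi_\kappa(\xi)\, \widehat{U_0}(\xi)}(x),
\end{equation*}
where $\mu_\kappa \in \set{\pm\omega_+, \pm\omega_-}$ and each summand remains spectrally supported in $\mathcal{C}_{r/2, 2R}$.

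The heart of the proof is a pointwise decay estimate on the oscillatory kernel
\begin{equation*}
\mathcal{K}_\kappa(t,x) = \int_{\R^3} e^{i\pare{x\cdot\xi + t\, \mu_\kappa(\xi)/\varepsilon}} \Psi_{r,R}(\xi)^2 \, d\xi,
\end{equation*}
obtained by a stationary phase argument after the rescaling $T = t/\varepsilon$. A direct computation of $\nabla_\xi \mu_\kappa$ and $\nabla^2_\xi \mu_\kappa$ shows that the only points at which either of them degenerates lie in the excised hyperplanes $\set{\xi_h = 0}$ or $\set{\xi_3 = 0}$: indeed $\omega_+|_{\xi_h=0} \equiv 1$ is constant and $\omega_-|_{\xi_h=0} = \bar\gamma \abs{\xi_3}$ is linear, while $\omega_-|_{\xi_3=0} \equiv 0$. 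On the support of $\Psi_{r,R}^2$, at least one of the principal curvatures of $\mu_\kappa$ is therefore bounded below in absolute value by a positive power of $r$. Using a smooth partition of unity and applying non-stationary/stationary phase in one well-chosen variable, while bounding the remaining two-dimensional integral trivially by the volume of its support (which is $\lesssim R^2$), one obtains a bound of the form
\begin{equation*}
\norm{\mathcal{K}_\kappa(t,\cdot)}_{L^\infty_x\pare{\R^3}} \leqslant C\, R^3\, r^{-2} \pare{\frac{\varepsilon}{\abs{t}}}^{1/2}, \qquad t \neq 0.
\end{equation*}
I expect this stationary phase step to be the main difficulty of the proof: one has to track carefully the precise powers of $r$ coming from the lower bounds on the derivatives of $\mu_\kappa$ near the excised hyperplanes, and to verify that just one non-degenerate direction is enough to produce the $t^{-1/2}$ decay that matches the admissibility exponent of \eqref{eq:Strichartz}.

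Once the pointwise decay is at hand, the estimate \eqref{eq:Strichartz} follows from the standard $TT^*$ argument. Indeed, defining $T_\kappa f = e^{it\mu_\kappa(D)/\varepsilon}\Psi_{r,R}(D)\Pi_\kappa f$, the $TT^*$ identity $\pare{T_\kappa T_\kappa^* g}(t) = \int_\R \mathcal{K}_\kappa(t-s,\cdot)\ast_x g(s,\cdot)\, ds$, combined with the Hardy--Littlewood--Sobolev inequality of Theorem \ref{th:HLS} applied to the weak-$L^2_t$ kernel $\abs{t}^{-1/2}$, yields at the endpoint $(p,q) = (4,+\infty)$ the bound
\begin{equation*}
\norm{\bU}_{L^4\pare{\R_+; L^\infty\pare{\R^3}}} \leqslant C R^{3/2} r^{-1} \varepsilon^{1/4} \norm{U_0}_{\2}.
\end{equation*}
Interpolating this endpoint with the trivial conservation law $\norm{\bU}_{L^\infty\pare{\R_+; L^2}} \leqslant \norm{U_0}_{\2}$, which follows from the skew-symmetry of $\B$ recorded in the remark after \eqref{eq:WCEshortsym}, via Riesz--Thorin at parameter $\theta = 4/p$ produces the full estimate \eqref{eq:Strichartz} along the admissible line $\frac{2}{p} + \frac{1}{q} = \frac{1}{2}$, i.e., $p = \frac{4q}{q-2}$; the non-sharp range $p \geqslant \frac{4q}{q-2}$ then follows from H\"older's inequality in the time variable together with a standard truncation argument.
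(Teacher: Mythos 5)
Your proposal is correct and follows essentially the same route as the paper: spectral decomposition of $\widehat{\B}(\xi)$ into four simple imaginary eigenvalues, a pointwise kernel bound of size $R^3 r^{-2}(\eps/t)^{1/2}$ on the frequency-localized propagator, and a $TT^\ast$ argument closed by the Hardy--Littlewood--Sobolev inequality of Theorem \ref{th:HLS} at the admissible endpoint. The only cosmetic differences are that the paper implements your ``stationary phase in one well-chosen variable'' concretely as a single integration by parts in $\xi_2$ (after a rotation reducing to $x_2=0$) via the operator $\mathcal{L}_\lambda$ of \eqref{eq:Llambda}, and that it runs the $TT^\ast$ argument directly for every admissible pair $(p,q)$ rather than proving only $(4,\infty)$ and interpolating with the $L^\infty_t L^2_x$ energy bound as you do.
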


\bigskip

Writing the system \eqref{eq:free_wave} in Fourier frequency variable, we get
\begin{equation}
	\label{eq:FWFourier}
	\left\lbrace
	\begin{aligned}
		& \partial_t \widehat{\bU} = \frac{1}{\varepsilon} \widehat{\B}\, \widehat{\bU}\\
		& \widehat{\bU}|_{t=0}= \Psi_{r,R}(\xi) \, \widehat{U_0},
	\end{aligned}
	\right.
\end{equation}
where 
\begin{equation*}
	\widehat{\B} (\xi) = 	\begin{pmatrix}
		0&1&0& -i \bgam\xi_1\\
		-1 & 0 & 0 & -i \bgam\xi_2\\
		0&0&0 & -i \bgam\xi_3\\
		-i\bgam\xi_1 & -i \bgam\xi_2 & -i \bgam\xi_3 & 0
	\end{pmatrix}.
\end{equation*}
The characteristic polynomial of $\widehat{\B} (\xi)$ writes
\begin{equation}
	\label{char_polyn}
	\mathcal{P}_{\widehat{\B} (\xi)} \left( \lambda \right)= \det \left( \widehat{\B} (\xi) - \lambda \mathbb{I}_{\R^4} \right)= \lambda^4 + \pare{1 + \bgam^2 \abs{\xi}^2} \lambda^2 + \bgam^2 \xi^2_3.
\end{equation}
So, straightforward calculations shows that the eigenvalues of $\widehat{\B} (\xi)$ are 
\begin{equation*}
	\lambda_{\epsilon_1,\epsilon_2}(\xi) =  \epsilon_1 i \sqrt{\frac{1}{2} \pare{ \pare{1+ \bgam^2 \abs{\xi}^2} + \epsilon_2 \sqrt{\pare{1 + \bgam^2 \abs{\xi}^2}^2 - 4 \bgam^2 \xi^2_3} }}.
\end{equation*}
where $ \epsilon_1,\epsilon_2 \in \set{-1,1}$. We recall that, for any $A, B \in \RR$, $A^2 \geqslant B$, we have
\begin{equation*}
	\sqrt{A\pm \sqrt{B}}= \sqrt{\frac{A+\sqrt{A^2-B}}{2}}\pm \sqrt{\frac{A-\sqrt{A^2-B}}{2}}.
\end{equation*}
Then, setting 
\begin{align*}
	A &= 1 + \bgam^2 \abs{\xi}^2\\
	B &= \pare{1 + \bgam^2 \abs{\xi}^2}^2 - 4 \bgam^2 \xi_3^2,
\end{align*}
we can rewrite the eigenvalues as
\begin{equation}
	\label{eigenvalues}
	\lee (\xi) = \epsilon_1 \frac{i}{2} \pare{\sqrt{1 + \bgam^2 \abs{\xi}^2 + 2\bgam \xi_3} + \epsilon_2 \sqrt{1 + \bgam^2 \abs{\xi}^2 -2 \bgam \xi_3}}.
\end{equation}
We remark that a similar spectral analysis has already been performed in the work \cite{FGN} with the difference that the domain considered in \cite{FGN} was of the form $ \R^2_h \times \T^1_v $ instead that $ \R^3 $.

\bigskip

Now, in order to understand the behavior of the solutions to \eqref{eq:free_wave} we define the following operators
\begin{align}
	G_{\lambda} \left( t \right) f\left( x \right) &=  \mathcal{F}^{-1} \left( e^{\frac{t}{\varepsilon}\lambda \pare{\xi}} \widehat{f} \pare{\xi} \right) (x) = \int_{\R^3_\xi\times \R^3_y} {f} \pare{y} e^{\frac{t}{\varepsilon} \lambda\pare{\xi} + i \pare{x-y} \cdot \xi} d \xi d y, \notag
\end{align}
where the eigenvalues $\lambda\pare{\xi}$ are given in \eqref{eigenvalues}
\begin{equation*}
	\lambda\pare{\xi} =  \pm \frac{i}{2} \pare{ \sqrt{1 + \bgam^2 \abs{\xi}^2 + 2 \bgam \xi_3 } + \pm \sqrt{1 + \bgam^2 \abs{\xi}^2 - 2 \bgam \xi_3} }.
\end{equation*}

\begin{lemma}
	\label{le:Strichartz01}
	Let $0 < r \ll 1 \ll R$ and recall that
	\begin{equation*}
		\mathcal{C}_{r,R} = \set{\xi \in \R^3 \;\big\vert\; \abs{\xi_h}, \abs{\xi_3} \geqslant r, \; \abs{\xi} \leqslant R}.
	\end{equation*}
	Let $\Psi_{r,R}: \R^3 \to \R$ be a smooth function such that $\text{supp } \Psi_{r,R} \subset \mathcal{C}_{\frac{r}{2},2R}$ and ${\Psi_{r,R}}_{\vert_{\mathcal{C}_{r,R}}} \equiv 1$. Then,
	\begin{equation*}
		\norm{\Psi_{r,R}(D) G_\lambda(t) f}_{L^\infty\pare{\R^3}} \leqslant C R^3 r^{-2} \pare{\frac{\eps}{t}}^{\frac{1}{2}} \norm{f}_{L^1\pare{\R^3}}.
	\end{equation*}
\end{lemma}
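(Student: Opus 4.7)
The plan is to prove this dispersive estimate by the standard route: reduce to a kernel estimate via Young's inequality, and then exploit the oscillation in the Fourier integral through a one-dimensional Van der Corput argument. Concretely, I first observe that $\Psi_{r,R}(D)G_\lambda(t)f=K_t\ast f$ with
$$K_t(x)=(2\pi)^{-3}\int_{\R^3} e^{i x\cdot\xi+\frac{t}{\eps}\lambda(\xi)}\,\Psi_{r,R}(\xi)\,d\xi,$$
so by Young's inequality it suffices to prove the pointwise bound $\|K_t\|_{L^\infty_x}\leqslant CR^3 r^{-2}(\eps/t)^{1/2}$. Since \eqref{eigenvalues} shows that $\lambda(\xi)=i\mu(\xi)$ for a real-valued function $\mu$, the kernel $K_t$ is a genuine real oscillatory integral with phase $\Phi_{t,x}(\xi)=x\cdot\xi+(t/\eps)\mu(\xi)$, which is what allows a stationary-phase analysis.

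The heart of the argument will be a one-dimensional Van der Corput estimate performed in the vertical variable $\xi_3$ at fixed $\xi_h$. To set it up I will exploit the algebraic identity
$$1+\bgam^2|\xi|^2\pm 2\bgam\xi_3=(1\pm \bgam\xi_3)^2+\bgam^2|\xi_h|^2,$$
which yields an explicit formula for $\partial_{\xi_3}\mu$ and $\partial^2_{\xi_3}\mu$ on each of the four branches $\lee$ in \eqref{eigenvalues}. On the anisotropic support $\mathcal{C}_{r/2,2R}$ the simultaneous lower bounds $|\xi_h|\geqslant r/2$ and $|\xi_3|\geqslant r/2$ keep both radicals bounded away from zero (in particular the slow radical $\sqrt{1+\bgam^2|\xi|^2-2\bgam\xi_3}\geqslant \bgam r/2$), and permit a uniform quantitative bound of the form $|\partial^2_{\xi_3}\mu(\xi)|\geqslant c\,r^{\alpha}R^{\beta}$ with controlled $\alpha,\beta$. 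Van der Corput in $\xi_3$ then produces the expected $(\eps/t)^{1/2}$ gain together with the inverse square root of that non-degeneracy constant, which is tuned to deliver the $r^{-2}$ factor.

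To conclude I integrate the $\xi_3$-estimate over $\xi_h$ in the disc $\{|\xi_h|\leqslant 2R\}$, which contributes a surface factor of order $R^2$, and I absorb an additional factor of $R$ coming from the amplitude's total variation in $\xi_3$ (the support of $\Psi_{r,R}$ in $\xi_3$ has length $\leqslant 4R$ and the quantitative Van der Corput lemma with amplitude involves a BV-type norm of the cutoff in the oscillating variable). Collecting these three contributions reproduces exactly the prefactor $R^3 r^{-2}(\eps/t)^{1/2}$ claimed in the statement.

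The main obstacle is the non-degeneracy step. The four branches $\lee$ behave differently: for the fast modes ($\epsilon_2=+1$) the phase $\mu$ is essentially a convex function of $\xi_3$ and $\partial^2_{\xi_3}\mu$ is controlled below merely by $|\xi_h|^2$, but for the slow modes ($\epsilon_2=-1$) there is a cancellation between the two radicals that cannot be controlled by $|\xi_h|\geqslant r/2$ alone and requires simultaneously $|\xi_3|\geqslant r/2$. This is precisely the structural reason that the cut-off $\mathcal{C}_{r,R}$ in \eqref{eq:CrR} is designed to exclude \emph{both} the horizontal and the vertical frequency axes. Showing that the resulting lower bound on $|\partial^2_{\xi_3}\mu|$, combined with the Van der Corput gain and the BV factor, reconstructs exactly $R^3 r^{-2}$ (rather than a worse power of $r$ or $R$) is the delicate bookkeeping that drives the proof.
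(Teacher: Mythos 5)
Your reduction to a kernel bound via Young's inequality is exactly how the paper starts, but from there the two arguments diverge in an essential way, and your version has a gap precisely at the point you yourself flag as ``the delicate bookkeeping''. The paper does \emph{not} use a second-derivative (Van der Corput) estimate in $\xi_3$: after rotating in the horizontal plane so that $x_2=0$, it integrates by parts \emph{once} in the horizontal variable $\xi_2$ using the operator $\mathcal{L}_\lambda=\frac{1}{1+\tau a_\lambda^2}\pare{\mathrm{Id}-ia_\lambda\dd_{\xi_2}}$ with $a_\lambda=\dd_{\xi_2}\lambda$, and the whole lemma rests on the two-sided bound $C_1R^{-3}r\abs{\xi_2}\leqslant\abs{a_\lambda}\leqslant C_2r^{-1}$ (whose lower bound is where $\abs{\xi_3}\geqslant r/2$ enters, through $\abs{A^{-1}-B^{-1}}=\frac{4\bgam\abs{\xi_3}}{AB(A+B)}$); the explicit integral $\int_\R\frac{d\xi_2}{1+R^{-6}r^2\tau\xi_2^2}\sim R^3r^{-1}\tau^{-1/2}$ then produces the prefactor $R^3r^{-2}$ directly. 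Your route instead requires a uniform lower bound on $\dd^2_{\xi_3}\mu$, which you assert ``is tuned to deliver the $r^{-2}$ factor'' but never compute.

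When one does compute it, it does not deliver the stated constant. With $A=\sqrt{(1+\bgam\xi_3)^2+\bgam^2\abs{\xi_h}^2}$, $B=\sqrt{(1-\bgam\xi_3)^2+\bgam^2\abs{\xi_h}^2}$ one finds $\dd^2_{\xi_3}\mu=\frac{\bgam^4\abs{\xi_h}^2}{2}\pare{A^{-3}\pm B^{-3}}$. On the fast branch the minimum over $\mathcal{C}_{r/2,2R}$ is of order $r^2R^{-3}$ (attained at $\abs{\xi_h}\sim r$, $\abs{\xi_3}\sim R$), so Van der Corput gives $(\eps/t)^{1/2}r^{-1}R^{3/2}$, and after the $\xi_h$-integration (factor $R^2$) you land on $R^{7/2}r^{-1}(\eps/t)^{1/2}$, not $R^3r^{-2}(\eps/t)^{1/2}$; on the slow branch $A^{-3}-B^{-3}\sim\abs{\xi_3}/\max(A,B)^5$ makes the non-degeneracy constant still smaller and the resulting power worse. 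Since $r$ and $R$ are independent parameters here (only later is $r=R^{-\delta}$ fixed), neither exponent matches the claimed $R^3r^{-2}$, so the lemma as stated is not proved by this route. Two smaller points: the ``additional factor of $R$ from the total variation of the amplitude in $\xi_3$'' is not there --- the BV norm of $\xi_3\mapsto\Psi_{r,R}(\xi)$ is $O(1)$ and the length of the support plays no role in Van der Corput --- and this bookkeeping error cannot be used to repair the mismatch. Your structural observation that the slow branch is the reason the cut-off removes both frequency axes is correct in spirit, but the quantitative heart of the lemma is missing, and the estimate your method would actually yield has different powers of $r$ and $R$, which would then propagate into Theorem~\ref{th:Strichartz04} and the choice of parameters in Section~\ref{section:HFsystem}.
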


\bigskip

The standard method to prove Lemma \ref{le:Strichartz01} is to follow the microlocal study of $G_\lambda(t) f$ on each dyadic ring in Fourier variable as in \cite{CDGG}. Here, in order to have a self-contained text which is short enough, we present the complete proof of Lemma \ref{le:Strichartz01}, using a simplified method as in \cite{VSN}. We write
\begin{align*}
	\Psi_{r,R}(D) G_{\lambda} \left( t \right) f(x) = \int_{\R^3_y} f(y) \int_{\R^3_\xi} e^{\frac{t}{\eps} \lambda(\xi) + i(x-y)\cdot \xi} \Psi_{r,R}(\xi) d\xi d y= K_\lambda \pare{\frac{t}{\eps},\cdot} \ast f (x),
\end{align*}
where
\begin{equation}
	\label{eq:Kljk} 
	K_\lambda \pare{\tau,x} = \int_{\R^3_\xi} e^{\tau \lambda(\xi) + ix\cdot \xi} \Psi_{r,R}(\xi) d\xi.
\end{equation}
We remark that the invariance of $K_\lambda$ by rotation in the plane $\R^2_{\xi_h}$ allows to restrict the study to the case $x_2 = 0$. Indeed, if $x_2 \neq 0$, we can perform a rotation of angle $\theta$, with $\cot \theta = \frac{x_1}{x_2}$ to suppress the second component of $x$. Let
\begin{align*}
	A(\xi) &= \sqrt{1 + \bgam^2 \abs{\xi}^2 + 2\bgam\xi_3}\\
	B(\xi) &= \sqrt{1 + \bgam^2 \abs{\xi}^2 - 2\bgam\xi_3}\\
	\lambda(\xi) &= \pm \frac{i}{2} \pare{A(\xi) \pm B(\xi)}\\
	a_\lambda(\xi) &= \dd_{\xi_2} \lambda(\xi) = \pm \frac{i \bgam^2 \xi_2}{2} \pare{\frac{1}{A(\xi)} \pm \frac{1}{B(\xi)}}.
\end{align*}
We consider the operator
\begin{equation}
	\label{eq:Llambda}
	\mathcal{L}_\lambda \stackrel{\text{\tiny def}}{=} \frac{1}{1 + \tau a_\lambda^2} \pare{\text{Id} - ia_\lambda \dd_{\xi_2}}.
\end{equation}
Direct calculations give
\begin{equation*}
	\mathcal{L}_\lambda \pare{e^{\tau \lambda(\xi) + ix\cdot\xi}} = e^{\tau \lambda(\xi) + ix\cdot\xi},
\end{equation*}
thus, an integration by parts gives
\begin{equation*}
	K_\lambda(\tau,x) = \int_{\R^3_\xi} e^{\tau \lambda(\xi) + ix\cdot \xi} \, {}^T\!\!\mathcal{L}_\lambda \pare{\Psi_{r,R}(\xi)} d\xi,
\end{equation*}
where
\begin{equation}
	\label{eq:TLlambda}
	{}^T\!\!\mathcal{L}_\lambda \pare{\Psi_{r,R}(\xi)} = \pint{\frac{1}{1 + \tau a_\lambda^2} + i\pare{\dd_{\xi_2} a_\lambda} \frac{1 - \tau a_\lambda^2}{\pare{1 + \tau a_\lambda^2}^2}} \Psi_{r,R}(\xi) + \frac{ia_\lambda}{1 + \tau a_\lambda^2} \dd_{\xi_2} \Psi_{r,R}(\xi).
\end{equation}

\bigskip

\begin{lemma}
	\label{le:TLlambda}
	Let $0 < r \ll 1 \ll R$. There exists a constant $C > 0$ such that
	\begin{equation*}
		\abs{{}^T\!\!\mathcal{L}_\lambda \pare{\Psi_{r,R}(\xi)}} \leqslant \frac{C r^{-1}}{1 + R^{-6} r^2 \tau \xi_2^2}.
	\end{equation*}
\end{lemma}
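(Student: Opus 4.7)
The plan is to prove this pointwise estimate by a direct computation: insert the explicit formula \eqref{eq:TLlambda} for ${}^T\!\mathcal{L}_\lambda(\Psi_{r,R})$ and control each of its three summands uniformly on $\mathrm{supp}\,\Psi_{r,R}\subset\mathcal{C}_{r/2,2R}$.

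The starting point is the identification $A^{2}=(\bgam\xi_{3}+1)^{2}+\bgam^{2}|\xi_{h}|^{2}$ and $B^{2}=(\bgam\xi_{3}-1)^{2}+\bgam^{2}|\xi_{h}|^{2}$. Combined with the support conditions $|\xi_{h}|\geqslant r/2$ and $|\xi|\leqslant 2R$, these identities immediately yield the two-sided bounds
\begin{equation*}
c\,r \;\lesssim\; A,B \;\lesssim\; R,\qquad AB\;\gtrsim\; r^{2},\qquad A+B\;\gtrsim\; 1,
\end{equation*}
which I will use repeatedly in what follows.

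The pivotal step is a lower bound on $|a_{\lambda}|$. For the eigenvalues $\lambda=\pm\tfrac{i}{2}(A+B)$ one reads off directly $|a_{\lambda}|\gtrsim |\xi_{2}|/R$. For the more delicate case $\lambda=\pm\tfrac{i}{2}(A-B)$, I would first rewrite
\begin{equation*}
\frac{1}{A}-\frac{1}{B}\;=\;\frac{B^{2}-A^{2}}{AB(A+B)}\;=\;-\frac{4\bgam\,\xi_{3}}{AB(A+B)},
\end{equation*}
and then invoke $|\xi_{3}|\geqslant r/2$ to conclude $|a_{\lambda}|\gtrsim r\,|\xi_{2}|/R^{3}$. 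Squaring this worst bound produces precisely the quantity $R^{-6}r^{2}\xi_{2}^{2}$ featured in the denominator of the target estimate, via the chain $|1+\tau a_{\lambda}^{2}|^{-1}\lesssim (1+\tau|a_{\lambda}|^{2})^{-1}\lesssim (1+cR^{-6}r^{2}\tau\xi_{2}^{2})^{-1}$.

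The remaining ingredients of \eqref{eq:TLlambda} are then handled as follows. The derivative of the cutoff satisfies $|\partial_{\xi_{2}}\Psi_{r,R}|\lesssim r^{-1}$, the dominant contribution coming from the horizontal factor $1-\psi(|\xi_{h}|/r)$ since $r\ll R$; this is the source of the overall $r^{-1}$ in the target bound. The derivative $\partial_{\xi_{2}}a_{\lambda}$ is computed directly from the formula for $a_{\lambda}$ and estimated using the bounds from the first step. Finally, the middle factor $|1-\tau a_{\lambda}^{2}|/|1+\tau a_{\lambda}^{2}|^{2}$ enjoys the same $\lesssim (1+\tau|a_{\lambda}|^{2})^{-1}$ bound as the leading term. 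Combining these estimates term by term gives the announced pointwise inequality.

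The main technical obstacle I anticipate is securing the sharp lower bound $|a_{\lambda}|\gtrsim r\,|\xi_{2}|/R^{3}$ in the ``$A-B$'' case, and matching it with a correct handling of $|1+\tau a_\lambda^{2}|^{-1}$: any slack in these estimates would spoil the specific scaling $R^{-6}r^{2}$ in the denominator and ultimately weaken the dispersive decay in Lemma~\ref{le:Strichartz01}.
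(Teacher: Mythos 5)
Your proposal follows the paper's proof essentially verbatim: the same two-sided bounds $C_1 r \leqslant A,B \leqslant C_2 R$ on $\mathrm{supp}\,\Psi_{r,R}$, the same identity $\frac{1}{A}-\frac{1}{B} = -4\bgam\,\xi_3/\bigl(AB(A+B)\bigr)$ combined with $\abs{\xi_3}\geqslant r/2$ to get $\abs{a_\lambda}\gtrsim r R^{-3}\abs{\xi_2}$, and the same term-by-term treatment of the three summands in \eqref{eq:TLlambda}. The only detail worth flagging is that the bound $\abs{\dd_{\xi_2} a_\lambda}\lesssim r^{-1}$ does not follow from the bounds $cr\lesssim A,B\lesssim R$ of your first step alone — the term $\tfrac{1}{2}\bgam^4\xi_2^2\,(A^{-3}\pm B^{-3})$ must be controlled using the additional observation $A,B\geqslant \bgam\abs{\xi_2}$ (so that $\xi_2^2 A^{-3}\leqslant \bgam^{-2}A^{-1}\lesssim r^{-1}$), which is exactly what the paper does.
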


\begin{proof}

Since $\text{supp } \Psi_{r,R} \subset \mathcal{C}_{\frac{r}{2},2R}$, there exist constants $C_1, C_2 > 0$ such that
\begin{align*}
	&C_1 r \leqslant A(\xi) = \sqrt{\bgam^2 \abs{\xi_h}^2 + \pare{1 + \bgam \xi_3}^2} \leqslant C_2 R\\
	&C_1 r \leqslant B(\xi) = \sqrt{\bgam^2 \abs{\xi_h}^2 + \pare{1 - \bgam \xi_3}^2} \leqslant C_2 R.
\end{align*}
Remark that
\begin{equation*}
	\abs{\frac{1}{A(\xi)} - \frac{1}{B(\xi)}} = \frac{\abs{A(\xi) - B(\xi)}}{A(\xi) + B(\xi)} = \frac{\abs{A(\xi)^2 - B(\xi)^2}}{A(\xi)B(\xi)\pare{A(\xi) + B(\xi)}} = \frac{4 \bgam \abs{\xi_3}}{A(\xi)B(\xi)\pare{A(\xi) + B(\xi)}}.
\end{equation*}
Then, we can choose $C_1$ and $C_2$ such that
\begin{equation}
	\label{eq:ajzeta}
	C_1 R^{-3} r \abs{\xi_2} \leqslant \abs{a_\lambda(\xi)} \leqslant C_2 r^{-1}.
\end{equation}
Differentiating $a_\lambda$ with respect to $\xi_2$, we get
\begin{align*}
	\dd_{\xi_2} a_\lambda(\xi) &= \pm \frac{i\bgam^2}2 \pare{\frac{1}{A(\xi)} \pm \frac{1}{B(\xi)}} - \frac{i\bgam^4\xi_2^2}2 \pare{\frac{1}{A(\xi)^3} \pm \frac{1}{B(\xi)^3}}.
\end{align*}
Since $A(\xi), B(\xi) \geq \bgam \abs{\xi_2}$, by adjusting $C_2$, we can say that
\begin{equation}
	\label{eq:d2ajzeta}
	\abs{\dd_{\xi_2} a_\lambda(\xi)} \leqslant C_2 r^{-1}.
\end{equation}

\bigskip

So, Estimates \eqref{eq:ajzeta} and \eqref{eq:d2ajzeta} yield
\begin{gather*}
	\abs{\frac{\Psi_{r,R}(\xi)}{1 + \tau a_\lambda^2}} \leqslant \frac{C}{1 + R^{-6} r^2 \tau \xi_2^2}\\
	\abs{i\pare{\dd_{\xi_2} a_\lambda} \frac{1 - \tau a_\lambda^2}{\pare{1 + \tau a_\lambda^2}^2} \Psi_{r,R}(\xi)} \leqslant \frac{C r^{-1}}{1 + R^{-6} r^2 \tau \xi_2^2}\\
	\abs{\frac{ia_\lambda}{1 + \tau a_\lambda^2} \dd_{\xi_2} \Psi_{r,R}(\xi)} \leqslant \frac{C r^{-1}}{1 + R^{-6} r^2 \tau \xi_2^2},
\end{gather*}
which imply
\begin{equation*}
	\abs{{}^T\!\!\mathcal{L}_\lambda \pare{\Psi_{r,R}(\xi)}} \leqslant \frac{C r^{-1}}{1 + R^{-6} r^2 \tau \xi_2^2}.
\end{equation*}

\end{proof}

\begin{lemma}
	\label{eq:tildeKernel}
	For any $\tau > 0$,  
	\begin{equation*}
		\norm{K_\lambda(\tau,\cdot)}_{L^\infty_x} \leqslant C R^3 r^{-2} \tau^{-\frac{1}{2}}.
	\end{equation*}
\end{lemma}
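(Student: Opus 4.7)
The plan is to estimate the kernel $K_\lambda(\tau,x)$ uniformly in $x \in \RR^3$ by combining the integration-by-parts identity already established above the statement with the pointwise amplitude bound of Lemma \ref{le:TLlambda}, and then reducing the remaining integral to a one-dimensional one in $\xi_2$ that will be controlled by a change of variables. First I would recall that, by rotational invariance of $\lambda$ and $\Psi_{r,R}$ in the $\xi_h$-variable, we may perform a rotation in the horizontal plane so that $x_2 = 0$, which is precisely the configuration for which the operator $\mathcal{L}_\lambda$ was constructed.

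Next I would start from the identity
\begin{equation*}
K_\lambda(\tau,x) = \int_{\RR^3_\xi} e^{\tau\lambda(\xi)+ix\cdot\xi}\,{}^T\!\!\mathcal{L}_\lambda\!\pare{\Psi_{r,R}(\xi)}\,d\xi,
\end{equation*}
take moduli (noting that $|e^{\tau\lambda(\xi)+ix\cdot\xi}|=1$, since $\lambda$ is purely imaginary), and plug in the bound of Lemma \ref{le:TLlambda} to obtain
\begin{equation*}
\abs{K_\lambda(\tau,x)} \leqslant Cr^{-1}\int_{\mathcal{C}_{r/2,2R}}\frac{d\xi}{1+R^{-6}r^2\tau\xi_2^2}.
\end{equation*}
The integrand depends only on $\xi_2$, so by Fubini I would integrate first in $(\xi_1,\xi_3)$, bounding the transverse area at each fixed $\xi_2$ by the cross-section of the ball $\set{|\xi|\leqslant 2R}$, which is a disk of area $\pi(4R^2-\xi_2^2)\leqslant CR^{2}$.

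It then remains to handle the one-dimensional integral $\int_{|\xi_2|\leqslant 2R}\frac{d\xi_2}{1+R^{-6}r^2\tau\xi_2^2}$, where the oscillatory gain is extracted via the change of variables $u = R^{-3}r\tau^{1/2}\xi_2$. The Jacobian produces the factor $R^{3}r^{-1}\tau^{-1/2}$, while the resulting integral is dominated by $\int_{\RR}(1+u^2)^{-1}du = \pi$. Combining these yields a bound of the desired form $CR^{a}r^{-2}\tau^{-1/2}$, as claimed.

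The only real obstacle is a bookkeeping one: to match the precise exponent of $R$ in the statement, one must use the cylindrical geometry of $\mathcal{C}_{r/2,2R}$ sharply (for instance, exploiting that the bound in Lemma \ref{le:TLlambda} is saturated only where the derivatives of $\Psi_{r,R}$ are active, which forces $|\xi|$ to sit near $R$ or $|\xi_h|$ to sit near $r$) rather than the naive estimate $|\xi_1|,|\xi_3|\leqslant 2R$ on the whole support. Apart from this accounting, every step is mechanical: one integration by parts has already been carried out, the amplitude estimate is in hand, and the $\tau^{-1/2}$ decay is an immediate consequence of the change of variables in a single variable.
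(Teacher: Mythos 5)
Your argument follows exactly the paper's route: start from the integrated-by-parts representation of $K_\lambda$, take absolute values using that $\lambda$ is purely imaginary, insert the amplitude bound of Lemma \ref{le:TLlambda}, and extract the $\tau^{-1/2}$ decay by the change of variables $\zeta = R^{-3}r\tau^{1/2}\xi_2$. The one point of divergence is the bookkeeping you flag yourself, and you are right to flag it: integrating the bound $\frac{Cr^{-1}}{1+R^{-6}r^2\tau\xi_2^2}$ over the transverse variables $(\xi_1,\xi_3)$ on $\mathrm{supp}\,\Psi_{r,R}\subset\set{\abs{\xi}\leqslant 2R}$ produces a factor $CR^2$, so the honest output of this computation is $\norm{K_\lambda(\tau,\cdot)}_{L^\infty_x}\leqslant CR^{5}r^{-2}\tau^{-1/2}$, not $CR^{3}r^{-2}\tau^{-1/2}$. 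The paper's own proof passes directly from the three-dimensional integral to $Cr^{-1}\int_{\RR}\frac{d\xi_2}{1+R^{-6}r^2\tau\xi_2^2}$, i.e.\ it silently suppresses precisely that transverse $R^2$; as written this step is not justified, so the discrepancy you found is a defect of the lemma's proof (and arguably of its statement), not of your argument.

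Two remarks on how you propose to close the gap. First, the suggested sharpening --- localizing to where the derivatives of $\Psi_{r,R}$ are active --- does not work: in \eqref{eq:TLlambda} the term $i\pare{\dd_{\xi_2}a_\lambda}\frac{1-\tau a_\lambda^2}{\pare{1+\tau a_\lambda^2}^2}\Psi_{r,R}(\xi)$ carries $\Psi_{r,R}$ itself (not its gradient), and the bound $\abs{\dd_{\xi_2}a_\lambda}\leqslant Cr^{-1}$ is all that Lemma \ref{le:TLlambda} provides on the whole support, so you cannot confine the $r^{-1}$ loss to a thin shell. Second, you do not need to: the exponent of $R$ is immaterial downstream. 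The bound $CR^{5}r^{-2}\tau^{-1/2}$ propagates through Lemma \ref{le:Strichartz03} and Theorem \ref{th:Strichartz04} with $R^{3}$ replaced by $R^{5}$ (equivalently, $r^{-4}\eps$ replaced by $R^{4}r^{-4}\eps$ in the crossover time for $M$), and since $R=\eps^{-\beta}$ with $\beta>0$ free to be taken small, every subsequent smallness condition such as \eqref{eq:Conditions01} is satisfied after adjusting the numerical constraint on $\beta$. So state and prove the lemma with $R^{5}$, or supply the missing justification for discarding the transverse integration; either way your proof is sound, and more careful than the one in the paper.
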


\begin{proof}
	We recall that 
	\begin{equation*}
		K_\lambda(\tau,x) = \int_{\R^3_\xi} e^{\tau \lambda(\xi) + ix\cdot \xi} \, {}^T\!\!\mathcal{L}_\lambda \pare{\Psi_{r,R}(\xi)} d\xi,
	\end{equation*}
	Then, using Lemma \ref{le:TLlambda} and a change of variable with respect to $\xi_2$, we obtain
	\begin{align*}
		\norm{K_\lambda(\tau,\cdot)}_{L^\infty_x} \leqslant C r^{-1} \int_{\RR} \frac{d \xi_2}{1 + R^{-6} r^2 \tau \xi_2^2} \leqslant C R^3 r^{-2} \tau^{-\frac{1}{2}} \int_{\RR} \frac{d \zeta}{1 + \zeta^2} \leqslant C R^3 r^{-2} \tau^{-\frac{1}{2}},
	\end{align*}
	where $C$ is a generic positive constant that can be ajusted from line to line.
\end{proof}

\noindent \textit{Proof of Lemma \ref{le:Strichartz01}.} We recall that
\begin{equation*}
	\Psi_{r,R}(D) G_{\lambda} \left( t \right) f(x) = K_\lambda \pare{\frac{t}{\eps},\cdot} \ast f (x).
\end{equation*}
Using Young's inequality, we obtain
\begin{equation*}
	\norm{\Psi_{r,R}(D) G_{\lambda} \left( t \right) f}_{L^\infty\pare{\R^3}} \leqslant C \norm{K_\lambda\pare{\frac{t}{\eps},\cdot}}_{L^\infty_x} \norm{f}_{L^1_x}
	\leqslant C R^3 r^{-2} \pare{\frac{\eps}{t}}^{\frac{1}{2}} \norm{f}_{L^1\pare{\R^3}}.
\end{equation*}
$\hfill \square$

\begin{rem}
	\label{re:Strichartz02}
	It is clear that the estimates in Lemma \ref{le:Strichartz01} are not optimal for $t \leqslant \eps$. Indeed, for $t \leqslant \eps$, since $\text{supp } \Psi_{r,R} \subset \mathcal{C}_{\frac{r}{2},2R} \subset \mathcal{B}_3(0,2R)$, using Bernstein lemma \ref{lemma:Bernstein}, we can simply bound
	\begin{align}
		\label{eq:lemma1_smallt}
		\norm{\Psi_{r,R}(D) G_\lambda(t) f}_{L^\infty\pare{\R^3}} \leqslant C R^3 \norm{f}_{L^1\pare{\R^3}}.
	\end{align}
\end{rem}

\bigskip

\begin{lemma}
	\label{le:Strichartz03}
	For any $q \in [2,+\infty]$ and $\overline{q} \in [1,2]$ such that $\frac{1}{q} + \frac{1}{\overline{q}} = 1$, we have
	\begin{equation*}
		\norm{\Psi_{r,R}(D) G_\lambda(t) f}_{L^q\pare{\R^3}} \leqslant C \pint{R^3 \min \set{1, r^{-2} \pare{\frac{\eps}{t}}^{\frac{1}{2}}}}^{1-\frac{2}{q}} \norm{f}_{L^{\overline{q}}\pare{\R^3}}.
	\end{equation*}
\end{lemma}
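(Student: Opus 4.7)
The plan is a standard Riesz--Thorin interpolation between the endpoints $q=\infty$ and $q=2$, using the dispersive estimate of Lemma~\ref{le:Strichartz01} on one side and the $L^2$-conservation coming from the skew-symmetry of $\widehat{\B}$ on the other.

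First, I would combine Lemma~\ref{le:Strichartz01} with Remark~\ref{re:Strichartz02}: for every $t > 0$,
\begin{equation*}
    \norm{\Psi_{r,R}(D)\, G_\lambda(t) f}_{L^\infty(\R^3)} \leqslant C\, R^3 \min\!\set{1,\ r^{-2}\pare{\tfrac{\eps}{t}}^{1/2}} \norm{f}_{L^1(\R^3)}.
\end{equation*}
Indeed, for $t \geqslant \eps$ (equivalently, when the dispersive factor is $\leqslant 1$) Lemma~\ref{le:Strichartz01} applies directly, while for $t \leqslant \eps$ the Bernstein bound in Remark~\ref{re:Strichartz02} gives the $R^3$-bound; taking the minimum covers both regimes.

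Next, I would show the trivial $L^2$ estimate
\begin{equation*}
    \norm{\Psi_{r,R}(D)\, G_\lambda(t) f}_{L^2(\R^3)} \leqslant \norm{f}_{L^2(\R^3)}.
\end{equation*}
This follows from Plancherel and the pointwise-in-$\xi$ identity $\abs{e^{(t/\eps)\lambda(\xi)}} = 1$ (the eigenvalues $\lambda(\xi)$ are purely imaginary, as computed in \eqref{eigenvalues}, which in turn reflects the skew-symmetry of $\widehat{\B}$ noted in the remark after \eqref{eq:WCEshortsym}), together with $\abs{\Psi_{r,R}(\xi)} \leqslant 1$.

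Finally, I would apply the Riesz--Thorin interpolation theorem to the linear operator $T_t := \Psi_{r,R}(D) \, G_\lambda(t)$, viewed as acting between $(L^1, L^\infty)$ and $(L^2, L^2)$, with interpolation parameter $\theta = 1 - \tfrac{2}{q}$. The duality exponents $(q, \overline{q})$ satisfy $\tfrac{1}{\overline{q}} = \theta + \tfrac{1-\theta}{2}$ and $\tfrac{1}{q} = \tfrac{1-\theta}{2}$, and the resulting operator norm is bounded by the geometric mean
\begin{equation*}
    \pare{C R^3 \min\set{1, r^{-2}(\eps/t)^{1/2}}}^{\theta} \cdot 1^{1-\theta},
\end{equation*}
which is exactly the right-hand side of the claimed inequality. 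There is no real obstacle here; the only care needed is to verify that the $L^\infty$ bound is uniform in $t > 0$ in the form stated (so that interpolation yields a $t$-dependent constant of the right shape), which is precisely why the $\min\{1,\cdots\}$ formulation is used.
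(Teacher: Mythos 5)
Your proposal is correct and follows exactly the paper's own argument: the $L^1\to L^\infty$ dispersive bound from Lemma \ref{le:Strichartz01} combined with Remark \ref{re:Strichartz02}, the trivial $L^2\to L^2$ bound from Plancherel and the purely imaginary eigenvalues, and Riesz--Thorin interpolation with $\theta = 1-\frac{2}{q}$. The exponent bookkeeping checks out, so nothing further is needed.
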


\begin{proof}
	We already prove that
	\begin{equation*}
		\norm{\Psi_{r,R}(D) G_\lambda(t) f}_{L^\infty\pare{\R^3}} \leqslant C R^3 \min \set{1, r^{-2} \pare{\frac{\eps}{t}}^{\frac{1}{2}}} \norm{f}_{L^1\pare{\R^3}}
	\end{equation*}
	The definition of $\Psi_{r,R}(D) G_\lambda(t)$ implies that
	\begin{equation*}
		\norm{\Psi_{r,R}(D) G_\lambda(t) f}_{L^2\pare{\R^3}} \leqslant C \norm{f}_{L^2\pare{\R^3}}.
	\end{equation*}
	Since the point $\pare{\frac{1}{q},\frac{1}{\overline{q}}}$ belongs to the line segment $\pint{\pare{0,1},\pare{\frac{1}{2},\frac{1}{2}}}$, the Riesz-Thorin theorem yields
	\begin{equation*}
		\norm{\Psi_{r,R}(D) G_\lambda(t) f}_{L^q\pare{\R^3}} \leqslant C \pint{R^3 \min \set{1, r^{-2} \pare{\frac{\eps}{t}}^{\frac{1}{2}}}}^{1-\frac{2}{q}} \norm{f}_{L^{\overline{q}}\pare{\R^3}}.
	\end{equation*} 
\end{proof}

The following theorem gives Strichartz estimates of $\bU$ in the direction of each eigenvector of the operator $\widehat{\mathcal{B}}$.

\begin{theorem}
	\label{th:Strichartz04}
	Let $q \in [2,+\infty]$ and $p \geqslant \frac{4q}{q-2}$. Then,
	\begin{equation*}
		\norm{\Psi_{r,R}(D) G_\lambda(t) f}_{L^p_t\pare{L^q\pare{\R^3}}} \leqslant C R^{\frac{3}{2} - \frac{3}{q}} r^{-\frac{4}{p}} \eps^{\frac{1}{p}} \norm{f}_{L^2\pare{\R^3}}.
	\end{equation*}
\end{theorem}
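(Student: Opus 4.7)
My plan is to combine a $TT^\ast$ duality argument with the refined Young inequality of Theorem \ref{th:HLS}. Define the linear operator $T : L^2(\R^3) \to L^p_t(\R_+; L^q(\R^3))$ by $(Tf)(t) := \Psi_{r,R}(D) G_\lambda(t) f$. Since the eigenvalue $\lambda(\xi)$ is purely imaginary, one has $G_\lambda(t)^\ast = G_\lambda(-t)$; combined with the self-adjointness of the real and even multiplier $\Psi_{r,R}(D)$, this yields
\begin{equation*}
	TT^\ast g(t) = \int_0^\infty \Psi_{r,R}^2(D) \, G_\lambda(t-s) \, g(s,\cdot)\, ds.
\end{equation*}
The symbol $\Psi_{r,R}^2$ has Fourier support in $\mathcal{C}_{r/2,2R}$ and its first derivatives obey the same $r^{-1}$ bound as those of $\Psi_{r,R}$, so the proof of Lemma \ref{le:Strichartz01} (together with Remark \ref{re:Strichartz02}) applies verbatim to $\Psi_{r,R}^2(D)G_\lambda(\tau)$. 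Interpolating the resulting $L^1\to L^\infty$ bound with the trivial $L^2\to L^2$ energy estimate via Riesz--Thorin, exactly as in Lemma \ref{le:Strichartz03}, one obtains, with $\sigma := 1 - 2/q$,
\begin{equation*}
	\norm{TT^\ast g(t)}_{L^q_x} \leqslant \bigl(K \ast \norm{g(\cdot)}_{L^{q'}_x}\bigr)(t), \qquad K(\tau) := C \bigl[R^3 \min\set{1, r^{-2}(\eps/\tau)^{1/2}}\bigr]^{\sigma}.
\end{equation*}

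The heart of the argument is then to estimate $K$ in a weak Lebesgue space in time. Note that $K$ is continuous, constant equal to $CR^{3\sigma}$ on $(0, \tau_0]$ with $\tau_0 := \eps r^{-4}$, and equal to $CR^{3\sigma} r^{-2\sigma} \eps^{\sigma/2}\, \tau^{-\sigma/2}$ for $\tau > \tau_0$. A direct computation of the distribution function $\mu\set{K > \lambda}$, which reduces to inverting the power law on the tail, yields for every $m \geqslant 2/\sigma$
\begin{equation*}
	\norm{K}_{L^{m,\infty}(\R_+)} \leqslant C R^{3\sigma} r^{-4/m} \eps^{1/m}.
\end{equation*}
The constraint $m \geqslant 2/\sigma$ is precisely what is needed for the supremum defining the weak norm to be attained at $\lambda \sim R^{3\sigma}$ rather than diverging as $\lambda \to 0$, and the choice $m = p/2$ translates this into the announced condition $p \geqslant 4q/(q-2)$.

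Finally, since $1/(p/2) + 1/p' = 1 + 1/p$, Theorem \ref{th:HLS} applied to the time-convolution with kernel $K$ gives
\begin{equation*}
	\norm{TT^\ast g}_{L^p_t L^q_x} \leqslant C \norm{K}_{L^{p/2,\infty}} \norm{g}_{L^{p'}_t L^{q'}_x} \leqslant C R^{3\sigma} r^{-8/p}\eps^{2/p} \norm{g}_{L^{p'}_t L^{q'}_x}.
\end{equation*}
The $TT^\ast$ identity $\norm{T}_{L^2 \to L^p_t L^q_x}^2 = \norm{TT^\ast}_{L^{p'}_t L^{q'}_x \to L^p_t L^q_x}$, followed by taking a square root, yields
\begin{equation*}
	\norm{Tf}_{L^p_t L^q_x} \leqslant C R^{3\sigma/2} r^{-4/p} \eps^{1/p} \norm{f}_{L^2},
\end{equation*}
and $3\sigma/2 = 3/2 - 3/q$ is exactly the exponent of $R$ in the target estimate. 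The main obstacle I anticipate is the bookkeeping in the weak-$L^{p/2}$ estimate of $K$: one must simultaneously use the constant plateau on $(0,\tau_0]$ (to control the distribution function for $\lambda$ close to the maximum $R^{3\sigma}$) and the $\tau^{-\sigma/2}$ decay on $(\tau_0,\infty)$ (for small $\lambda$), and it is the matching of the two regimes at the transition point $\tau_0 = \eps r^{-4}$ that both produces the scaling $r^{-4/m}\eps^{1/m}$ and pins down the sharp threshold $p = 4q/(q-2)$.
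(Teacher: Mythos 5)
Your proposal is correct and follows essentially the same route as the paper: a $TT^\ast$ duality argument reducing the bound to a time-convolution of $\norm{g(\cdot)}_{L^{\overline q}_x}$ against the dispersive kernel furnished by Lemma \ref{le:Strichartz03}, closed by a Young-type inequality in time. The only (harmless) difference is that you treat all $p \geqslant \frac{4q}{q-2}$ uniformly through the weak norm $\norm{K}_{L^{p/2,\infty}_t} \lesssim R^{3\pare{1-\frac{2}{q}}}\pare{r^{-4}\eps}^{\frac{2}{p}}$ and Theorem \ref{th:HLS}, whereas the paper uses the classical Young inequality when $p > \frac{4q}{q-2}$ and reserves Theorem \ref{th:HLS} for the endpoint; your weak-norm computation and the resulting exponents agree with the paper's.
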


\begin{proof}
	Following the ideas of \cite{CDGG} and \cite{CDGG2}, we will apply the so-called $TT^*$ method, which consist in an argument of duality. Let $\overline{p}$ and $\overline{q}$ such that 
	\begin{equation*}
		\frac{1}{p} + \frac{1}{\overline{p}} = \frac{1}{q} + \frac{1}{\overline{q}} = 1,
	\end{equation*}
	and
	\begin{equation*}
		\Xi = \set{\varphi \in \mathcal{D}\pare{\mathbb{R}_+ \times \R^3} \;\vert\; \norm{\varphi}_{L^{\overline{p}}_t\pare{L^{\overline{q}}_x}} \leqslant 1}.
	\end{equation*}
	Considering $\Phi = \Psi_{r,R}(D) \varphi$ and using Plancherel theorem and Cauchy-Schwarz inequality, we have
	\begin{align*}
		\norm{\Psi_{r,R}(D) G_\lambda(t) f}_{L^p_t\pare{L^q\pare{\R^3}}} &= \sup_{\varphi \in \Xi} \int_{\RR_+} \psca{\Psi_{r,R}(D) G_\lambda(t) f \;,\; \varphi}_{L^2_x} d t\\
		&= \pare{2\pi}^{-3} \sup_{\varphi \in \Xi} \int_{\RR_+ \times \RR^3_\xi} \widehat{f}\pare{\xi} \widehat{\Phi}\pare{t,\xi} e^{\frac{t}{\eps} \lambda(\xi)} dt\, d\xi\\
		&\leqslant \pare{2\pi}^{-3} \sup_{\varphi \in \Xi} \norm{f}_{L^2} \norm{\int_{\RR_+} \widehat{\Phi}\pare{t,\xi} e^{\frac{t}{\eps} \lambda(\xi)} d t}_{L^2_\xi}.
	\end{align*}

	It remains to estimate
	\begin{equation*}
		I = \norm{\int_{\RR_+} \widehat{\Phi}\pare{t,\xi} e^{\frac{t}{\eps} \lambda(\xi)} d t}_{L^2_\xi}.
	\end{equation*}
	Recalling that $\lambda(\xi)$ is an imaginary number, using several times Fubini's theorem, Plancherel theorem and H\"older's inequality, we have
	\begin{align*}
		I^2 &= \psca{\int_{\RR_+} \widehat{\Phi}(t,\xi) e^{\frac{t}{\eps}\lambda(\xi)} d t, \int_{\RR_+} \overline{\widehat{\Phi}(s,\xi) e^{\frac{s}{\eps}\lambda(\xi)}} d s}_{L^2}\\
		&= \int_{\RR^3_\xi} \pare{\int_{\RR_+} \widehat{\Phi}(t,\xi) e^{\frac{t}{\eps} \lambda(\xi)} d t} \pare{\int_{\RR_+} \overline{\widehat{\Phi}(s,\xi)} e^{-\frac{s}{\eps} \lambda(\xi)} d s} d\xi\\
		&= \int_{\RR^3_\xi} \pare{\int_{\pare{\RR_+}^2} \widehat{\Phi}(t,\xi) \overline{\widehat{\Phi}(s,\xi)} e^{-\frac{t-s}{\eps} \lambda(\xi)} ds\, dt} d\xi\\
		&= \int_{\pare{\RR_+}^2} \int_{\RR^3_\xi} \big(\Psi_{r,R}(D) \varphi(s,-x)\big) \big(\Psi_{r,R}(D) G_\lambda(t-s) \varphi(t,x)\big) dx\, dt\, ds\\
		&\leqslant C \int_{\pare{\RR_+}^2} \norm{\varphi(s)}_{L^{\overline{q}}_x} \norm{\Psi_{r,R}(D) G_\lambda(t-s)\varphi(t)}_{L^q_x} dt\, ds. 
	\end{align*}
	Next, using Lemma \ref{le:Strichartz03}, H\"older's inequality, we get
	\begin{align*}
		I^2 &\leqslant C \int_{\pare{\RR_+}^2} \norm{\varphi(s)}_{L^{\overline{q}}_x} \norm{\varphi(t)}_{L^{\overline{q}}_x} \pint{R^3 \min \set{1, \frac{r^{-2} \eps^{-\frac{1}2}}{\abs{t-s}^{\frac{1}2}}}}^{1-\frac{2}q} ds\, dt\\
		&\leqslant C \norm{\varphi}_{L^{\overline{p}}_t \pare{L^{\overline{q}}_x}} \norm{\int_{\RR_+} \norm{\varphi(s)}_{L^{\overline{q}}_x} \pint{R^3 \min \set{1, \frac{r^{-2} \eps^{-\frac{1}2}}{\abs{t-s}^{\frac{1}2}}}}^{1-\frac{2}q} ds}_{L^p_t}\\
		&\leqslant C \norm{\varphi}_{L^{\overline{p}}_t \pare{L^{\overline{q}}_x}} R^{3\pare{1-\frac{2}{q}}} \norm{\norm{\varphi(\cdot)}_{L^{\overline{q}}_x} \ast_t M(\cdot)}_{L^p_t},
	\end{align*}
	where $$M(t) = \pint{\min \set{1, \frac{r^{-2} \eps^{-\frac{1}2}}{\abs{t}^{\frac{1}2}}}}^{1-\frac{2}q}.$$	
	
	If $(p,q)=(+\infty,2)$, Theorem \ref{th:Strichartz04} is obvious from the definition of $\Psi_{r,R}(D) G_\lambda(t)$. In the case where $q > 2$, we study two different cases
	\begin{itemize}
		\item If $p > \frac{4q}{q-2}$ then $M \in L^{\frac{p}2}_t$. For any $q > 2$,
		\begin{align*}
			\norm{M}_{L^{\frac{p}2}_t} &= \pare{\int_0^{r^{-4}\eps} dt + \int_{r^{-4}\eps}^{+\infty} \pare{\frac{r^{-4}\eps}t}^{\frac{p}2 \pare{\frac{1}2 - \frac{1}q}} dt}^{\frac{2}p}\\
			&= \pare{r^{-4}\eps}^{\frac{2}p} \pare{1 + \int_1^{+\infty} \pare{\frac{1}{\tau}}^{\frac{p}2 \pare{\frac{1}2 - \frac{1}q}} d\tau}^{\frac{2}p} \leqslant C \pare{r^{-4}\eps}^{\frac{2}p}.
		\end{align*}
		Thus, using the classical Young's inequality, we obtain
		\begin{equation}
		 	I^2 \leqslant C R^{3\pare{1-\frac{2}{q}}} \norm{\varphi}_{L^{\overline{p}}_t \pare{L^{\overline{q}}_x}}^2 \norm{M}_{L^{\frac{p}2}_t} \leqslant C R^{3\pare{1-\frac{2}{q}}} \pare{r^{-4}\eps}^{\frac{2}p} \norm{\varphi}_{L^{\overline{p}}_t \pare{L^{\overline{q}}_x}}^2.
		 \end{equation}	
	 
		 \item If $p = \frac{4q}{q-2}$ then Young's inequality does not work anymore because $M \notin L^{\frac{p}2}_t$. However, it is well known that $\abs{t}^{-\frac{2}{p}}$ belongs to $L^{\frac{p}2,\infty}\pare{\R}$, so $$M(t) = \pint{\min \set{1, \pare{r^{-4}\eps}^{\frac{2}p} \abs{t}^{-\frac{2}{p}}}} \in L^{\frac{p}2,\infty}\pare{\RR}$$ and 
		 $$\norm{M}_{L^{\frac{p}2,\infty}_t} \sim \norm{\abs{\cdot}^{-\frac{2}{p}}}_{L^{\frac{p}2,\infty}_t}\pare{r^{-4}\eps}^{\frac{2}p}.$$
		 Applying Theorem \ref{th:HLS}, we also get
		 \begin{equation}
			 \label{eq:where to apply HLS} I^2 \leqslant C R^{3\pare{1-\frac{2}{q}}} \norm{\varphi}_{L^{\overline{p}}_t \pare{L^{\overline{q}}_x}}^2 \norm{M}_{L^{\frac{p}2,\infty}_t} \leqslant C R^{3\pare{1-\frac{2}{q}}} \pare{r^{-4}\eps}^{\frac{2}p} \norm{\varphi}_{L^{\overline{p}}_t \pare{L^{\overline{q}}_x}}^2.
		 \end{equation}
	\end{itemize}
	
	We deduce that,
	\begin{equation*}
		I \leqslant C R^{\frac{3}2\pare{1-\frac{2}{q}}} \pare{r^{-4} \eps}^{\frac{1}p} \norm{\varphi}_{L^{\overline{p}}_t \pare{L^{\overline{q}}_x}},
	\end{equation*}
	and, 
	\begin{equation*}
		\norm{\Psi_{r,R}(D) G_\lambda(t) f}_{L^p_t\pare{L^q_x}} \leqslant C \pare{2\pi}^{-3} R^{\frac{3}{2} - \frac{3}{q}} r^{-\frac{4}{p}} \eps^{\frac{1}{p}} \norm{f}_{L^2_x}.
	\end{equation*}
\end{proof}

\begin{rem}
	We want to make some remarks about the dispersive result in Theorem \ref{th:Strichartz04}.
	\begin{enumerate}
		\item Unlike the case of viscous fluids (see for instance \cite{CDGG}, \cite{CDGG2} or \cite{VSN}), we cannot obtain dispersive estimates for $\Psi_{r,R}(D) G_\lambda(t) f$ in an $L^1_t(L^q_x)$-norm, due to the fact that we do not have damping effect given by the viscosity terms. This is one of the main reasons why we can only obtain a longtime existence result.

		\bigskip

		\item The result of Theorem \ref{th:Strichartz04} is slightly better than the dispersive estimates obtained in \cite{Dutrifoy2} in the sense that we can treat the limit case $p = \frac{4q}{q-2}$, using Theorem \ref{th:HLS} to get \eqref{eq:where to apply HLS}. In general cases, \eqref{eq:where to apply HLS} is known as the Hardy-Littlewood-Sobolev inequality, where one uses the fact that the function $\abs{x}^{-\frac{d}{p}}$ belongs to $L^{p,\infty}\pare{\R^d}$ but not to $L^p\pare{\R^d}$.
	\end{enumerate}
\end{rem}

\bigskip

\noindent \textit{Proof of Theorem \ref{th:Strichartz}.} We recall that in the Fourier variable, the system \eqref{eq:free_wave} writes
\begin{equation}
	\tag{\ref{eq:FWFourier}}
	\left\lbrace
	\begin{aligned}
		& \partial_t \widehat{\bU} = \frac{1}{\varepsilon} \widehat{\B}\, \widehat{\bU}\\
		& \widehat{\bU}|_{t=0} = \Psi_{r,R}(\xi) \widehat{U_0},
	\end{aligned}
	\right.
\end{equation}
where
\begin{equation*}
	\widehat{\B} = 	\begin{pmatrix}
					0&1&0& -i \bgam \xi_1\\
					-1 & 0 & 0 & -i \bgam\xi_2\\
					0&0&0 & -i \bgam\xi_3\\
					-i\bgam\xi_1 & -i \bgam\xi_2 & -i \bgam\xi_3 & 0
				\end{pmatrix}.
\end{equation*}
We also recall that the eigenvalues of $\widehat{\B}$ are
\begin{equation*}
	\lee (\xi) = \epsilon_1 \frac{i}{2} \pare{\sqrt{1+ \bgam^2\abs{\xi}^2 +2 \bgam \xi_3} + \epsilon_2 \sqrt{1+ \bgam^2\abs{\xi}^2 - 2 \bgam\xi_3}},
\end{equation*}
with $\epsilon_1, \epsilon_2 \in \set{-1, 1}$. Since the support of $\Psi_{r,R}$ is included in $\mathcal{C}_{\frac{r}2,2R}$ defined as in \eqref{eq:CrR}, we can suppose that $\xi_3 \neq 0$. So $\widehat{\B}$ admits four distinct eigenvalues and since $\widehat{\B}$ is a skew-Hermitian matrix, the unit eigenvectors $\overrightarrow{V}_{\epsilon_1, \epsilon_2}(\xi)$ corresponding to the eigenvalues $\lee (\xi)$ form an orthonormal basis of $\RR^4$. Decomposing $$\widehat{U_0}(\xi) = \sum_{\epsilon_1, \epsilon_2 \in \set{-1, 1}} C_{\epsilon_1, \epsilon_2}(\xi) \, \overrightarrow{V}_{\epsilon_1, \epsilon_2}(\xi),$$ the solution of the system \eqref{eq:FWFourier} writes
\begin{equation*}
	\widehat{\bU}(t,\xi) = \sum_{\epsilon_1, \epsilon_2 \in \set{-1, 1}} \Psi_{r,R}(\xi) e^{\frac{t}{\eps} \lee(\xi)} \, C_{\epsilon_1, \epsilon_2}(\xi) \, \overrightarrow{V}_{\epsilon_1, \epsilon_2}(\xi).
\end{equation*}

Before performing the following calculations, we recall that thoughout the paper, we always use $C$ to denote generic positive constants that can change from line to line. Applying Theorem \ref{th:Strichartz04} and Plancherel theorem, we have
\begin{align*}
	\norm{\bU}_{L^p\pare{\R_+; L^q\pare{\R^3}}} &\leqslant \sum_{\epsilon_1, \epsilon_2 \in \set{-1, 1}} \norm{\mathcal{F}^{-1} \pare{\Psi_{r,R}(\xi) e^{\frac{t}{\eps} \lee(\xi)} \, C_{\epsilon_1, \epsilon_2}(\xi) \, \overrightarrow{V}_{\epsilon_1, \epsilon_2}(\xi)}}_{L^p\pare{\R_+; L^q\pare{\R^3}}}\\
	&\leqslant C R^{\frac{3}{2} - \frac{3}{q}} r^{-\frac{4}{p}} \eps^{\frac{1}{p}} \sum_{\epsilon_1, \epsilon_2 \in \set{-1, 1}} \norm{C_{\epsilon_1, \epsilon_2}(\xi) \, \overrightarrow{V}_{\epsilon_1, \epsilon_2}(\xi)}_{L^2_\xi}.
\end{align*}
Using Cauchy-Schwarz inequality and the orthogonality of $\set{\overrightarrow{V}_{\epsilon_1, \epsilon_2}(\xi)}$, we get
\begin{align*}
	\sum_{\epsilon_1, \epsilon_2 \in \set{-1, 1}} \norm{C_{\epsilon_1, \epsilon_2}(\xi) \, \overrightarrow{V}_{\epsilon_1, \epsilon_2}(\xi)}_{L^2_\xi} &= \sum_{\epsilon_1, \epsilon_2 \in \set{-1, 1}} \pare{\int_{\RR^3} \abs{C_{\epsilon_1, \epsilon_2}(\xi) \, \overrightarrow{V}_{\epsilon_1, \epsilon_2}(\xi)}^2 d\xi}^{\frac{1}2}\\
	&\leqslant \pare{\sum_{\epsilon_1, \epsilon_2 \in \set{-1, 1}} 1}^{\frac{1}2} \pare{\int_{\RR^3} \sum_{\epsilon_1, \epsilon_2 \in \set{-1, 1}} \abs{C_{\epsilon_1, \epsilon_2}(\xi) \, \overrightarrow{V}_{\epsilon_1, \epsilon_2}(\xi)}^2 d\xi}^{\frac{1}2}\\
	&= 2 \pare{\int_{\RR^3} \Big\vert\sum_{\epsilon_1, \epsilon_2 \in \set{-1, 1}} C_{\epsilon_1, \epsilon_2}(\xi) \, \overrightarrow{V}_{\epsilon_1, \epsilon_2}(\xi)\Big\vert^2 d\xi}^{\frac{1}2}\\
	&= 2 \; \Big\Vert \!\! \sum_{\epsilon_1, \epsilon_2 \in \set{-1, 1}} C_{\epsilon_1, \epsilon_2}(\xi) \, \overrightarrow{V}_{\epsilon_1, \epsilon_2}(\xi)\; \Big\Vert_{L^2_\xi}.
\end{align*}
Recall that,  
$$\widehat{U_0}(\xi) = \sum_{\epsilon_1, \epsilon_2 \in \set{-1, 1}} C_{\epsilon_1, \epsilon_2}(\xi) \, \overrightarrow{V}_{\epsilon_1, \epsilon_2}(\xi).$$ Then, using Plancherel theorem, we finally obtain
\begin{equation*}
	\norm{\bU}_{L^p\pare{\R_+; L^q\pare{\R^3}}} \leqslant C R^{\frac{3}{2} - \frac{3}{q}} r^{-\frac{4}{p}} \eps^{\frac{1}{p}} \norm{U_0}_{L^2\pare{\RR^3}}.
\end{equation*}
Theorem \ref{th:Strichartz} is then proved. \hfill $\square$

\begin{rem}
	In what follows, we will only use a particular case of \eqref{eq:Strichartz} with $(p,q) = (4,+\infty)$
	\begin{equation}
		\label{eq:Strichartz4}
		\norm{\bU}_{L^4\pare{\R_+; L^\infty \pare{\R^3}}} \leqslant C R^{\frac{3}{2}} r^{-1} \varepsilon^{\frac{1}{4}} \norm{U_0}_{\2}.
	\end{equation}
\end{rem}

\section{The nonlinear part}\label{section:HFsystem}

In this section, we decompose the local solution $U^\varepsilon= \ub$ of \eqref{weakly_compressible_E} into two parts $$U^\varepsilon=\bU + \tU,$$ where $\bU$ is the global solution of \eqref{eq:free_wave} and $\tU$ solves (locally) the system
\begin{equation}\label{hi-freq_system}
	\left\lbrace
	\begin{aligned}
		&\partial_t \tU +\frac{1}{\varepsilon}\B \tU = \begin{pmatrix}
			-u^\eps \cdot \nabla u^\eps - \bgam b^\eps \nabla b^\eps\\
			- u^\eps \nabla b^\eps- \bgam b^\eps \dive u^\varepsilon
		\end{pmatrix}\\
		&\left. \tU \right|_{t=0}= \widetilde{U}_0^\eps = \left( 1-\Prr \right) U_0^\eps.
	\end{aligned}
	\right.
\end{equation}
As proven in Section \ref{se:Strichartz}, the linear system \eqref{eq:free_wave} is globally well-posed in $L^\infty(\R_+, \Hs^4)$, for any $s \geqslant 0$, and its solution goes to zero as $\eps \to 0$ in some $ L^p \left( \R_+; L^q \right) $-norm. On the contrary, the system \eqref{hi-freq_system} is a 3D nonlinear hyperbolic system, for which we can only expect to prove a long time existence of the solutions in $\Hs$, $s > 1 + \frac{3}2$, in the sense that, there exist $T^\eps \to +\infty$ as $\eps \to 0$, such that, $\tU \in L^{\infty}\pare{[0,T^\eps],\Hs^4}$. The main goal of this section is to prove the following theorem, which gives a lower bound estimate for the lifespan of $\tU$.

\begin{theorem}
	\label{eq:hi_freq}
	Let $s> \frac{5}{2}$, $s_0>0$, $1 < p < 2$. For any $0 < \eps < 1$ and for any $U_0^\eps \in Y_{s,s_0,p}$, where $Y_{s,s_0,p}$ is defined in \eqref{eq:DefY} and \eqref{eq:normY}, there exist $T_\eps^\star > 0$ and a unique solution $\tU$ to the system \eqref{hi-freq_system}, with initial data $\widetilde{U}_0^\eps$, satisfying $\tU \in C\pare{[0,T^\star_\varepsilon], \Hs^4}$. Moreover, there exist constants $\;\overline{C} > 0$, $\alpha > 0$ and $\eps^\star > 0$ such that, for any $\eps \in ]0,\eps^\star]$,
	$$
	T^\star_\varepsilon \geqslant \frac{\overline{C}}{\mathcal{C}(U_0^\eps)\, \eps^{\alpha}},
	$$
	where $\mathcal{C}(U_0^\eps)$ is defined in \eqref{eq:C0}. In addition, we can choose $\beta > 0$ such that the asymptotic behavior of $\tU$ when $\eps \to 0$ is determined as follows
	$$
	\left\| \tU \right\|_{L^\infty\left( \left[0,T^\star_\varepsilon\right],H^s \right)}=\mathcal{O}\left( \varepsilon^{\beta s} \right).
	$$
\end{theorem}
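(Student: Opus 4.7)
My plan exploits the decomposition $U^\eps = \bU + \tU$ introduced at the start of Section \ref{section:HFsystem}. The linear part $\bU$ solves the free-wave system \eqref{eq:free_wave} globally, enjoys the conservation $\norm{\bU(t)}_{H^\sigma}\leq\norm{U_0^\eps}_{H^\sigma}$ for every $\sigma\geq 0$ (since $\B$ is skew-adjoint), and obeys the dispersive bound \eqref{eq:Strichartz4}; the nonlinear remainder $\tU$ satisfies \eqref{hi-freq_system} with the \emph{small} initial datum $\widetilde{U}_0^\eps=(1-\Prr)U_0^\eps$ controlled in Lemma \ref{le:smalll_data_Utilde}. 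The construction of $\tU\in C([0,T_\eps],\Hs^4)$ on some time interval $[0,T_\eps]$ is classical: the system is a quasilinear symmetric hyperbolic system perturbed by the first-order penalization $\tfrac{1}{\eps}\B$, which, by the remark following \eqref{eq:WCEshortsym}, satisfies $(\B F\vert F)_{L^2}=0$ and therefore disappears from every $L^2$-type energy identity, so the Friedrichs/Kato scheme applies verbatim.

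The heart of the proof is an a priori estimate for $\norm{\tU}_{L^\infty_T H^s}$. Rewriting the nonlinearity in quasilinear form $\mathcal{A}(U^\eps,D)U^\eps = A^j(U^\eps)\partial_j U^\eps = A^j(U^\eps)\partial_j\tU + A^j(U^\eps)\partial_j\bU$, applying $\Delta_q$, pairing in $L^2$ with $\Delta_q\tU$ and summing over $q$ with weights $2^{2qs}$, I use Bony's decomposition \eqref{eq:Bonydec}, the commutator estimate of Lemma \ref{commutator estimate}, the product rule \eqref{product_rule}, and the embedding $\Hs\hookrightarrow \Woi$ (valid since $s>5/2$) to obtain an inequality of the schematic form
\begin{equation*}
\frac{d}{dt}\norm{\tU}_{H^s} \leq C\bigl(1+\norm{U_0^\eps}_{H^s}\bigr)\norm{\tU}_{H^s}^2 + CR\norm{\bU}_{L^\infty}\pare{\norm{\tU}_{H^s}+\norm{U_0^\eps}_{H^s}} + CR^{1-s_0}\norm{U_0^\eps}_{H^{s+s_0}}\norm{U^\eps}_{L^\infty}.
\end{equation*}
The $R$-factors arise from Bernstein's Lemma \ref{lemma:Bernstein} applied to $\bU$ (whose Fourier support lies in $\mathcal{B}_3(0,2R)$), while the $R^{1-s_0}$ factor comes from $\norm{\bU}_{H^{s+1}}\leq CR^{1-s_0}\norm{\bU}_{H^{s+s_0}}\leq CR^{1-s_0}\norm{U_0^\eps}_{H^{s+s_0}}$ when $s_0\leq 1$.

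To close the estimate, I integrate in time, apply H\"older $\norm{\bU}_{L^1_T L^\infty}\leq T^{3/4}\norm{\bU}_{L^4_T L^\infty}$ and the Strichartz bound \eqref{eq:Strichartz4}, invoke Lemma \ref{le:smalll_data_Utilde} (which gives $\norm{\widetilde{U}_0^\eps}_{H^s}\leq C\mathcal{C}(U_0^\eps)R^{-s_0}$ under $r=R^{-\delta}$), and finally set $R=\eps^{-a}$ for a small $a>0$ to be tuned. A continuity/bootstrap argument --- assuming $\norm{\tU}_{L^\infty_t H^s}\leq 2\norm{\widetilde{U}_0^\eps}_{H^s}$ and checking that the right-hand side of the integrated estimate strictly improves this bound on $[0,T_\eps^\star]$, for $T_\eps^\star=\overline{C}\,\mathcal{C}(U_0^\eps)^{-1}\eps^{-\alpha}$ --- then yields the claimed lower bound on the lifespan. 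The asymptotic behaviour $\norm{\tU}_{L^\infty_{T_\eps^\star} H^s}=\mathcal{O}(\eps^{\beta s})$ follows directly from the initial-data bound combined with the tuning of $a,\delta$, and $s_0$. The principal obstacle is precisely this multi-parameter tuning: one must simultaneously push $R$ large to shrink $\norm{\widetilde{U}_0^\eps}_{H^s}$, keep the Strichartz-driven forcing $T^{3/4}R^{5/2+\delta}\eps^{1/4}$ and the $R^{1-s_0}$ cross-term bounded on a \emph{diverging} window $T=\eps^{-\alpha}$, and absorb the quadratic term $T\norm{\tU}^2_{L^\infty_T H^s}$. The $\eps^{1/4}$ gain in \eqref{eq:Strichartz4} together with the two-parameter freedom in $(a,\delta)$ (under the constraint $\delta/p'=2(s+s_0)$ from Lemma \ref{le:smalll_data_Utilde}) leaves a non-empty range of admissible $\alpha>0$, which fixes the explicit exponent in the lifespan bound.
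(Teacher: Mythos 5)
Your overall architecture is the same as the paper's: decompose $U^\eps=\bU+\tU$, build $\tU$ locally by a Friedrichs-type scheme (the paper uses mollifiers $J_n$ and a Cauchy-sequence argument, but this part is routine and your sketch is adequate), then run a bootstrap on $\norm{\tU}_{\widetilde L^\infty_tH^s}$ in which every term containing $\bU$ is supposed to gain $\eps^{1/4}$ from the Strichartz bound \eqref{eq:Strichartz4}, and finally tune $R=\eps^{-\beta}$, $r=R^{-\delta}$, $\alpha$. The local existence, the bootstrap set-up, and the parameter-tuning discussion all match the paper.

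However, your a priori estimate contains a term that destroys the $\eps^{-\alpha}$ lifespan, namely $CR^{1-s_0}\norm{U_0^\eps}_{H^{s+s_0}}\norm{U^\eps}_{L^\infty}$. This term arises because you estimate the contribution $\widetilde u^\eps\cdot\nabla\bU$ (and its analogues in the $b$-equations) through the symmetric product rule \eqref{product_rule}, which forces one piece of the form $\norm{\widetilde u^\eps}_{L^\infty}\norm{\nabla\bU}_{H^s}$. The factor $\norm{\nabla\bU(t)}_{H^s}$ is measured at \emph{fixed time} and is essentially conserved by the unitary propagator, so it carries no power of $\eps$ whatsoever; bounding $\norm{\widetilde u^\eps}_{L^\infty}\lesssim\norm{\tU}_{H^s}$ turns it into a Gronwall term with rate at least $C\,\mathcal{C}(U_0^\eps)$ (worse, $C\,\mathcal{C}(U_0^\eps)R^{1-s_0}$ when $s_0<1$), uniformly in $\eps$. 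Integrating over $[0,T]$ with $T=\eps^{-\alpha}$ then produces a factor $\exp\pare{C\,\mathcal{C}(U_0^\eps)\,\eps^{-\alpha}}$, or, in bootstrap form, forces $T\lesssim \mathcal{C}(U_0^\eps)^{-1}R^{-(1-s_0)_+}$, i.e.\ a lifespan that is $O(1)$ at best and shrinking for small $s_0$ — not the claimed $\eps^{-\alpha}$. The only way to rescue this term is to never measure $\nabla\bU$ in an $L^\infty_t H^s$-type norm: this is exactly what the paper's Lemma \ref{le:biprod_cut2} does. In the Bony decomposition of $v\,\dd_i\overline u^j$, both the paraproduct piece $S_{q'+2}(\dd_i\overline u^j)\Delta_{q'}v$ and the remainder piece $S_{q'-1}v\,\Delta_{q'}\dd_i\overline u^j$ place the $\bU$-factor, blockwise, in $L^1_tL^\infty_x\lesssim t^{3/4}L^4_tL^\infty_x$, where the dyadic Strichartz bound $\norm{\Delta_{q'}\bU}_{L^4_tL^\infty}\lesssim R^{3/2}r^{-1}\eps^{1/4}\norm{\Delta_{q'}\Prr U_0^\eps}_{L^2}$ supplies both the $\eps^{1/4}$ gain and the summability in $q'$, while the $\tU$-factor only ever appears through $L^\infty_tL^2$-based norms. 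Without this lemma (or an equivalent refinement replacing your use of \eqref{product_rule} on the mixed terms), the bootstrap cannot close on a time interval of length $\eps^{-\alpha}$, so the central claim of the theorem is not reached by your argument as written.
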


\bigskip

The proof of Theorem \ref{eq:hi_freq} will be divided into two parts

\begin{itemize}
	\item[\textbf{Part 1}] In the first part, using an iterative scheme, we prove that, for $\varepsilon_0$ small enough (which will be precised later), and for any $\varepsilon \in ]0,\varepsilon_0[$, there exists a unique strong solution $\tU$ of \eqref{hi-freq_system} in $L^\infty \left( \left[0,T\right],\Hs \right)$, with the lifespan $T > 0$ independent of $\varepsilon$. 

	\bigskip

	\item[\textbf{Part 2}] In the second part, we prove more refined estimates which allow to obtain the lower bound for the maximal lifespan $ T^\star_\varepsilon$ given in Theorem \ref{eq:hi_freq}.
\end{itemize}

\subsection{Local-in-time existence result for the nonlinear part.}

\label{sec:local_existence}

Throughout this part, we will always fix constants $\beta, \delta > 0$ and the Rossby number $\eps > 0$. We also set the radii of the frequency cut-off to be 
\begin{equation}
	\label{eq:radii} R = \eps^{-\beta}, \quad r = R^{-\delta} = \eps^{\beta\delta}.
\end{equation}
Our choice of these parameters will be explained and precised during the proof, at the place where we need to adjust their values.

Our goal is to prove the existence of a unique, local strong solution of the system \eqref{hi-freq_system}. To simplify the notations and the calculations, we rewrite \eqref{hi-freq_system} as follows 
\begin{equation}
	\label{system_local_existence}
	\left\lbrace
	\begin{aligned}
	& \partial_t \tU + \frac{1}{\varepsilon}\B \tU = - u^\varepsilon \cdot \nabla  U^\varepsilon -\bgam \left( 
	\begin{array}{c}
		b^\varepsilon \nabla b^\varepsilon\\
		b^\varepsilon \dive u^\varepsilon
	\end{array}	
	 \right)\\
	& \left. \tU \right|_{t=0}= \left( 1-\Prr \right)U_0
	\end{aligned}
\right.
\end{equation}
where we set $U^\varepsilon= \bU + \tU$ and where $ \bU $ is the solution of \eqref{eq:free_wave} . Our approach can be resumed in the following steps
\begin{enumerate}
	\item We introduce a sequence of linear systems,  indexed by $n\in\NN$, starting from \eqref{system_local_existence} and by induction with respect to $n$, we construct a solution of the $n$-th system defined in $\widetilde{L}^\infty \pare{\left[0, T_n\right], H^\sigma\pare{\RR^3}^4}$, for some given $\sigma \in ]s,s+s_0[$ and for some $T_n > 0$. We emphasize that $\eps$ will always be fixed during this stage.

	\bigskip

	\item We prove that we can choose $\eps_0 > 0$ small enough such that, for any $\eps \in ]0,\eps_0[$, the sequence of solutions of the previously introduced linear systems are uniformly bounded in $\widetilde{L}^\infty \left( \left[0, T_\eps \right]; H^\sigma(\R^3)^4 \right)$, for some $T_\eps > 0$ independent of $n$.

	\bigskip

	\item We prove that the sequence of solutions is a Cauchy sequence in $\widetilde{L}^\infty \pare{[0, T_\eps]; H^s(\R^3)^4}$. 

	\bigskip

	\item We check that the limit $\widetilde{U}^\eps$ satisfies \eqref{hi-freq_system}.
\end{enumerate}
The main technique result needed for our approach consists in the control of the bilinear terms. This control is given in the following lemma, which will be proven in the appendix.
\begin{lemma}
	\label{le:bi_bootstrap}
	The following estimates hold
	\begin{multline}
		\label{eq:bi_bootstrap}
		\int_0^t \abs{\psca{\tq \pare{u^\varepsilon(\tau) \cdot \nabla U^\varepsilon(\tau)} \;\big\vert\; \tq \tU(\tau)}_{L^2}} \, d\tau\\ 
		\leqslant C \;\mathcal{C}(U_0^\eps) b_q 2^{-2qs} R^{\frac{5+2\delta}2} t^{\frac{3}4} \eps^{\frac{1}4}  + C \;\mathcal{C}(U_0^\eps) b_q 2^{-2qs}\pare{R^{\frac{5+2\delta}2} t^{\frac{3}4} \eps^{\frac{1}4} + t \norm{\tU}_{\widetilde{L}^\infty\pare{[0,t], H^s}}} \norm{\tU}_{\widetilde{L}^\infty\pare{[0,t], H^s}}^2,
	\end{multline}
	\begin{multline}
		\label{eq:bi_bootstrap2}
		\int_0^t \abs{\psca{\tq \pare{b^\varepsilon (\tau) \nabla b^\varepsilon (\tau) } \;\big\vert\; \tq \widetilde{u}^\varepsilon (\tau)}_{L^2} + \psca{\tq \left( b^\varepsilon(\tau) \dive u^\varepsilon(\tau) \right) \;\big\vert\; \tq \widetilde{b}^\varepsilon (\tau) } } \, d\tau\\ 
		\leqslant C \;\mathcal{C}(U_0^\eps) b_q 2^{-2qs} R^{\frac{5+2\delta}2} t^{\frac{3}4} \eps^{\frac{1}4}  + C \;\mathcal{C}(U_0^\eps) b_q 2^{-2qs}\pare{R^{\frac{5+2\delta}2} t^{\frac{3}4} \eps^{\frac{1}4} + t \norm{\tU}_{\widetilde{L}^\infty\pare{[0,t], H^s}}} \norm{\tU}_{\widetilde{L}^\infty\pare{[0,t], H^s}}^2,
	\end{multline}
	where $b_q$ is a summable sequence such that $\sum_q b_q = 1$.
\end{lemma}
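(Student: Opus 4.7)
The plan is to write $U^\varepsilon = \bU + \tU$ so that the bilinear term $u^\varepsilon \cdot \nabla U^\varepsilon$ splits as $(\overline u^\varepsilon + \widetilde u^\varepsilon)\cdot \nabla(\bU + \tU)$ into four pieces, and then apply Bony's decomposition \eqref{eq:Bonydec} to each piece before pairing with $\tq \tU$ in $L^2$. These four pieces are sorted according to how many factors of $\bU$ they contain: the purely high-frequency piece $\widetilde u^\varepsilon \cdot \nabla \tU$ is controlled by a classical hyperbolic energy estimate, while the three pieces carrying at least one factor of $\bU$ are controlled via the Strichartz estimate \eqref{eq:Strichartz4}.

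For the self-interaction $\widetilde u^\varepsilon \cdot \nabla \tU$, I would use the standard commutator trick: write $\tq(S_{q'-1}\widetilde u^\varepsilon \, \Delta_{q'}\nabla \tU) = [\tq, S_{q'-1}\widetilde u^\varepsilon]\nabla \Delta_{q'}\tU + S_{q'-1}\widetilde u^\varepsilon \, \nabla \tq \Delta_{q'}\tU$. The commutator is tame by Lemma \ref{commutator estimate}, while the transport piece, after integration by parts, produces only a $\dive \widetilde u^\varepsilon$ contribution. Because $s > 5/2 > 3/2$, the Sobolev embedding $H^s \hookrightarrow \Linfty$ closes the estimate and yields precisely the cubic term $t\, \|\tU\|_{\widetilde L^\infty([0,t], H^s)}^3$ appearing as the last contribution in \eqref{eq:bi_bootstrap}.

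For the three remaining contributions, each carrying at least one factor of $\bU$, the key device is H\"older in time $L^4_t L^\infty_x \cdot L^{4/3}_t L^2_x \hookrightarrow L^1_t L^2_x$, whose $L^{4/3}_t$-norm on $[0,t]$ yields the factor $t^{3/4}$. The $L^4_t L^\infty_x$-norm of $\bU$ is bounded by \eqref{eq:Strichartz4}, giving $R^{3/2} r^{-1} \varepsilon^{1/4} \norm{U_0^\varepsilon}_{L^2} = R^{(3+2\delta)/2} \varepsilon^{1/4} \norm{U_0^\varepsilon}_{L^2}$ via $r = R^{-\delta}$, and any derivative landing on the spectrally localized $\bU$ costs one extra power of $R$ by Lemma \ref{lemma:Bernstein}. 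Combining these one recovers the announced factor $R^{(5+2\delta)/2}\, t^{3/4}\, \varepsilon^{1/4}$: the pure $\bU \cdot \nabla \bU$ piece is quadratic in $\norm{U_0^\varepsilon}_{L^2}$ (absorbed into $\mathcal{C}(U_0^\varepsilon)$) and yields the first, stand-alone contribution of \eqref{eq:bi_bootstrap}, while the two crossed pieces $\bU \cdot \nabla \tU$ and $\widetilde u^\varepsilon \cdot \nabla \bU$ pick up an extra $\|\tU\|_{\widetilde L^\infty H^s}^2$ and feed the middle contribution.

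Estimate \eqref{eq:bi_bootstrap2} is treated identically, the only novelty being that the \emph{combined} inner product $\psca{\tq(b^\varepsilon \nabla b^\varepsilon) | \tq \widetilde u^\varepsilon} + \psca{\tq(b^\varepsilon \dive u^\varepsilon) | \tq \widetilde b^\varepsilon}$ enjoys a Littlewood--Paley analogue of the symmetric-hyperbolic identity $(\mathcal{B} F | F)_{L^2} = 0$ recalled after \eqref{eq:WCEshortsym}: after Bony decomposition, the leading $S_{q'-1}b^\varepsilon$-paraproducts essentially cancel via integration by parts, leaving only commutator residues and lower-order $\dive u^\varepsilon$-terms, which are bounded as above. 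The main obstacle is book-keeping: I have to track, through every piece of the Bony decomposition, that the derivative loss always lands on the spectrally localized $\bU$ so that Bernstein pays only a single extra power of $R$ and never an $r^{-1}$; and that each $\tU$-block is measured in the Chemin--Lerner space $\widetilde L^\infty_t H^s$ so that Lemma \ref{le:DeltaqHsbis} provides a square-summable sequence with weight $2^{-qs}$. Combining a weight from the bilinear term and a weight from $\tq \tU$ then delivers the factor $b_q\, 2^{-2qs}$ appearing on the right-hand side.
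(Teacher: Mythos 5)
Your proposal is correct and follows essentially the same route as the paper: the same four-way splitting of $u^\varepsilon\cdot\nabla U^\varepsilon$ according to the factors $\bU$ and $\tU$, the same symmetrized paraproduct/commutator energy estimate (integration by parts putting the derivative on the coefficient) for the pure $\tU$ interaction, and the same use of the $L^4_t L^\infty_x$ Strichartz bound \eqref{eq:Strichartz4} with H\"older in time and one Bernstein factor of $R$ for the pieces containing $\bU$, measured in Chemin--Lerner norms to produce $b_q 2^{-2qs}$. The only caveat is that for the piece $\overline{u}^\varepsilon\cdot\nabla\tU$ the derivative does not initially sit on $\bU$; as you implicitly acknowledge, one must first run the commutator decomposition and integrate by parts so that the derivative is transferred to the spectrally localized factor, which is exactly what the paper does.
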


\bigskip

\noindent \textit{\textbf{Step 1.}} 
We fix $\sigma \in ]s,s+s_0[$, say $\sigma = (s + \frac{s_0}2)$. For any $n \in \NN^*$, we define the operator
\begin{equation*}
	J_n f = \mathcal{F}^{-1} \left( 1_{\left\{ \abs{\xi} \leqslant n\right\} \cap \left\{ \abs{\xi_h} \geqslant 1/n \right\} \cap \left\{ \abs{\xi_3} \geqslant 1/n\right\} } \widehat{f} \right)
\end{equation*}
which is continuous from $\2$ to $\2$. Setting $\widetilde{U}^0=0$, by induction, we define the following family of linear systems, related to \eqref{system_local_existence}
\begin{equation}
	\label{eq:seqlinsys_n} \tag{\ref{system_local_existence}${}_n$}
	\left\lbrace
	\begin{aligned}
	& \partial_t \widetilde{U}^{n+1}+\frac{1}{\varepsilon}\B \widetilde{U}^{n+1} + J_{n+1}\left(  \mathcal{A} \left( \bU + \widetilde{U}^n, D \right)  J_{n+1} \widetilde{U}^{n+1} \right) = - J_{n+1}\left(  \mathcal{A} \left( \bU + \widetilde{U}^n, D \right)  J_{n+1} \bU \right)\\
	&{\widetilde{ U}^{n+1}}_{|_{t=0}}= \widetilde{U}^{n+1}_0= \mathcal{F}^{-1} \left( 1_{B\left( 0,n+1 \right)} \mathcal{F}\left( 1-\mathcal{P}_{r,R} \right){U}_0 \right)
\end{aligned}
\right.
\end{equation}
$ \mathcal{A} $ is defined in \eqref{eq:defA}. We remark that since $\widetilde{U}^0\equiv 0$, $\widetilde{U}^1$ is solution to the following linear system
\begin{equation}
	\tag{\ref{system_local_existence}${}_0$}
	\left\lbrace
	\begin{aligned}
	& \partial_t \widetilde{U}^1+\frac{1}{\varepsilon} \B \widetilde{U}^1 + J_1\left(\mathcal{A} \left( \bU, D \right)  J_1 \widetilde{U}^1 \right) = - J_1\left(\mathcal{A} \left( \bU, D \right)  J_1 \bU \right)\\
	&{\widetilde{ U}^1}_{|_{t=0}}= \widetilde{U}^1_0= \mathcal{F}^{-1} \left( 1_{B\left( 0,1 \right)} \mathcal{F}\left( 1-\mathcal{P}_{r,R} \right){U}_0 \right)
\end{aligned}
\right.
\end{equation}
We have $\bU \in L^\infty\pare{\RR_+, H^\alpha(\RR^3)^4}$, for any $\alpha > 0$, because of its frequency localization property and Lemma \ref{le:smalll_data_Utilde} implies that $\widetilde{U}^1_0 \in H^\sigma(\RR^3)^4$. Then, we can easily construct the solution $\widetilde{U}^1$ of the linear system (\ref{system_local_existence}${}_0$) such that the support of $\mathcal{F}\pare{\widetilde{U}^1}$ is included in $B\left( 0,1 \right)$.

Now, for any $n \in \NN^*$, let $L^2_n\pare{\R^3}$ be the space 
\begin{equation*}
	L^2_n\pare{\R^3} = \set{f\in \2 \;\big\vert\; \text{Supp}\, \widehat{f} \subset    \left( \left\{ \left| \xi \right| \leqslant n\right\} \cap \left\{ \left| \xi_h \right| \geqslant 1/n\right\} \cap \left\{ \left| \xi_3 \right| \geqslant 1/n\right\} \right) }.
\end{equation*}
Let $\eta > 0$ be a fix positive constant and we suppose that, for any $0 \leqslant k\leqslant n-1$, we can construct a unique maximal solution $\widetilde{U}^{k+1}$ of (\ref{system_local_existence}${}_k$) in $$C^1\pare{[0,T_{k+1}], L^2_{k+1}\pare{\R^3}^4} \cap \widetilde{L}^\infty \pare{\left[0,T_{k+1}\right], H^\sigma\pare{\R^3}^4}$$ such that 
\begin{equation*}
	\left\| \widetilde{U}^{k+1} \right\|_{\widetilde{L}^\infty\left( \left[0,T_{k+1}\right]; H^\sigma \right)}\leqslant \eta.
\end{equation*}
Thanks to the embedding $H^\sigma(\R^3) \hra L^\infty(\R^3)$, we have $\bU + \widetilde{U}^n \in L^\infty\pare{[0,T_n],\Linfty^4}$, which implies that 
$$J_{n+1}\left(\mathcal{A} \left( \bU + \widetilde{U}^n\left( t \right), D \right) J_{n+1} \varphi \right)\in \2^4, \quad \forall\, \varphi \in \2^4$$ 
Indeed, since the Fourier transform of $J_{n+1} \varphi$ is supported in $\mathcal{B}_3(0,n+1)$, using Bernstein lemma \ref{lemma:Bernstein}, we have
\begin{align*}
	\norm{J_{n+1}\pare{\mathcal{A} \left( \bU + \widetilde{U}^n(t), D \right) J_{n+1} \varphi}}_{L^2} &\leqslant \norm{\bU + \widetilde{U}^n(t)}_{L^\infty} \norm{\nabla J_{n+1}\varphi}_{L^2}\\ 
	&\leqslant \norm{\bU + \widetilde{U}^n(t)}_{L^\infty} (n+1) \norm{\varphi}_{L^2}.
\end{align*}

We can rewrite \eqref{eq:seqlinsys_n} as an ODE
$$
\partial_t \widetilde{U}^{n+1}= \mathcal{L}_{n+1} \widetilde{U}^{n+1} + \Phi,
$$
where $\Phi \in \2^4$ and the linear operator $\mathcal{L}_{n+1}$ maps continuously $\2^4$ to $\2^4$. The Cauchy-Lipschitz theorem ensure the existence of a unique maximal solution to the system \eqref{eq:seqlinsys_n} $$\widetilde{U}^{n+1} \in \mathcal{C}^1\left( \left[0,T_{n+1}\right]; \2^4 \right).$$ Moreover, since $ J_{n+1}^2=J_{n+1} $, applying $ J_{n+1} $ to \eqref{eq:seqlinsys_n}, we obtain, by uniqueness, that $$ J_{n+1} \widetilde{U}^{n+1} = \widetilde{U}^{n+1}.$$ Hence, $ \widetilde{U}^{n+1} $ belongs not only to $\2^4$ but to $L^2_{n+1}\pare{\R^3}^4$, which conclude the first step by induction.

\bigskip

\noindent \textit{\textbf{Step 2.}} We recall that throughout this paper, we use $C$ to denote a generic positive constant which can change from line to line. In this step, we want to prove that, for previously chosen $\eps > 0$ small enough, the sequence $\set{T_n}$ is bounded from below away from zero, which means that there exists $T_\eps > 0$ such that, for any $n \in \NN$,  
\begin{equation}
	\label{eq:boundtildeUn} \left\| \widetilde{U}^{n} \right\|_{\widetilde{L}^\infty\left( \left[0,T_\eps\right]; H^\sigma \right)} \leqslant \eta.
\end{equation}

We will prove \eqref{eq:boundtildeUn} by induction. For $n = 0$, we have nothing to do. So we suppose that, for fix $T_\eps > 0$ which will be precised later, \eqref{eq:boundtildeUn} is true for any $0 \leqslant k \leqslant n$. Now, we want to estimate $\widetilde{U}^{n+1}$ in $\widetilde{L}^\infty\pare{[0,T_\eps], H^\sigma\pare{\RR^3}}$-norm. Applying $\tq$ to \eqref{eq:seqlinsys_n}, taking the $L^2$-scalar product of the obtained equation with $\tq \widetilde{U}^{n+1}$ and then integrating with respect to the time variable on $[0,t]$, we get
\begin{align}
	\label{eq:tildeUn01} \norm{\tq \widetilde{U}^{n+1} (t)}_{L^2}^2 \leqslant \norm{\tq \widetilde{U}^{n+1}_0}_{L^2}^2 &+ 2 \int_0^t \abs{\psca{J_{n+1} \tq \pare{u^n \cdot \nabla U^{n+1}}} \;\big\vert\; \tq \widetilde{U}^{n+1}} (\tau) \, d\tau\\
	&+2 \int_0^t \Big|\psca{J_{n+1}\tq \pare{b^n \nabla b^{n+1}} \;\big\vert\; \tq \widetilde{u}^{n+1}}_{L^2} + \notag\\
	& \qquad \qquad \qquad + \psca{J_{n+1}\tq \left( b^n \dive u^{n+1} \right) \;\big\vert\; \tq \widetilde{b}^{n+1}} \Big| (\tau) \, d\tau \notag
\end{align}
Using the same method as in the proof of Lemma \ref{le:bi_bootstrap}, we decompose the bilinear term on the right hand side of \eqref{eq:tildeUn01} into the following sums
\begin{equation*}
	\int_0^t \abs{\psca{J_{n+1} \tq \pare{u^n \cdot \nabla U^{n+1}}} \;\big\vert\; \tq \widetilde{U}^{n+1}} (\tau) \, d\tau \leqslant B^{n+1}_1 + B^{n+1}_2 + B^{n+1}_3 + B^{n+1}_4,
\end{equation*}
and
\begin{multline*}
	\int_0^t \left|\psca{J_{n+1}\tq \pare{b^n \nabla b^{n+1}} \;\big\vert\; \tq \widetilde{u}^{n+1}}_{L^2} + \psca{J_{n+1}\tq \left( b^n \dive u^{n+1} \right) \;\big\vert\; \tq \widetilde{b}^{n+1}} \right| (\tau) \, d\tau\\
	\leqslant C^{n+1}_1 + C^{n+1}_2 + C^{n+1}_3 + C^{n+1}_4.
\end{multline*}
where, applying Lemmas \ref{le:biproduct}, \ref{le:biprod_cut2} and \ref{le:biprod_cut1} in the appendix, we have
\begin{align}
	\label{eq:tildeBn01} B^{n+1}_1 &= \int_0^t \abs{\psca{\tq \pare{\overline{u}^\eps \cdot \nabla \bU}} \;\big\vert\; \tq \widetilde{U}^{n+1}} (\tau) \, d\tau\\
	&\leqslant C \norm{U_0^\eps}_{H^\sigma} R^{\frac{5+2\delta}2} t^{\frac{3}4} \eps^{\frac{1}4} b_q 2^{-2q\sigma} \norm{\bU}_{\widetilde{L}^\infty\pare{[0,t], H^\sigma}} \norm{\widetilde{U}^{n+1}}_{\widetilde{L}^\infty\pare{[0,t], H^\sigma}} \notag\\
	&\leqslant C \;\mathcal{C}(U_0^\eps) \, t^{\frac{3}4} \eps^{\frac{1}4-\frac{\beta(5+2\delta)}2} b_q 2^{-2q\sigma} \pare{1 + \norm{\widetilde{U}^{n+1}}_{\widetilde{L}^\infty\pare{[0,t], H^\sigma}}^2}, \notag
\end{align}
\begin{align}
	\label{eq:tildeBn02} B^{n+1}_2 &= \int_0^t \abs{\psca{\tq \pare{\widetilde{u}^n \cdot \nabla \bU}} \;\big\vert\; \tq \widetilde{U}^{n+1}} (\tau) \, d\tau\\
	&\leqslant C \;\mathcal{C}(U_0^\eps) \, R^{\frac{5+2\delta}2} t^{\frac{3}4} \eps^{\frac{1}4} b_q 2^{-2q\sigma} \norm{\widetilde{U}^n}_{\widetilde{L}^\infty\pare{[0,t], H^\sigma}} \norm{\widetilde{U}^{n+1}}_{\widetilde{L}^\infty\pare{[0,t], H^\sigma}} \notag\\
	&\leqslant C \;\mathcal{C}(U_0^\eps) \, \eta \, t^{\frac{3}4} \eps^{\frac{1}4-\frac{\beta(5+2\delta)}2} b_q 2^{-2q\sigma} \pare{1 + \norm{\widetilde{U}^{n+1}}_{\widetilde{L}^\infty\pare{[0,t], H^\sigma}}^2}, \notag
\end{align}
\begin{align}
	\label{eq:tildeBn03} B^{n+1}_3 &= \int_0^t \abs{\psca{\tq \pare{\overline{u}^\eps \cdot \nabla \widetilde{U}^{n+1}}} \;\big\vert\; \tq \widetilde{U}^{n+1}} (\tau) \, d\tau\\
	&\leqslant C \;\mathcal{C}(U_0^\eps) \, R^{\frac{5+2\delta}2} t^{\frac{3}4} \eps^{\frac{1}4} b_q 2^{-2q\sigma} \norm{\widetilde{U}^{n+1}}_{\widetilde{L}^\infty\pare{[0,t], H^\sigma}}^2 \qquad \qquad \qquad \quad \notag\\
	&\leqslant C \;\mathcal{C}(U_0^\eps) \, t^{\frac{3}4} \eps^{\frac{1}4-\frac{\beta(5+2\delta)}2} b_q 2^{-2q\sigma} \norm{\widetilde{U}^{n+1}}_{\widetilde{L}^\infty\pare{[0,t], H^\sigma}}^2, \notag
\end{align}
\begin{align}
	\label{eq:tildeBn04} B^{n+1}_4 &= \int_0^t \abs{\psca{\tq \pare{\widetilde{u}^n \cdot \nabla \widetilde{U}^{n+1}}} \;\big\vert\; \tq \widetilde{U}^{n+1}} (\tau) \, d\tau \qquad \\
	&\leqslant C b_q 2^{-2q\sigma} \norm{\widetilde{U}^n}_{\widetilde{L}^\infty([0,t],H^\sigma)} \norm{\widetilde{U}^{n+1}}_{\widetilde{L}^2([0,t],H^\sigma)}^2 \notag  \qquad \qquad \qquad \quad \\
	&\leqslant C \eta \, t \, b_q 2^{-2q\sigma} \norm{\widetilde{U}^{n+1}}_{\widetilde{L}^\infty([0,t],H^\sigma)}^2, \notag
\end{align}
and
\begin{align}
	\label{eq:tildeCn01} C^{n+1}_1 &= \int_0^t \abs{\psca{\tq \pare{\overline{b}^n \nabla \overline{b}^{n+1}} \big\vert \tq \widetilde{u}^{n+1}}} (\tau) \, d\tau + \int_0^t \abs{\psca{\tq \pare{\overline{b}^n \dive \overline{u}^{n+1}} \big\vert \tq \widetilde{b}^{n+1}}} (\tau) \, d\tau\\
	&\leqslant C \norm{U_0^\eps}_{H^\sigma} R^{\frac{5+2\delta}2} t^{\frac{3}4} \eps^{\frac{1}4} b_q 2^{-2q\sigma} \norm{\bU}_{\widetilde{L}^\infty\pare{[0,t], H^\sigma}} \norm{\widetilde{U}^{n+1}}_{\widetilde{L}^\infty\pare{[0,t], H^\sigma}} \notag\\
	&\leqslant C \;\mathcal{C}(U_0^\eps) \, t^{\frac{3}4} \eps^{\frac{1}4-\frac{\beta(5+2\delta)}2} b_q 2^{-2q\sigma} \pare{1 + \norm{\widetilde{U}^{n+1}}_{\widetilde{L}^\infty\pare{[0,t], H^\sigma}}^2}, \notag
\end{align}
\begin{align}
	\label{eq:tildeCn02} C^{n+1}_2 &= \int_0^t \abs{\psca{\tq \pare{\widetilde{b}^n \nabla \overline{b}^{n+1}} \big\vert \tq \widetilde{u}^{n+1}}} (\tau) \, d\tau + \int_0^t \abs{\psca{\tq \pare{\widetilde{b}^n \dive \overline{u}^{n+1}} \big\vert \tq \widetilde{b}^{n+1}}} (\tau) \, d\tau\\
	&\leqslant C \;\mathcal{C}(U_0^\eps) \, R^{\frac{5+2\delta}2} t^{\frac{3}4} \eps^{\frac{1}4} b_q 2^{-2q\sigma} \norm{\widetilde{U}^n}_{\widetilde{L}^\infty\pare{[0,t], H^\sigma}} \norm{\widetilde{U}^{n+1}}_{\widetilde{L}^\infty\pare{[0,t], H^\sigma}} \notag\\
	&\leqslant C \;\mathcal{C}(U_0^\eps) \, \eta \, t^{\frac{3}4} \eps^{\frac{1}4-\frac{\beta(5+2\delta)}2} b_q 2^{-2q\sigma} \pare{1 + \norm{\widetilde{U}^{n+1}}_{\widetilde{L}^\infty\pare{[0,t], H^\sigma}}^2}, \notag
\end{align}
\begin{align}
	\label{eq:tildeCn03} C^{n+1}_3 &= \int_0^t \abs{\psca{\tq \pare{\overline{b}^n \nabla \widetilde{b}^{n+1}} \big\vert \tq \widetilde{u}^{n+1}}} (\tau) \, d\tau + \int_0^t \abs{\psca{\tq \pare{\overline{b}^n \dive \widetilde{u}^{n+1}} \big\vert \tq \widetilde{b}^{n+1}}} (\tau) \, d\tau\\
	&\leqslant C \;\mathcal{C}(U_0^\eps) \, R^{\frac{5+2\delta}2} t^{\frac{3}4} \eps^{\frac{1}4} b_q 2^{-2q\sigma} \norm{\widetilde{U}^{n+1}}_{\widetilde{L}^\infty\pare{[0,t], H^\sigma}}^2 \qquad \qquad \qquad \quad \notag\\
	&\leqslant C \;\mathcal{C}(U_0^\eps) \, t^{\frac{3}4} \eps^{\frac{1}4-\frac{\beta(5+2\delta)}2} b_q 2^{-2q\sigma} \norm{\widetilde{U}^{n+1}}_{\widetilde{L}^\infty\pare{[0,t], H^\sigma}}^2, \notag
\end{align}
\begin{align}
	\label{eq:tildeCn04} C^{n+1}_4 &= \int_0^t \left|\psca{\tq \pare{\widetilde{b}^n \nabla \widetilde{b}^{n+1}} \;\big\vert\; \tq \widetilde{u}^{n+1}} + \psca{\tq \pare{\widetilde{b}^n \dive \widetilde{u}^{n+1}} \;\big\vert\; \tq \widetilde{b}^{n+1}} \right| (\tau) \, d\tau \\
	&\leqslant C b_q 2^{-2q\sigma} \norm{\widetilde{U}^n}_{\widetilde{L}^\infty([0,t],H^\sigma)} \norm{\widetilde{U}^{n+1}}_{\widetilde{L}^2([0,t],H^\sigma)}^2 \notag\\
	&\leqslant C \eta \, t \, b_q 2^{-2q\sigma} \norm{\widetilde{U}^{n+1}}_{\widetilde{L}^\infty([0,t],H^\sigma)}^2, \notag
\end{align}
where $\mathcal{C}(U_0^\eps)$ is defined in \eqref{eq:C0}.

\bigskip

Inserting Estimates \eqref{eq:tildeBn01} to \eqref{eq:tildeCn04} into \eqref{eq:tildeUn01}, multiplying the obtained inquality by $2^{2q\sigma}$, then summing with respect to $q \geqslant -1$ and applying Lemma \ref{le:smalll_data_Utilde} lead to
\begin{multline*}
	\norm{\widetilde{U}^{n+1}}_{\widetilde{L}^\infty([0,t],H^\sigma)}^2 \leqslant C \;\mathcal{C}(U_0) \, \eps^{2\beta s_0} + C \eta \, t \norm{\widetilde{U}^{n+1}}_{\widetilde{L}^\infty([0,t],H^\sigma)}^2 \\ + C \;\mathcal{C}(U_0^\eps) \, t^{\frac{3}4} \eps^{\frac{1}4-\frac{\beta(5+2\delta)}2} \pare{1 + \norm{\widetilde{U}^{n+1}}_{\widetilde{L}^\infty\pare{[0,t], H^\sigma}}^2}.\qquad 
\end{multline*}
We recall that $\sigma \in ]s,s+s_0[$ and $\eta > 0$ are fixed positive constants and $\delta > 0$ is given in Lemma \ref{le:smalll_data_Utilde}. We choose $\eps_0 > 0$, $T_\eps > 0$ and $\beta > 0$ such that
\begin{equation}
	\label{eq:Conditions01}
	\left\{ 
	\begin{aligned}
		&\beta(5+2\delta) < \frac{1}2\\
		&C \eta T_\eps +  C \;\mathcal{C}(U_0^\eps) \, T_\eps^{\frac{3}4} \eps_0^{\frac{1}4-\frac{\beta(5+2\delta)}2} < \frac{1}2\\
		&C \;\mathcal{C}(U_0^\eps) \, \eps_0^{2\beta s_0} + C \;\mathcal{C}(U_0^\eps) \, T_\eps^{\frac{3}4} \eps_0^{\frac{1}4-\frac{\beta(5+2\delta)}2} \leqslant \frac{\eta^2}2.
	\end{aligned}
	\right.
\end{equation}
Then, for any $\eps \in [0,\eps_0]$, we deduce that,
\begin{equation*}
	\norm{\widetilde{U}^{n+1}}_{\widetilde{L}^\infty\pare{[0,T_\eps], H^\sigma}} \leqslant \eta,
\end{equation*}
and Step 2 is concluded.

\begin{rem}
	In fact, the time of existence $T_\eps = T > 0$ depends only on $\eps_0 > 0$ and thus is independent of $\eps$, for $\eps \in ]0,\eps_0[$.
\end{rem}

\bigskip

\noindent \textit{\textbf{Step 3.}} At first, we will prove that $\set{\widetilde{U}^{n}}_n$ is a Cauchy sequence in the space $L^\infty\left( \left[0,T\right]; \2^4 \right)$. We define the auxiliary sequence $\set{\widetilde{V}^n}_n $ by
$$ \widetilde{V}^{n+1}= \widetilde{U}^{n+1}-\widetilde{U}^n, \quad \forall\; n\in\NN.$$ For any $n \in \NN$, $\widetilde{V}^{n+1}$ is solution of the system
\begin{equation}
	\label{eq:deltautilde}
	\left\lbrace
	\begin{aligned}
		& \partial_t\widetilde{V}^{n+1} + \frac{1}{\varepsilon}\B \widetilde{V}^{n+1} 
		+\mathcal{A} \left( \widetilde{U}^n, D \right) \widetilde{V}^{n+1} 
		 + \mathcal{A} \left( \widetilde{V}^n, D \right) \widetilde{U}^n 
		 + \mathcal{A} \left( \widetilde{V}^n, D \right) \bU
		 + \mathcal{A} \left( \bU, D \right) \widetilde{V}^{n+1}
		  = 0\\
		& \left. \widetilde{V}^{n+1} \right|_{t=0}=  \widetilde{U}^{n+1}_0 -  \widetilde{U}^{n}_0.
	\end{aligned}
	\right.
\end{equation}
We will need the following estimates, the proof of which is simple and direct.
\begin{lemma}
	\label{bilinear_delta} The following estimates hold
	\begin{align*}
		\abs{\psca{\mathcal{A} \left( \widetilde{U}^n, D \right) \widetilde{V}^{n+1} \;\Big\vert\; \widetilde{V}^{n+1}}_{L^2}} &\leqslant C \left\| \nabla \widetilde{U}^n \right\|_{L^\infty}\left\| \widetilde{V}^{n+1} \right\|^2_{L^2}\\
		\abs{\psca{\mathcal{A} \left( \widetilde{V}^n, D \right) \widetilde{U}^n  \;\Big\vert\; \widetilde{V}^{n+1}}_{L^2}} &\leqslant C \left\| \nabla \widetilde{U}^n \right\|_{L^\infty}\left\| \widetilde{V}^{n} \right\|_{L^2}\left\| \widetilde{V}^{n+1} \right\|_{L^2}\\
		\abs{\psca{\mathcal{A} \left( \widetilde{V}^n, D \right) \bU \;\Big\vert\; \widetilde{V}^{n+1}}_{L^2}} &\leqslant C \left\| \nabla \bU \right\|_{L^\infty}\left\| \widetilde{V}^{n} \right\|_{L^2}\left\| \widetilde{V}^{n+1} \right\|_{L^2}\\
		\abs{\psca{\mathcal{A} \left( \bU, D \right) \widetilde{V}^{n+1} \;\Big\vert\; \widetilde{V}^{n+1}}_{L^2}} &\leqslant C \left\| \nabla \bU \right\|_{L^\infty}\left\| \widetilde{V}^{n+1} \right\|^2_{L^2}.
	\end{align*}
\end{lemma}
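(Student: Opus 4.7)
The four estimates are direct consequences of the structure of the first-order operator $\mathcal{A}(W,D)$ defined in \eqref{eq:defA}. Writing $W = (w, w_b)$ and $V = (v, v_b)$, one has
\begin{equation*}
\mathcal{A}(W,D) V = \bigl( w \cdot \nabla v + \bgam\, w_b \nabla v_b,\; w \cdot \nabla v_b + \bgam\, w_b \dive v \bigr),
\end{equation*}
so the operator is a zero-order perturbation (in $W$) of a transport-type operator that is symmetric in the $V$-variable. The plan is to treat the two ``diagonal'' estimates (first and fourth) via integration by parts, and the two ``off-diagonal'' ones (second and third) by a straight application of H\"older's inequality.

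For the first estimate, I would compute
\begin{equation*}
\psca{\mathcal{A}(\widetilde U^n,D)\widetilde V^{n+1} \,\big\vert\, \widetilde V^{n+1}}_{L^2}
= \tfrac{1}{2}\!\int \widetilde u^n \cdot \nabla \bigl(|\widetilde v^{n+1}|^2 + |\widetilde b^{n+1}|^2\bigr)\, dx
+ \bgam \!\int \widetilde b^n \dive\!\bigl(\widetilde b^{n+1}\,\widetilde v^{n+1}\bigr) dx,
\end{equation*}
where I used the shorthand $\widetilde V^{n+1} = (\widetilde v^{n+1},\widetilde b^{n+1})$ and $\widetilde U^n=(\widetilde u^n, \widetilde b^n)$. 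Integrating by parts on both terms moves one derivative onto $\widetilde U^n$, and H\"older's inequality in $L^\infty \times L^2 \times L^2$ then yields the desired bound $C\|\nabla \widetilde U^n\|_{L^\infty}\|\widetilde V^{n+1}\|_{L^2}^2$. The fourth estimate is proved by the exact same computation with $\widetilde U^n$ replaced by $\bU$.

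For the second estimate, no integration by parts is needed: the derivative already sits on $\widetilde U^n$, so
\begin{equation*}
\bigl|\psca{\mathcal{A}(\widetilde V^n,D)\widetilde U^n \,\big\vert\, \widetilde V^{n+1}}_{L^2}\bigr|
\leqslant C\bigl(\|\widetilde v^n\|_{L^2} + \|\widetilde b^n\|_{L^2}\bigr) \|\nabla \widetilde U^n\|_{L^\infty}\|\widetilde V^{n+1}\|_{L^2},
\end{equation*}
which is exactly the claimed inequality. The third estimate follows identically, by placing $\nabla \bU$ in $L^\infty$ and keeping $\widetilde V^n$ and $\widetilde V^{n+1}$ in $L^2$.

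Since the four inequalities reduce to a one-line H\"older application (for the off-diagonal terms) and to a standard symmetric-hyperbolic energy identity followed by one integration by parts (for the diagonal terms), there is no real obstacle here; the only point requiring care is to verify that the symmetric structure of $\mathcal{B}$ transfers to $\mathcal{A}$ so that the integrations by parts in the first and fourth estimates produce only the symmetric cross-term $\bgam \int \widetilde b^n \dive(\widetilde b^{n+1}\widetilde v^{n+1})$ rather than an unbounded commutator.
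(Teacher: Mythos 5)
Your proof is correct; the paper omits the proof of this lemma entirely (calling it ``simple and direct''), and your argument --- the energy identity plus one integration by parts for the two diagonal estimates, and a direct H\"older bound for the two off-diagonal ones --- is exactly the standard argument intended. One minor quibble: the structure you need for the diagonal terms is the symmetry of the first-order operator $\mathcal{A}(W,D)$ itself in the $V$-variable (so that the $\bgam$-cross terms combine into the divergence $\bgam \int \widetilde b^{n}\dive\pare{\widetilde b^{n+1}\widetilde v^{n+1}}$), not a property ``transferred'' from $\mathcal{B}$, which plays no role here.
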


Taking the $L^2$ scalar product of the first equation of \eqref{eq:deltautilde} with $\norm{\widetilde{V}^{n+1}}_{L^2}^{-1} \widetilde{V}^{n+1}$ and using Bernstein Lemma \ref{lemma:Bernstein}, Lemma \ref{bilinear_delta} and the Sobolev inclusion $H^\sigma\pare{\RR^3} \hookrightarrow W^{1,\infty}\pare{\RR^3}$, we obtain
\begin{equation}
	\label{eq:tildeVn1}
	\frac{d}{dt} \norm{\widetilde{V}^{n+1}}_{L^2} \leqslant \pare{\norm{\widetilde{U}^n}_{H^\sigma} + R \norm{\bU}_{L^\infty}} \pare{\norm{\widetilde{V}^{n+1}}_{L^2} + \norm{\widetilde{V}^n}_{L^2}}.
\end{equation}
We recall that, for any $n \in \NN$, 
\begin{equation*}
	\left\| \widetilde{U}^n \right\|_{\widetilde{L}^\infty \left( \left[0,T \right]; H^\sigma \right) }\leqslant \eta
\end{equation*}
and that H\"older inequality and Estimate \eqref{eq:Strichartz4} give, for any $0 < t \leqslant T$,
\begin{equation*}
	\int_0^t \left\| \bU \right\|_{L^\infty} ds \leqslant \pare{\int_0^t ds}^{\frac{3}4} \; \norm{\bU}_{L^4\pare{[0,t],L^\infty\pare{\RR^3}}} \leqslant C \;\mathcal{C}(U_0^\eps) T^{\frac{3}4} R^{\frac{3+2\delta}2} \eps^{\frac{1}4},
\end{equation*}
where $\mathcal{C}(U_0^\eps)$ is defined in \eqref{eq:C0}. Using Bernstein Lemma \ref{lemma:Bernstein}, we also have
\begin{equation*}
	\norm{\widetilde{V}^{n+1}(0)}_{L^2}^2 = \norm{\widetilde{U}^{n+1}_0 -  \widetilde{U}^{n}_0}_{L^2}^2 \leqslant \int_{\abs{\xi} > n} \pare{1+\abs{\xi}^2}^{-s} \pare{1+\abs{\xi}^2}^s \abs{\widehat{U}_0}^2 d\xi \leqslant C \;\mathcal{C}(U_0^\eps) n^{-2s}.
\end{equation*}
Integrating \eqref{eq:tildeVn1} with respect to the time variable and taking into account all the above inequalities and remarking that we already chose $R = \eps^{-\beta}$, $\beta > 0$ (see formula \eqref{eq:radii}), we obtain
\begin{equation*}
	v_{n+1} \leq C \;\mathcal{C}(U_0^\eps) n^{-s} + \pare{\eta T + C\;\mathcal{C}(U_0^\eps) T^{\frac{3}4} \eps^{\frac{1}4 - \frac{\beta(5+2\delta)}2}} \pare{v_{n+1} + v_n},
\end{equation*}
where for any $n\in\NN$, we set
\begin{equation*}
	v_n = \norm{\widetilde{V}^n}_{L^\infty\pare{[0,T],L^2}}.
\end{equation*}
If we choose the parameters such that
\begin{equation}
	\label{eq:Conditions02}
	\left\{ 
	\begin{aligned}
		&\beta(5+2\delta) < \frac{1}2\\
		&\eta T + C\;\mathcal{C}(U_0^\eps) \, T^{\frac{3}4} \eps_0^{\frac{1}4 - \frac{\beta(5+2\delta)}2} < \frac{1}3,
	\end{aligned}
	\right.
\end{equation}
then, for any $\eps \in ]0,\eps_0[$, we have
\begin{equation*}
	v_{n+1} \leq C \;\mathcal{C}(U_0^\eps) \, n^{-s} + \frac{1}2 v_n.
\end{equation*}
Since $s > \frac{5}2$, the series $\sum_{n\in \NN} n^{-s}$ is convergent, which implies that the sequence $\set{v^n}_n$ is summable, which in turn implies that $\set{\widetilde{U}^n}_n$ is a Cauchy sequence in $L^\infty\pare{[0,T], \2^4}$. Since $\set{\widetilde{U}^n}_n$ is a bounded sequence in $\widetilde{L}^\infty\pare{[0,T], H^\sigma\pare{\RR^3}^4}$, for some $\sigma \in ]s,s+s_0[$, by interpolation, we deduce that $\set{\widetilde{U}^n}_n$ is a Cauchy sequence in $\widetilde{L}^\infty\pare{[0,T], H^s\pare{\RR^3}^4}$, and so, there exists $\tU$ in $\widetilde{L}^\infty\pare{[0,T], H^s\pare{\RR^3}^4}$ such that
\begin{equation*}
	\tU = \lim_{n\to +\infty} \widetilde{U}^n.
\end{equation*}

\begin{rem}
	Fixing $s > \frac{5}2$, $s_0 > 0$, $\sigma \in ]s,s+s_0[$, $\eta > 0$, $\delta > 0$, the conditions \eqref{eq:Conditions01} and \eqref{eq:Conditions02} can easily be satisfied by choosing $\beta > 0$, $T > 0$ and $\eps_0 > 0$ sufficiently small.
\end{rem}

\bigskip

\noindent \textit{\textbf{Step 4.}} It remains to verify if $\tU$ is a solution of \eqref{hi-freq_system}. In fact, we only have to check if we can pass to the limit in the bilinear term in the first equation of the system \eqref{system_local_existence}. Since $s > \frac{5}2$, classical product laws in Sobolev spaces yield
\begin{align*}
	&\left\| \mathcal{A}\left( \tU, D \right) \tU  - \mathcal{A}\left(\widetilde{U}^n, D\right)\widetilde{U}^{n+1}\right\|_{H^{s-1}}\\ 
	&\qquad \qquad \leqslant \left\|\mathcal{A}\left(  \widetilde{U}^n - \tU, D \right) \widetilde{U}^{n+1} \right\|_{H^{s-1}} + \left\| \mathcal{A}\left(\tU, D\right) \left(\widetilde{U}^{n+1} -  \tU \right) \right\|_{H^{s-1}}\\
	&\qquad \qquad \leqslant \norm{\widetilde{U}^n - \tU}_{H^{s-1}} \norm{\nabla \widetilde{U}^{n+1}}_{L^\infty} + \norm{\widetilde{U}^n - \tU}_{L^\infty} \norm{\widetilde{U}^{n+1}}_{H^s}\\ 
	&\qquad \qquad \quad + \norm{\tU}_{H^{s-1}} \norm{\nabla\pare{\widetilde{U}^{n+1} - \tU}}_{L^\infty} + \norm{\tU}_{L^\infty} \norm{\widetilde{U}^{n+1} - \tU}_{H^s}\\
	&\qquad \qquad \leqslant C\norm{\tU}_{H^s} \norm{\widetilde{U}^{n+1} - \tU}_{H^s} + C \norm{\widetilde{U}^{n+1}}_{H^s} \norm{\widetilde{U}^n - \tU}_{H^s}
\end{align*}
We recall that $\widetilde{U}^n$ and so $\tU$ are bounded in $L^\infty\pare{[0,T],\Hs^4}$ by $\eta > 0$. Besides, we also prove in Step 3 that $$\lim_{n\to +\infty} \norm{\widetilde{U}^n - \tU}_{L^\infty\pare{[0,T],H^s}} = 0.$$ Thus, we obtain 
$$\lim_{n\to +\infty} \left\| \mathcal{A}\left( \tU, D \right) \tU  - \mathcal{A}\left(\widetilde{U}^n, D\right)\widetilde{U}^{n+1}\right\|_{L^\infty\pare{[0,T],H^{s-1}}} = 0,$$ 
which allows to pass to the limit and conclude Step 4.

\bigskip

We remark that by construction, $\tU \in C\pare{[0,T], L^2(\RR^3)^4} \cap L^\infty\pare{[0,T],H^\sigma(\RR^3)^4}$, for some $\sigma > s$, which implies that $\tU \in C\pare{[0,T],\Hs^4}$. To finish this part, we study the uniqueness and the continuity with respect to the initial data of the previously contructed solution. More precisely, we prove the following lemma.

\begin{lemma}
Let $ U_0\in H^{s+s_0}, s>5/2, s_0>0 $. There exists a unique solution of the system \eqref{eq:WCEshortsym} in $ \widetilde{L }^{\infty} \left( \left[0, T\right]; \Hs^4 \right)$. Moreover, if $\Phi$ is the function which associates to $ U_0\in H^{s+s_0} $ the unique solution $ U $ of \eqref{eq:WCEshortsym}, then
$$
\Phi \in \mathcal{C}^{0,1} \left( \Hs^4; {L }^{\infty} \left( \left[0, T\right]; \Hs^4 \right) \right).
$$
\end{lemma}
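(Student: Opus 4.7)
My plan is to establish the two missing pieces from the previous subsection, namely uniqueness and Lipschitz continuity with respect to the initial data, via an $L^2$ energy estimate on the difference of two solutions, followed by an interpolation argument that exploits the extra $s_0$ derivatives of regularity encoded in the hypothesis $U_0 \in H^{s+s_0}$. Existence in $\widetilde{L}^\infty\pare{[0,T];\Hs^4}$ has already been produced in Steps 1--4 of Section \ref{sec:local_existence}.

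For uniqueness, let $U_1, U_2 \in \widetilde{L}^\infty\pare{[0,T];\Hs^4}$ be two solutions of \eqref{eq:WCEshortsym} issued from the same initial datum. Their difference $W = U_1 - U_2$ satisfies the linear problem
\begin{equation*}
\partial_t W - \frac{1}{\varepsilon}\mathcal{B} W + \mathcal{A}(U_1,D) W + \mathcal{A}(W,D) U_2 = 0, \qquad W|_{t=0} = 0.
\end{equation*}
Taking the $L^2$ inner product with $W$, I would exploit the skew-symmetry of $\mathcal{B}$ noted in the remark after \eqref{definizione_B} to kill the penalized term, and then apply the bilinear bounds of Lemma \ref{bilinear_delta} to obtain
\begin{equation*}
\frac{d}{dt} \norm{W}_{L^2}^2 \leqslant C \pare{\norm{\nabla U_1}_{L^\infty} + \norm{\nabla U_2}_{L^\infty}} \norm{W}_{L^2}^2.
\end{equation*}
Since $s > 5/2$, the Sobolev embedding $\Hs \hookrightarrow W^{1,\infty}\pare{\R^3}$ makes the right-hand side integrable in time, and Gronwall's lemma forces $W \equiv 0$.

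For the Lipschitz dependence, I would run the identical energy argument on the difference $W = \Phi(U_{0,1}) - \Phi(U_{0,2})$ of two solutions starting from distinct data $U_{0,1}, U_{0,2} \in H^{s+s_0}$. The same computation, now with a nonzero initial condition, delivers the $L^2$ Lipschitz bound
\begin{equation*}
\norm{W}_{L^\infty([0,T],L^2)} \leqslant C \, e^{C T \max_i \norm{\Phi(U_{0,i})}_{L^\infty([0,T],H^s)}} \norm{U_{0,1} - U_{0,2}}_{L^2},
\end{equation*}
where the exponential prefactor is finite because of the uniform $\widetilde{L}^\infty([0,T],H^\sigma)$ control ($\sigma = s + s_0/2$) established in Step 2 of Section \ref{sec:local_existence}. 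Interpolating between $L^2$ and $H^\sigma$, writing
\begin{equation*}
\norm{W}_{H^s} \lesssim \norm{W}_{L^2}^{1-s/\sigma} \norm{W}_{H^\sigma}^{s/\sigma},
\end{equation*}
and using the boundedness of $\norm{\Phi(U_{0,i})}_{\widetilde{L}^\infty([0,T],H^\sigma)}$ in terms of $\norm{U_{0,i}}_{H^{s+s_0}}$, I obtain the claimed continuity of $\Phi$ from the $H^s$-topology on the data (restricted to a fixed bounded set of $H^{s+s_0}$) to $L^\infty([0,T];\Hs^4)$.

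The main technical obstacle is passing from Lipschitz continuity in $L^2$ to Lipschitz continuity in $\Hs$: a direct $\Hs$-energy estimate on $W$ would require handling the term $\mathcal{A}(W,D) U_2$ at the $\Hs$-level, which by the tame product rule \eqref{product_rule} demands control of $\norm{U_2}_{H^{s+1}}$, an object that is only finite thanks to the reservoir of $s_0$ extra derivatives. The interpolation circumvents the need for this direct estimate, and the exponent $1 - s/\sigma$ absorbs the only loss; the constants remain uniform on bounded sets of $H^{s+s_0}$, yielding the $\mathcal{C}^{0,1}$ property claimed in the lemma.
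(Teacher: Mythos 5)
Your argument is essentially the paper's own proof: an $L^2$ energy estimate on the difference of two solutions (using the skew-symmetry of $\mathcal{B}$ and the $\norm{\nabla U_i}_{L^\infty}$ bilinear bounds), Gronwall with the Sobolev embedding $\Hs \hra \Woi$, and then interpolation with the uniform $H^\sigma$ bound to upgrade from $L^2$ to $\Hs$. Your remark that the interpolation only yields a H\"older exponent $1-s/\sigma$ at the $\Hs$ level applies equally to the paper's argument, so there is no gap relative to it.
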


\begin{proof}
Let us consider two initial data $ U_{i, 0}^\eps \in H^{s+s_0}, s>5/2, s_0>0, i=1,2 $. These data generate two solutions $U_i^\eps$, $i = 1,2$, to the system
\begin{equation*}
	\left\lbrace
	\begin{aligned}
		&\partial_t U_i^\eps -\frac{1}{\varepsilon}\mathcal{ B} U_i^\eps = - \mathcal{A} \left( U_i^\eps , D \right) U_i^\eps,\\ 
		& \left. U_i^\eps\right|_{t=0}= U_{i,0}^\eps.
	\end{aligned}
	\right.
\end{equation*}

We remark that $\delta U^\eps = U_1^\eps-U_2^\eps$ solves the system
\begin{equation}
	\label{eq:difference_system_uniqueness}
	\left\lbrace
	\begin{array}{l}
		\partial_t \delta U^\eps + \ds\frac{1}{\varepsilon} \B \delta U^\eps = -\mathcal{A}\left( \delta U^\eps , D \right) U_2^\eps - \mathcal{A}\left( U_1^\eps, D \right) \delta U^\eps\\
		\left. \delta U^\eps\right|_{t=0}= U_{1,0}^\eps-U_{2,0}^\eps.
	\end{array}
	\right.
\end{equation}
Taking $ \2 $ scalar product of the first equation of \eqref{eq:difference_system_uniqueness} with $\delta U^\eps$, and considering the following inequalities, 
\begin{align*}
	&\abs{ \psca{ \mathcal{A}\left( \delta U^\eps , D \right) U_2^\eps \;\Big\vert\; \delta U^\eps }_\2} \leqslant \left\| \nabla U_2^\eps \right\|_{\Linfty} \left\| \delta U^\eps \right\|_{L^2}^2\\
	&\abs{ \psca{ \mathcal{A}\left( U_1^\eps, D \right)\delta U^\eps \;\Big\vert\; \delta U^\eps }_\2} \leqslant \left\| \nabla U_1^\eps \right\|_{\Linfty} \left\| \delta U^\eps \right\|_{L^2}^2,
\end{align*}
we deduce via Gronwall inequality, and the embedding $ \Hs\hra \Woi $ that
\begin{align*}
	\left\| \delta U^\eps  \left( t \right)\right\|_\2^2 \leqslant \left\| \delta U^\eps(0) \right\|_\2^2 \
	e^{2 \int_0^t \left( \left\| U_1^\eps \left( \tau \right) \right\|_\Hs + \left\| U_2^\eps \left( \tau \right) \right\|_\Hs \right) d \tau}.
\end{align*}
From the construction of the solution, we have 
$$\left\| U_i^\eps \right\|_{\widetilde{L}^\infty \left( \left[0,T\right]; \Hs^4 \right) } \leqslant \left\| U_{i,0}^\eps \right\|_\Hs + \eta,$$ 
hence
\begin{align*}
	e^{2\int_0^t \left( \left\| U_1^\eps \left( \tau \right) \right\|_\Hs + \left\| U_2^\eps \left( \tau \right) \right\|_\Hs \right) d \tau} \leqslant e^{ \left( 4\eta + 2\left\| U_{1,0}^\eps \right\|_\Hs + 2\left\| U_{2,0}^\eps \right\|_\Hs \right)T},
\end{align*}
which implies the uniqueness and the continuity of the solution in the space $ L^\infty \left( [0,T];\2^4 \right) $. The uniqueness and the continuity in $ {L}^\infty \left( \left[0,T\right]; \Hs^4 \right) $ follow by interpolation.
\end{proof}

\subsection{Lifespan of the nonlinear part.}

In this part, we will provide a control of the maximal lifespan $T^\star_\eps$ of the solution previously constructed. Applying $\tq$ to \eqref{hi-freq_system}, taking the $L^2$-scalar product of the obtained equation with $\tq \tU$ and then integrating with respect to the time variable on $[0,t]$, we get
\begin{multline}
	\label{eq:tildeU01} \norm{\tq \tU (t)}_{L^2}^2 \leqslant \norm{\tq \widetilde{U}_0}_{L^2}^2 + 2 \int_0^t \abs{\psca{\tq \pare{u^\eps(\tau) \cdot \nabla U^\eps(\tau)} \;\big\vert\; \tq \tU(\tau)}} \, d\tau\\
	+ 2 \int_0^t \abs{\psca{\tq\left( \begin{array}{c} b^\varepsilon\nabla b^\varepsilon\\ b^\varepsilon \dive u^\varepsilon \end{array} \right)	\left( \tau \right) \;\big\vert\; \tq \tU (\tau)}_{L^2}} d\tau.
\end{multline}
We recall that $R = \eps^{-\beta}$, $\beta > 0$. Then, using Lemma \ref{le:smalll_data_Utilde}, we deduce the existence of $\eps_1 \in ]0,1[$ such that, for $0 < \eps \leqslant \eps_1$,
\begin{equation*}
	\norm{\widetilde{U}_0}_{H^s} \leqslant C\;\mathcal{C}(U_0^\eps) \, R^{-s_0} = C\;\mathcal{C}(U_0^\eps) \, \eps^{\beta s_0} \leqslant \eps^{\frac{\beta s_0}2},
\end{equation*}
where $\mathcal{C}(U_0^\eps)$ is defined in \eqref{eq:C0}. Let
\begin{equation}
	\label{eq:max_time} T_0 = \sup \set{T> 0 \;:\; \norm{\tU(t)}_{\widetilde{L}^\infty\pare{[0,t], H^s}} \leqslant 2 \eps^{\frac{\beta s_0}2}, \;\forall\, t \in [0,T]}.
\end{equation}
The continuity of $\tU$ with respect to the time variable implies that $T_0 > 0$. Multiplying \eqref{eq:tildeU01} by $2^{2qs}$ and summing with respect to $q \geqslant -1$, then using Lemma \ref{le:bi_bootstrap}, for any $t \in [0,T_0[$, we obtain
\begin{multline*}
	\norm{\tU}_{\widetilde{L}^\infty([0,t],H^s)}^2 \leqslant \norm{\widetilde{U}_0}_{H^s}^2 + C \;\mathcal{C}(U_0^\eps) \, t^{\frac{3}4} \eps^{\frac{1}4 - \frac{\beta(5+2\delta)}2}\\ + C \;\mathcal{C}(U_0^\eps) \, \pare{t^{\frac{3}4} \eps^{\frac{1}4 - \frac{\beta(5+2\delta)}2} + t \norm{\tU}_{\widetilde{L}^\infty\pare{[0,t], H^s}}} \norm{\tU}_{\widetilde{L}^\infty\pare{[0,t], H^s}}^2\notag
\end{multline*}
which implies that for any $0 < \eps \leqslant \eps_1$ and for any $t \in [0,T_0[$,
\begin{equation*}
	\norm{\tU}_{\widetilde{L}^\infty([0,t],H^s)}^2 \leqslant \eps^{\beta s_0} + C \;\mathcal{C}(U_0^\eps) \, t^{\frac{3}4} \eps^{\frac{1}4 - \frac{\beta(5+2\delta)}2} + C \;\mathcal{C}(U_0^\eps) \, \pare{t^{\frac{3}4} \eps^{\frac{1}4 - \frac{\beta(5+2\delta)}2} + 2t \eps^{\frac{\beta s_0}2}} \norm{\tU}_{\widetilde{L}^\infty\pare{[0,t], H^s}}^2.
\end{equation*}

If 
\begin{equation}
	\label{eq:hypothesis01} \beta\pare{10 + 4\delta + 4s_0} < 1
\end{equation} 
and if
\begin{equation}
	\label{eq:hypothesis02} C \;\mathcal{C}(U_0^\eps) \, \pare{T_0^{\frac{3}4} \eps^{\frac{1}4 - \frac{\beta(5+2\delta)}2} + 2T_0 \eps^{\frac{\beta s_0}2}} \leqslant \frac{1}2
\end{equation}
then there exists $\eps_2 \in ]0,\eps_1]$ such that, for any $0 < \eps \leqslant \eps_2$ and for any $0 < t < T_0$, we have
\begin{equation*}
	\frac{1}2 \norm{\tU}_{\widetilde{L}^\infty([0,t],H^s)}^2 \leqslant \eps^{\beta s_0} + C \;\mathcal{C}(U_0^\eps) \, t^{\frac{3}4} \eps^{\frac{1}4 - \frac{\beta(5+2\delta)}2} < 2\eps^{\beta s_0},
\end{equation*}
and so,
\begin{equation*}
	\norm{\tU}_{\widetilde{L}^\infty([0,t],H^s)} < 2 \eps^{\frac{\beta s_0}2}. 
\end{equation*}
Thus, the solution $\tU$ exists at least up to a time $T_0 > 0$ satisfies \eqref{eq:hypothesis02}. We remark that, if \eqref{eq:hypothesis01} is satisfied, then there exists $\eps_3 \in ]0,1[$ such that \eqref{eq:hypothesis02} is satisfied, for any $0 < \eps \leqslant \eps_3$. Setting $$\eps^\star = \min\set{\eps_2,\eps_3}, \quad \alpha = \min\set{\frac{1}4 - \frac{\beta(5+2\delta)}2\;,\; \frac{\beta s_0}2} > 0 \quad \mbox{and} \quad \overline{C}  = \frac{1}{6C},$$ we deduce from \eqref{eq:hypothesis01} and \eqref{eq:hypothesis02} that, for any $\eps \in ]0,\eps^\star]$,
\begin{equation*}
	T^\star_\eps \geqslant T_0 \geqslant \overline{C} \mathcal{C}(U_0^\eps)^{-1} \eps^{-\alpha}.
\end{equation*}
Theorem \ref{eq:hi_freq} is proved. \hfill $\square$

\appendix

\section{Estimates on the bilinear terms}

In this appendix, we prove important estimates on the bilinear term, which allow to prove Lemma \ref{le:bi_bootstrap}. First of all, we prove the following lemma

\begin{lemma}
	\label{le:biproduct}
	Let $i \in \set{1,2,3}$ and $\dd_i = \frac{\dd}{\dd x_i}$. For any $s > \frac{5}2$ and for all functions $u$, $v$ and $w$ in $\Hs$, we have
	\begin{equation}
		\label{eq:biproduct}
		\int_0^t \abs{\psca{\tq (w(\tau) \,\dd_i u(\tau)) \;\big|\; \tq u(\tau)}_{L^2}} \, d\tau \leqslant C b_q 2^{-2qs} \norm{w}_{\widetilde{L}^\infty([0,t],H^s)} \norm{u}_{\widetilde{L}^2([0,t],H^s)}^2,  
	\end{equation}
	and
	\begin{multline}
		\label{eq:biproductuv}
		\int_0^t \abs{\psca{\tq (w(\tau) \,\dd_i u(\tau)) \;\big|\; \tq v(\tau)}_{L^2} + \psca{\tq (w(\tau) \,\dd_i v(\tau)) \;\big|\; \tq u(\tau)}_{L^2}} \, d\tau \\ 
		\leqslant C b_q 2^{-2qs} \norm{w}_{\widetilde{L}^\infty([0,t],H^s)} \norm{u}_{\widetilde{L}^2([0,t],H^s)} \norm{v}_{\widetilde{L}^2([0,t],H^s)},  
	\end{multline}
	where $b_q$ is a summable sequence such that $\sum_q b_q = 1$.
\end{lemma}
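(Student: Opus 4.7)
My plan is to combine Bony's paraproduct decomposition
\[
w\partial_i u = T_w(\partial_i u) + T_{\partial_i u}(w) + R(w,\partial_i u),
\]
with the reorganized form \eqref{eq:Bonydec} of $\tq(w\partial_i u)$, and to estimate separately the contribution of each of the three pieces when paired in $L^2$ with $\tq u$ (or $\tq v$). The delicate step will be the paraproduct $T_w(\partial_i u)$, since a naive H\"older-type bound would lose one derivative on $u$; the remaining two pieces should be handled by routine paradifferential arguments.

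For $T_w(\partial_i u)$, for each $q'$ with $\abs{q-q'}\leqslant 4$ I would split
\[
\tq\bigl(S_{q'-1}w\,\partial_i\tQ u\bigr) = \bigl[\tq,\,S_{q'-1}w\bigr]\partial_i\tQ u + S_{q'-1}w\,\partial_i\tq\tQ u.
\]
The commutator term is bounded by Lemma \ref{commutator estimate} together with Bernstein's inequality \eqref{eq:Bernstein2} applied to $\partial_i\tQ u$, yielding a factor $C\norm{\nabla S_{q'-1}w}_{L^\infty}\norm{\tQ u}_{L^2}$. The embedding $H^s\hookrightarrow W^{1,\infty}$ (valid since $s>5/2$) then controls $\norm{\nabla w}_{L^\infty}$ by $\norm{w}_{H^s}$. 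For the transport-type second summand, I would pair with $\tq u$ and integrate by parts in $x_i$, transferring the derivative onto the low-frequency factor: up to off-diagonal remainders controlled by frequency localization,
\[
\int S_{q'-1}w\,\partial_i(\tq\tQ u)\,\tq u\,dx = -\tfrac{1}{2}\int \partial_i S_{q'-1}w\,\abs{\tq\tQ u}^2\,dx + \cdots,
\]
so that no derivative ends up falling on $u$. Both contributions produce a pointwise-in-time bound of the schematic form $\norm{\nabla w}_{L^\infty}\norm{\tq u}_{L^2}\norm{\tQ u}_{L^2}$.

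For the remaining pieces $T_{\partial_i u}(w)$ and $R(w,\partial_i u)$, which via \eqref{eq:Bonydec} correspond to blocks $S_{q'+2}\partial_i u\cdot \tQ w$ with $q'>q-4$, the plan is routine: Bernstein absorbs $\partial_i$ into a factor $2^{q'}$ on the localized block, H\"older's inequality puts the $w$-block in $L^\infty$ via $H^s\hookrightarrow L^\infty$ and the other factor in $L^2$, and the sum over $q'>q-4$ converges thanks to $s>5/2$. After multiplying by $2^{2qs}$, summing in $q$ by means of Lemma \ref{le:DeltaqHsbis}, and integrating in time with Cauchy--Schwarz (placing $w$ in $L^\infty_t H^s$ and each $u$-factor in $L^2_t H^s$), the resulting $\ell^2$-sequences combine via Young's convolution inequality into a summable sequence $b_q$, yielding \eqref{eq:biproduct}.

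The second estimate \eqref{eq:biproductuv} will follow by the same scheme, the only modification being a symmetrization in the integration-by-parts step:
\[
\int S_{q'-1}w\,\partial_i(\tq\tQ u)\,\tq\tQ v\,dx + \int S_{q'-1}w\,\partial_i(\tq\tQ v)\,\tq\tQ u\,dx = -\int \partial_i S_{q'-1}w\,(\tq\tQ u)(\tq\tQ v)\,dx,
\]
which converts the pointwise-in-time bound into $\norm{u}_{\widetilde{L}^2H^s}\norm{v}_{\widetilde{L}^2H^s}$ in place of $\norm{u}^2_{\widetilde{L}^2H^s}$. The main obstacle throughout is expected to be the careful bookkeeping of this commutator-plus-integration-by-parts argument for the paraproduct $T_w(\partial_i u)$ to avoid any derivative loss; every other step reduces to standard paradifferential calculus.
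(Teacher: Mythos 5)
Your overall strategy is the paper's own: the Bony decomposition in the form \eqref{eq:Bonydec}, with the paraproduct part $\sum_{\abs{q'-q}\leqslant 4}\tq\pare{S_{q'-1}w\,\dd_i\Delta_{q'}u}$ treated by the Chemin--Lerner commutator lemma plus a symmetrized integration by parts, and the remaining sum over $q'>q-4$ treated by H\"older, Bernstein, the embedding $\Hs\hra\Woi$ and Young's convolution inequality for the dyadic sequences. The delicate commutator/integration-by-parts part is handled correctly, and your ``off-diagonal remainders controlled by frequency localization'' are precisely the paper's term $I_{B2}$ involving $\pare{S_q-S_{q'-1}}w$, which is indeed harmless.

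There is, however, one step that fails as written. In the sum $\sum_{q'>q-4}\tq\pare{S_{q'+2}\pare{\dd_i u}\,\Delta_{q'}w}$ you propose to put ``the $w$-block in $L^\infty$ via $H^s\hookrightarrow L^\infty$ and the other factor in $L^2$''. This is backwards. The only source of the dyadic decay $c_{q'}2^{-q's}$ that turns the sum over $q'>q-4$ into the required weight $2^{-qs}b_q$ is the block $\Delta_{q'}w$, and it produces that decay only when measured in $L^2$ via \eqref{eq:DeltaqHs}; measured in $L^\infty$ it gives either no $q'$-decay at all (if one simply bounds $\norm{\Delta_{q'}w}_{L^\infty}\leqslant C\norm{w}_{L^\infty}$, in which case the $q'$-sum diverges) or only $2^{-q'\pare{s-3/2}}$ after Bernstein, while $\norm{S_{q'+2}\dd_i u}_{L^2}$ contributes no compensating factor. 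One then ends up with at best $2^{-q\pare{2s-3/2}}b_q$ instead of $2^{-2qs}b_q$, and the final summation $\sum_q 2^{2qs}\pare{\cdots}$ no longer closes. The correct assignment, which is what the paper does in \eqref{bound_Iq4}, is $\norm{S_{q'+2}\dd_i u}_{L^\infty}\leqslant C\norm{u}_{\Hs}$ (no Bernstein needed, only $\Hs\hra\Woi$) together with $\norm{\Delta_{q'}w}_{L^2}\leqslant c_{q'}(w)2^{-q's}\norm{w}_{\Hs}$. With this single correction your argument coincides with the paper's proof.
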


\begin{proof}

We will only prove Estimate \eqref{eq:biproductuv}. Estimate \eqref{eq:biproduct} can be obtained from \eqref{eq:biproductuv} by choosing $u=v$. Applying the Bony decomposition as in \eqref{eq:Bonydec} to the products $w \dd_i  u$ and $w \dd_i  v$, we can write
\begin{equation}
	\label{eq:bony_dec}
	\int_0^t \abs{\psca{\tq (w(\tau) \,\dd_i u(\tau)) \;\big|\; \tq v(\tau)}_{L^2} + \psca{\tq (w(\tau) \,\dd_i v(\tau)) \;\big|\; \tq u(\tau)}_{L^2}} d\tau \leqslant I_A + I_B,
\end{equation}
where
\begin{align*}
	I_A &= \int_0^t \Big|\Big\langle\Delta_q \sloin S_{q'+2}\pare{\dd_i  u}\Delta_{q'}w \;\Big\vert\; \Delta_q v \Big\rangle + \Big\langle\Delta_q \sloin S_{q'+2}\pare{\dd_i v} \Delta_{q'} w \;\Big\vert\; \Delta_q u \Big\rangle\Big| (\tau) \, d\tau\\
	I_B &= \int_0^t \Big|\Big\langle \Delta_q \spres S_{q'-1} w \, \dd_i  \Delta_{q'} u \;\Big\vert\; \Delta_q v \Big\rangle + \Big\langle \Delta_q \spres S_{q'-1} w \, \dd_i  \Delta_{q'} v \;\Big\vert\; \Delta_q u \Big\rangle \Big| (\tau) \, d\tau.
\end{align*}

Since $S_{q'+2}$ continuously maps $\Linfty$ to $\Linfty$, using Lemma \ref{le:DeltaqHsbis}, H\"older inequality and the Sobolev inclusion $\Hs \hookrightarrow W^{1,\infty}(\R^3)$, we get
\begin{align}
	\label{bound_Iq4} I_A &\leqslant \sumi \norm{S_{q'+2} \dd_i u}_{L^2([0,t],L^\infty)} \left\| \Delta_{q'} w \right\|_{L^\infty([0,t],L^2)} \norm{\tq v}_{L^2([0,t],L^2)}\\
	& \qquad \qquad + \sumi \norm{S_{q'+2} \dd_i v}_{L^2([0,t],L^\infty)} \left\| \Delta_{q'} w \right\|_{L^\infty([0,t],L^2)} \norm{\tq u}_{L^2([0,t],L^2)} \notag\\
	&\leqslant  C \, b_q \, 2^{-2qs} \norm{w}_{\widetilde{L}^\infty([0,t],H^s)} \norm{u}_{\widetilde{L}^2([0,t],H^s)}^2 \notag
\end{align}
where  
$$\set{b_q}_q = \set{\ds\sumi 2^{-\left( q'-q \right) s } c_{q'}(w) \pare{c_q(u) + c_q(v)}}_q \in \ell^1,$$ using Young convolution inequality and the fact that $\set{c_q(u)}_q$, $\set{c_q(v)}_q$ and $\set{c_{q'}(w)}_{q'}$ are square-summable sequences.

To estimate the second term $I_B$ of \eqref{eq:bony_dec}, we decompose $I_B$ as in \cite{CL92} or in \cite{CDGG2}
\begin{equation*}
	I_B \leqslant I_{B1} + I_{B2} + I_{B3},
\end{equation*}
where
\begin{align*}
	I_{B1} &= \int_0^t \abs{\psca{\Sq w \; \tq \dd_i  u \;\big\vert\; \tq v} + \psca{\Sq w \; \tq \dd_i v \;\big\vert\; \tq u}} (\tau) \, d\tau\\
	I_{B2} &= \int_0^t \sumf \abs{\psca{\left( \Sq-S_{q'-1} \right) w \; \Delta_{q'} \dd_i  u \;\big|\; \tq v} + \psca{\left( \Sq-S_{q'-1} \right) w \; \Delta_{q'} \dd_i  v \;\big|\; \tq u}} (\tau) \, d\tau\\
	I_{B3} &= \int_0^t \sumf \abs{\psca{ \left[ \tq , S_{q'-1} w \right]\Delta_{q'} \dd_i u \;\big|\; \tq v} + \psca{ \left[ \tq , S_{q'-1} w \right]\Delta_{q'} \dd_i v \;\big|\; \tq u}} (\tau) \, d\tau.
\end{align*}
Recalling that $S_q$ continuously maps $\Linfty$ to $\Linfty$, an integration by parts, H\"older inequality, Lemma \ref{le:DeltaqHsbis} and the Sobolev inclusion $\Hs \hookrightarrow W^{1,\infty}(\R^3)$ give
\begin{align}
	\label{bound_Iq1} I_{B1} &= \int_0^t \abs{\Sq (\dd_i  w(\tau)) \; \tq u(\tau) \; \tq v(\tau)} d\tau\\ 
	&\leqslant \norm{\Sq \dd_i  w}_{L^\infty([0,t],L^\infty)} \norm{\tq u}_{L^2([0,t],L^2)} \norm{\tq v}_{L^2([0,t],L^2)} \notag\\ 
	&\leqslant C \, b_q \, 2^{-2qs} \norm{w}_{L^\infty([0,t],H^s)} \norm{u}_{\widetilde{L}^2([0,t],H^s)} \norm{v}_{\widetilde{L}^2([0,t],H^s)}, \notag
\end{align}
where $\set{b_q}_q = \set{c_q(u) c_q(v)}_q$ is a summable sequence. For $I_{B2}$, we remark that $\Sq-S_{q'-1}$ does not contains low frequencies and continuously maps $\Linfty$ to $\Linfty$. Then, using Bernstein lemma \ref{lemma:Bernstein} and H\"older inequality, we obtain the same estimates as in \eqref{bound_Iq1}
\begin{align}
	\label{bound_Iq2} I_{B2} &\leqslant C \int_0^t \spres \norm{(\Sq-S_{q'-1})w(\tau)}_{L^\infty} 2^{q'} \norm{\Delta_{q'} u(\tau)}_{L^2} \norm{\Delta_q v(\tau)}_{L^2} d\tau\\
	&\qquad + C \int_0^t \spres \norm{(\Sq-S_{q'-1})w(\tau)}_{L^\infty} 2^{q'} \norm{\Delta_{q'} v(\tau)}_{L^2} \norm{\Delta_q u(\tau)}_{L^2} d\tau \notag\\
	&\leqslant C \spres \norm{(\Sq-S_{q'-1}) \dd_i  w}_{L^\infty([0,t],L^\infty)} \norm{\Delta_{q'} u}_{L^2([0,t],L^2)} \norm{\Delta_q v}_{L^2([0,t],L^2)} \notag \\
	&\qquad + C \spres \norm{(\Sq-S_{q'-1}) \dd_i  w}_{L^\infty([0,t],L^\infty)} \norm{\Delta_{q'} v}_{L^2([0,t],L^2)} \norm{\Delta_q u}_{L^2([0,t],L^2)} \notag\\
	&\leqslant C \, b_q \, 2^{-2qs} \norm{w}_{L^\infty([0,t],H^s)} \norm{u}_{\widetilde{L}^2([0,t],H^s)} \norm{v}_{\widetilde{L}^2([0,t],H^s)} \notag
\end{align}
where
$$\set{b_q}_q = \set{\spres 2^{-(q'-q) s } \pare{c_q(v) c_{q'}(u) + c_q(u) c_{q'}(v)}}_q \in \ell^1.$$
Finally, for the term $I_{B3}$, H\"older inequality and Lemma \ref{commutator estimate} yield
\begin{align*}
	I_{B3} &\leqslant C \sumf 2^{-q} \left\|S_{q'-1} \nabla  w  \right\|_{L^\infty([0,t],L^\infty)} \left\| \Delta_{q'} \dd_i  u \right\|_{L^2([0,t],L^2)} \left\| \tq v \right\|_{L^2([0,t],L^2)}\\
	& \qquad + C \sumf 2^{-q} \left\|S_{q'-1} \nabla  w  \right\|_{L^\infty([0,t],L^\infty)} \left\| \Delta_{q'} \dd_i v \right\|_{L^2([0,t],L^2)} \left\| \tq u \right\|_{L^2([0,t],L^2)}.
\end{align*}
Using the fact that $S_q$ continuously maps $\Linfty$ to $\Linfty$, Bernstein lemma \ref{lemma:Bernstein} and Estimate \eqref{eq:DeltaqHs}, we have
\begin{align}
	\label{bound_Iq3} I_{B3}  &\leqslant C \sumf 2^{q'-q} \norm{S_{q'-1} \nabla  w}_{L^\infty([0,t],L^\infty)} \norm{\Delta_{q'} u}_{L^2([0,t],L^2)} \norm{\tq v}_{L^2([0,t],L^2)}\\
	&\qquad + C \sumf 2^{q'-q} \norm{S_{q'-1} \nabla  w}_{L^\infty([0,t],L^\infty)} \norm{\Delta_{q'} v}_{L^2([0,t],L^2)} \norm{\tq u}_{L^2([0,t],L^2)} \notag\\
	&\leqslant C \, b_q \, 2^{-2qs} \norm{w}_{L^\infty([0,t],H^s)} \norm{u}_{\widetilde{L}^2([0,t],H^s)} \norm{v}_{\widetilde{L}^2([0,t],H^s)}, \notag
\end{align}
where
$$\set{b_q}_q = \set{\spres 2^{-(q'-q)(s-1) } \pare{c_{q'}(u) c_q(v) + c_{q'}(v) c_q(u)}}_q \in \ell^1.$$

\noindent Inserting \eqref{bound_Iq4}--\eqref{bound_Iq3} into \eqref{eq:bony_dec}, we deduce Estimate \eqref{eq:biproductuv}.
\end{proof}


In order to prove Lemma \ref{le:bi_bootstrap}, we also need the following estimates when the bilinear term contains functions whose Fourier transform is localized in $\Crr$ (see \eqref{eq:CrR} for the definition of $\Crr$).

\begin{lemma}
	\label{le:biprod_cut2}
	Let $T > 0$, $i \in \set{1,2,3}$, $\dd_i = \frac{\dd}{\dd x_i}$ and $\bU$ be the solution of the cut-off linear system \eqref{eq:free_wave}. For any $s > \frac{5}2$, for all functions $v$ and $w$ in $\widetilde{L}^\infty\pare{[0,T],\Hs}$, for any component $\overline{u}^j$ of $\bU$, $j \in \set{1,2,3,4}$, and for any $0 < t \leqslant T$, we have
	\begin{multline}
		\label{eq:biprod_cut2}
		\int_0^t \abs{\psca{\tq \pare{v(\tau) \, \dd_i \overline{u}^j(\tau)} \;\big|\; \tq w(\tau)}_{L^2}} d\tau\\ 
		\leqslant C \;\mathcal{C}(U_0^\eps) \, R^{\frac{5+2\delta}2} t^{\frac{3}4} \eps^{\frac{1}4} b_q 2^{-2qs} \norm{v}_{\widetilde{L}^\infty\pare{[0,t], H^s}} \norm{w}_{\widetilde{L}^\infty\pare{[0,t], H^s}}, \qquad
	\end{multline}
	where $\mathcal{C}(U_0^\eps)$ is defined in \eqref{eq:C0} and $b_q$ is a summable sequence such that $\sum_q b_q = 1$.
\end{lemma}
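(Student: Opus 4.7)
\textbf{Proof proposal for Lemma \ref{le:biprod_cut2}.}

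The plan is to apply Bony's decomposition \eqref{eq:Bonydec} to $v\,\dd_i \overline{u}^j$, writing $\tq(v\,\dd_i \overline{u}^j) = I_1 + I_2$ with
\begin{equation*}
I_1 = \spres \tq\pare{S_{q'-1}(\dd_i \overline{u}^j)\,\Delta_{q'} v}, \qquad I_2 = \sloin \tq\pare{S_{q'+2} v \, \Delta_{q'}(\dd_i \overline{u}^j)}.
\end{equation*}
For each piece I would feed the factor carrying $\overline{u}^j$ into the Strichartz estimate \eqref{eq:Strichartz4} (this is where the small factor $\eps^{1/4}$ comes from) and assign the remaining two factors to $L^\infty_t L^2_x$, using the $\widetilde{L}^\infty H^s$-decay \eqref{eq:DeltaqHs} to extract the dyadic decay $2^{-2qs}$. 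The extra spatial derivative will be absorbed via Bernstein's Lemma \ref{lemma:Bernstein}, which gives $R$ (since $\widehat{\overline{u}^j}$ is supported in the ball of radius $2R$) and combines with $R^{3/2} r^{-1} = R^{3/2+\delta}$ from Strichartz to produce exactly $R^{(5+2\delta)/2}$. The Hölder bound $\norm{\cdot}_{L^1([0,t])} \leqslant t^{3/4} \norm{\cdot}_{L^4([0,t])}$ yields the $t^{3/4}$ factor.

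For $I_1$, the sum is finite (at most nine terms with $q' \sim q$), so one simply bounds
\begin{equation*}
\norm{S_{q'-1}(\dd_i \overline{u}^j)}_{L^4_t L^\infty_x} \leqslant C\norm{\dd_i\overline{u}^j}_{L^4_tL^\infty_x} \leqslant CR\norm{\overline{u}^j}_{L^4_tL^\infty_x} \leqslant CR^{(5+2\delta)/2}\eps^{1/4}\norm{U_0}_{L^2},
\end{equation*}
and couples this with $\norm{\Delta_{q'}v}_{L^\infty_tL^2_x} \leqslant c_{q'}(v)2^{-q's}\norm{v}_{\widetilde L^\infty H^s}$ and $\norm{\tq w}_{L^\infty_tL^2_x} \leqslant c_q(w)2^{-qs}\norm{w}_{\widetilde L^\infty H^s}$. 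Summing over $\abs{q'-q}\leqslant 4$ gives the desired bound with $b_q^{(1)} = c_q(w)\sum_{\abs{q'-q}\leqslant 4}2^{-(q'-q)s}c_{q'}(v)$, which is clearly in $\ell^1$.

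For $I_2$ the sum $\sloin$ is infinite, so the global Strichartz bound is too lossy. The key observation is that $\Delta_{q'}\overline{u}^j$ is itself a solution of \eqref{eq:free_wave} with initial data $\Delta_{q'}\Prr U_0$, whose Fourier support still lies inside $\mathcal{C}_{r/2,2R}$; hence Theorem \ref{th:Strichartz} applies dyadically and, combined with Bernstein (using $2^{q'}\leqslant CR$) and \eqref{eq:DeltaqHs}, gives
\begin{equation*}
\norm{\Delta_{q'}(\dd_i\overline{u}^j)}_{L^4_tL^\infty_x} \leqslant C R^{(5+2\delta)/2}\eps^{1/4}\,c_{q'}(U_0)\,2^{-q's}\,\norm{U_0}_{H^s}.
\end{equation*}
Using $\norm{S_{q'+2}v}_{L^\infty_tL^2_x}\leqslant \norm{v}_{\widetilde L^\infty H^s}$ and the same Hölder–Strichartz scheme, each term of $I_2$ contributes the prefactor $t^{3/4}R^{(5+2\delta)/2}\eps^{1/4}\mathcal{C}(U_0^\eps)\norm{v}_{\widetilde L^\infty H^s}\norm{w}_{\widetilde L^\infty H^s}$ multiplied by $c_q(w)2^{-qs}\cdot c_{q'}(U_0)2^{-q's}$. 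Factoring out $2^{-2qs}$, the residual coefficient is $b_q^{(2)} = c_q(w)\sum_{q'>q-4}c_{q'}(U_0)\,2^{-(q'-q)s}$. A Cauchy–Schwarz argument shows that the inner sum lies in $\ell^2_q$, so $b_q^{(2)} \in \ell^1$ as the product of two $\ell^2$ sequences.

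The main technical obstacle is precisely this summation over $q'>q-4$ in $I_2$: a crude bound replaces Strichartz for $\Delta_{q'}\overline{u}^j$ by the global $L^4_tL^\infty_x$ Strichartz bound on $\overline{u}^j$, which loses the factor $c_{q'}(U_0)2^{-q's}$ and destroys the $b_q$-summability; passing to the dyadic Strichartz inequality is the delicate step that makes the argument close. Adding the contributions of $I_1$ and $I_2$ and absorbing constants into $\mathcal{C}(U_0^\eps)$ yields \eqref{eq:biprod_cut2}.
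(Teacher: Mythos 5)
Your argument is correct, but it organizes Bony's decomposition the opposite way from the paper's proof, and this changes where the work lies. The paper applies \eqref{eq:Bonydec} so that the dyadic block falls on $\overline{u}^j$ only in the \emph{finite} sum $\spres$ (its term $J_B$), while the infinite sum $\sloin$ carries $\Delta_{q'}v$, whose decay $c_{q'}(v)2^{-q's}$ makes summability immediate; the low-frequency factor $S_{q'+2}\pare{\dd_i\overline{u}^j}$ in that infinite sum is simply dominated by the global Strichartz bound \eqref{eq:Strichartz4} after Bernstein. You instead place $\Delta_{q'}\pare{\dd_i\overline{u}^j}$ in the infinite sum $\sloin$, and, as you correctly diagnose, the global $L^4_tL^\infty_x$ bound on $\bU$ is then too lossy; your fix --- observing that $\Delta_{q'}\bU$ solves \eqref{eq:free_wave} with data $\Delta_{q'}\Prr U_0 = \Prr\pare{\Delta_{q'}U_0}$, so that Theorem \ref{th:Strichartz} applies blockwise and yields the decaying coefficient $c_{q'}(U_0)\,2^{-q's}\norm{U_0}_{H^s}$ --- is exactly the dyadic Strichartz estimate the paper already uses for its finite-sum term $J_B$, and your Young/Cauchy--Schwarz argument for the summability of $b_q^{(2)}$ closes the remaining gap. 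Both routes rest on the same ingredients (Strichartz \eqref{eq:Strichartz4}, Bernstein with $2^{q'}\lesssim R$ from the support in $\mathcal{B}_3(0,2R)$, H\"older in time producing $t^{3/4}$, and \eqref{eq:DeltaqHs}); the paper's assignment of the factors just makes the remainder-type sum trivially summable, whereas yours requires the extra convolution estimate that you do supply.
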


\begin{lemma}
	\label{le:biprod_cut1}
	Let $T > 0$, $i \in \set{1,2,3}$, $\dd_i = \frac{\dd}{\dd x_i}$ and $\bU$ be the solution of the cut-off linear system \eqref{eq:free_wave}. For any $s > \frac{5}2$, for all functions $v$ and $w$ in $\widetilde{L}^\infty\pare{[0,T],\Hs}$, for any component $\overline{u}^j$ of $\bU$, $j \in \set{1,2,3,4}$, and for any $0 < t \leqslant T$, we have
	\begin{equation}
		\label{eq:biprod_cut1a}
		\int_0^t \abs{\psca{\tq \pare{\overline{u}^j(\tau) \, \dd_i v(\tau)} \;\big|\; \tq v(\tau)}_{L^2}} d \tau \\ 
		\leqslant C \;\mathcal{C}(U_0^\eps) \, R^{\frac{5+2\delta}2} t^{\frac{3}4} \eps^{\frac{1}4} b_q 2^{-2qs} \norm{v}_{\widetilde{L}^\infty\pare{[0,t], H^s}}^2,
	\end{equation}
	and
	\begin{multline}
		\label{eq:biprod_cut1b}
		\int_0^t \abs{\psca{\tq \pare{\overline{u}^j(\tau) \, \dd_i v(\tau)} \;\big|\; \tq w(\tau)}_{L^2} + \psca{\tq \pare{\overline{u}^j(\tau) \, \dd_i w(\tau)} \;\big|\; \tq v(\tau)}_{L^2} } d \tau \\ 
		\leqslant C \;\mathcal{C}(U_0^\eps) \, R^{\frac{5+2\delta}2} t^{\frac{3}4} \eps^{\frac{1}4} b_q 2^{-2qs} \norm{v}_{\widetilde{L}^\infty\pare{[0,t], H^s}} \norm{w}_{\widetilde{L}^\infty\pare{[0,t], H^s}},
	\end{multline}
	where $\;\mathcal{C}(U_0^\eps)$ is defined in \eqref{eq:C0} and $b_q$ is a summable sequence such that $\sum_q b_q = 1$.
\end{lemma}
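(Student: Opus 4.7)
The plan is to adapt the Bony-decomposition scheme of Lemma \ref{le:biproduct}, replacing the Sobolev embedding $\Hs\hookrightarrow\Woi$ used to bound the coefficient by a space-time H\"older inequality combined with the Strichartz estimate \eqref{eq:Strichartz4} for $\bU$. Since $\overline{u}^j$ is frequency-localised in $\Crr\subset\mathcal{B}_3(0,2R)$, Bernstein's Lemma \ref{lemma:Bernstein} yields $\norm{\partial_i\overline{u}^j(\tau)}_{L^\infty_x}\leqslant C R\norm{\overline{u}^j(\tau)}_{L^\infty_x}$, while H\"older in time together with \eqref{eq:Strichartz4} gives
\begin{equation*}
\int_0^t\norm{\overline{u}^j(\tau)}_{L^\infty_x}\,d\tau\leqslant t^{3/4}\norm{\bU}_{L^4([0,t],L^\infty_x)}\leqslant C R^{\frac{3+2\delta}{2}}t^{3/4}\eps^{1/4}\norm{U_0^\eps}_{L^2}.
\end{equation*}
The product of these two losses is exactly the factor $R^{(5+2\delta)/2}t^{3/4}\eps^{1/4}$ appearing in the conclusion.

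For \eqref{eq:biprod_cut1a}, I would apply the Bony decomposition \eqref{eq:Bonydec} to $\overline{u}^j\,\partial_i v$, splitting $\psca{\tq(\overline{u}^j\partial_i v)|\tq v}$ into a far-frequency piece $I_A$ (with $\Delta_{q'}\overline{u}^j$ as one factor) and a close-frequency piece $I_B$ (with $S_{q'-1}\overline{u}^j$ as one factor). The latter is further decomposed as $I_{B1}+I_{B2}+I_{B3}$ exactly as in Lemma \ref{le:biproduct}: the leading diagonal term $I_{B1}=\int_0^t\psca{\Sq\overline{u}^j\,\tq\partial_i v\,|\,\tq v}\,d\tau$ becomes, after integration by parts, $-\tfrac12\int_0^t\psca{\Sq\partial_i\overline{u}^j\,|\,|\tq v|^2}\,d\tau$ and is controlled by the Bernstein--Strichartz estimate above. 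The remainders $I_{B2}$ (where $\Sq-S_{q'-1}$ removes low frequencies from $\overline{u}^j$) and $I_{B3}$ (using the commutator estimate of Lemma \ref{commutator estimate}, which again produces $\norm{\nabla\overline{u}^j}_{L^\infty}$ with a compensating $2^{-q}$) are handled in the same fashion. The far-frequency piece $I_A$ is controlled by placing $\Delta_{q'}\overline{u}^j$ in $L^4_tL^\infty_x$ and the two remaining factors in $L^\infty_tL^2_x$; the sum over $q'>q-4$ is rendered convergent by the decay $2^{-(q'-q)s}$ coming from the $H^s$-norm on $v$, combined with the fact that $\Delta_{q'}\overline{u}^j\equiv 0$ for $2^{q'}\gtrsim R$.

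The symmetric estimate \eqref{eq:biprod_cut1b} follows the same scheme, with the additional ingredient that the antisymmetrisation
\begin{equation*}
\psca{\overline{u}^j\,\tq\partial_i v\,|\,\tq w}+\psca{\overline{u}^j\,\tq\partial_i w\,|\,\tq v}=-\psca{\partial_i\overline{u}^j\,\tq v\,|\,\tq w}
\end{equation*}
converts the derivative falling on $v$ or $w$ into one falling on $\overline{u}^j$, which is paid for by the Bernstein factor $R$. The off-diagonal paraproduct and commutator contributions are then estimated verbatim as in Lemma \ref{le:biproduct}, modulo the substitution of $\norm{\bU}_{L^4_tL^\infty_x}$ for the $L^\infty_tL^\infty_x$-norm of the coefficient used there.

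The principal difficulty is bookkeeping rather than conceptual: one has to organise the paraproduct and remainder terms so that every piece produces an $\ell^1$ sequence $\set{b_q}$ with the correct $2^{-2qs}$ decay, and verify that the single Bernstein factor $R$ combines with the Strichartz factor $R^{(3+2\delta)/2}$ to give exactly $R^{(5+2\delta)/2}$, without any further power of $R$ or $r^{-1}$ sneaking in from the $q'$-summation. The most delicate point is the control of $I_A$, where the high-frequency cutoff on $\overline{u}^j$, rather than a genuine decay estimate, must be used to close the sum on $q'$.
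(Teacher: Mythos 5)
Your proposal reproduces the paper's proof essentially verbatim: the same Bony splitting into a remainder piece and a paraproduct piece, the latter decomposed as in Lemma \ref{le:biproduct} into a diagonal term (handled by integration by parts), a term with $\Sq-S_{q'-1}$, and a commutator term via Lemma \ref{commutator estimate}, with the coefficient $\nabla\overline{u}^j$ placed in $L^1_tL^\infty_x$ through Bernstein, H\"older in time and the Strichartz bound \eqref{eq:Strichartz4}, yielding exactly $R\cdot R^{(3+2\delta)/2}t^{3/4}\eps^{1/4}=R^{(5+2\delta)/2}t^{3/4}\eps^{1/4}$. The one inaccuracy is your account of the remainder term $I_A$: the $q'$-decay needed to produce $2^{-2qs}b_q$ comes neither from the $H^s$-norm of $v$ (the factor $S_{q'+2}\dd_i v$ carries no decay in $q'$) nor from the bare frequency cutoff on $\overline{u}^j$ (summing $2^{q'}\lesssim R$ without decay only yields a single factor $2^{-qs}$), but from applying \eqref{eq:Strichartz4} blockwise, $\norm{\Delta_{q'}\bU}_{L^4_tL^\infty_x}\lesssim R^{3/2}r^{-1}\eps^{1/4}\norm{\Delta_{q'}\Prr U_0^\eps}_{L^2}\lesssim R^{3/2}r^{-1}\eps^{1/4}c_{q'}(U_0^\eps)2^{-q's}\norm{U_0^\eps}_{H^s}$, i.e.\ from the $H^s$ regularity of the initial data; with this substitution your argument closes exactly as in the paper.
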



\noindent \textit{Proof of Lemma \ref{le:biprod_cut2}.} We apply the same Bony decomposition into paraproducts and remainders as in \eqref{eq:bony_dec} and we have
\begin{equation}
	\label{eq:bony_cut}
	\int_0^t \abs{\psca{\tq \pare{v(\tau) \, \dd_i \overline{u}^j(\tau)} \;\big|\; \tq w(\tau)}_{L^2}} d\tau \leqslant J_A + J_B,
\end{equation}
where
\begin{align*}
	J_A &= \int_0^t \Big|\Big\langle\Delta_q \sloin S_{q'+2}\pare{\dd_i \overline{u}^j(\tau)} \Delta_{q'} v(\tau) \;\Big\vert\; \Delta_q w(\tau)\Big\rangle_{L^2}\Big| \, d\tau\\
	J_B &= \int_0^t \Big|\Big\langle\Delta_q \spres S_{q'-1} v(\tau) \Delta_{q'} \dd_i \overline{u}^j(\tau) \;\Big\vert\; \Delta_q w(\tau)\Big\rangle_{L^2}\Big| \, d\tau.
\end{align*}

For the term $J_A$, Lemma \ref{le:DeltaqHsbis} and similar estimates as in \eqref{bound_Iq4} imply
\begin{align*}
	J_A &\leqslant \sloin \norm{S_{q'+2} \, \dd_i \overline{u}^j}_{L^1([0,t],L^\infty)} \norm{\Delta_{q'} v}_{L^\infty([0,t],L^2)} \norm{\tq v}_{L^\infty([0,t],L^2)}\\
	&\leqslant CRt^{\frac{3}4} \norm{\bU}_{L^4\pare{[0,t], L^\infty}} 2^{-2qs} \pare{\sloin 2^{-(q'-q)s} c_{q'}(v) c_q(w)} \norm{v}_{\widetilde{L}^\infty\pare{[0,t], H^s}} \norm{w}_{\widetilde{L}^\infty\pare{[0,t], H^s}}
\end{align*} 
Using Estimate \eqref{eq:Strichartz4} and fixing $0 < r = R^{-\delta}$, $\delta > 0$, we have
\begin{equation}
	\label{eq:JA} J_A \leqslant C \;\mathcal{C}(U_0^\eps) R^{\frac{5+2\delta}2} t^{\frac{3}4} \eps^{\frac{1}4} b_q 2^{-2qs} \norm{v}_{\widetilde{L}^\infty\pare{[0,t], H^s}} \norm{w}_{\widetilde{L}^\infty\pare{[0,t], H^s}},
\end{equation}
where
$$\set{b_q}_q = \set{\ds\sumi 2^{-(q'-q) s } c_{q'}(v) c_q(w)}_q \in \ell^1.$$

The term $J_B$ is a little more difficult to estimate. Using H\"older inequality and the fact that $S_{q'-1}$ continuously maps $\Hs$ into $\2$, we have
\begin{align*}
	J_B &\leqslant \spres \norm{S_{q'-1} v}_{L^\infty\pare{[0,t], L^2}} \norm{\Delta_{q'} \dd_i \overline{u}^j}_{L^1\pare{[0,t], L^\infty}} \norm{\tq w}_{L^\infty\pare{[0,t], L^2}}\\
	&\leqslant C Rt^{\frac{3}4} \spres \norm{\Delta_{q'} \bU}_{L^4\pare{[0,t], L^\infty}} \norm{v}_{L^\infty\pare{[0,t], H^s}} \norm{\tq w}_{L^\infty\pare{[0,t], L^2}}.
\end{align*}
Estimate \eqref{eq:Strichartz4} implies
\begin{align}
	\label{eq:JB} J_B &\leqslant C R^{\frac{5+2\delta}2} t^{\frac{3}4} \eps^{\frac{1}4} \spres \norm{\Delta_{q'} \Prr U_0}_{L^2} \norm{v}_{L^\infty\pare{[0,t], H^s}} \norm{\tq w}_{L^\infty\pare{[0,t], L^2}}\\
	&\leqslant C R^{\frac{5+2\delta}2} t^{\frac{3}4} \eps^{\frac{1}4} b_q 2^{-2qs} \norm{U_0}_{H^s} \norm{v}_{L^\infty\pare{[0,t], H^s}} \norm{w}_{\widetilde{L}^\infty\pare{[0,t], H^s}} \notag\\
	&\leqslant C \;\mathcal{C}(U_0^\eps) R^{\frac{5+2\delta}2} t^{\frac{3}4} \eps^{\frac{1}4} b_q 2^{-2qs} \norm{v}_{\widetilde{L}^\infty\pare{[0,t], H^s}} \norm{w}_{\widetilde{L}^\infty\pare{[0,t], H^s}}, \notag
\end{align}
where
$$\set{b_q}_q = \set{\ds\spres 2^{-(q'-q) s } c_{q'}(U_0^\eps) c_q(w)}_q \in \ell^1.$$
Putting \eqref{eq:JA} and \eqref{eq:JB} into \eqref{eq:bony_cut}, we deduce Estimate \eqref{eq:biprod_cut2}. \hfill $\square$


\noindent \textit{Proof of Lemma \ref{le:biprod_cut1}.} As in the proof of Lemma \ref{le:biproduct}, we will only prove Estimate \eqref{eq:biprod_cut1b}. Estimate \eqref{eq:biprod_cut1a} will follow if we choose $v=w$. Applying the Bony decomposition into paraproducts and remainders, we have
\begin{equation}
	\label{eq:bony_cut1}
	\int_0^t \abs{\psca{\tq \pare{\overline{u}^j(\tau) \, \dd_i v(\tau)} \;\big|\; \tq w(\tau)}_{L^2} + \psca{\tq \pare{\overline{u}^j(\tau) \, \dd_i w(\tau)} \;\big|\; \tq v(\tau)}_{L^2} } d \tau \leqslant K_A + K_B,
\end{equation}
where
\begin{align*}
	K_A &= \int_0^t \Big\vert \Big\langle \Delta_q \sloin S_{q'+2}\pare{\dd_i v} \Delta_{q'} \overline{u}^j \;\Big\vert\; \Delta_q w \Big\rangle + \Big\langle \Delta_q \sloin S_{q'+2}\pare{\dd_i w} \Delta_{q'} \overline{u}^j \;\Big\vert\; \Delta_q v \Big\rangle \Big\vert (\tau) \, d\tau\\
	K_B &= \int_0^t \Big\vert \Big\langle \Delta_q \spres S_{q'-1} \overline{u}^j \Delta_{q'} \dd_i v \;\Big\vert\; \Delta_q w \Big\rangle + \Big\langle \Delta_q \spres S_{q'-1} \overline{u}^j \Delta_{q'} \dd_i w \;\Big\vert\; \Delta_q v \Big\rangle \Big\vert (\tau) \, d\tau.
\end{align*}

The term $K_A$ can be bounded by similar estimates as we did for $J_B$ in \eqref{eq:JB}
\begin{align}
	\label{eq:KA} K_A &\leqslant \sloin \norm{S_{q'+2} v}_{L^\infty([0,t],L^2)} \norm{\Delta_{q'} \dd_i \overline{u}^j}_{L^1([0,t],L^\infty)} \norm{\tq w}_{L^\infty([0,t],L^2)} \\
	&\qquad + \sloin \norm{S_{q'+2} w}_{L^\infty([0,t],L^2)} \norm{\Delta_{q'} \dd_i \overline{u}^j}_{L^1([0,t],L^\infty)} \norm{\tq v}_{L^\infty([0,t],L^2)} \notag\\
	&\leqslant C R^{\frac{5+2\delta}2} t^{\frac{3}4} \eps^{\frac{1}4} \sloin \norm{\Delta_{q'} \Prr U_0}_{L^2} \norm{v}_{L^\infty\pare{[0,t], H^s}} \norm{\tq w}_{L^\infty\pare{[0,t], L^2}} \notag\\
	&\qquad + C R^{\frac{5+2\delta}2} t^{\frac{3}4} \eps^{\frac{1}4} \sloin \norm{\Delta_{q'} \Prr U_0}_{L^2} \norm{w}_{L^\infty\pare{[0,t], H^s}} \norm{\tq v}_{L^\infty\pare{[0,t], L^2}} \notag\\
	&\leqslant C \;\mathcal{C}(U_0^\eps) R^{\frac{5+2\delta}2} t^{\frac{3}4} \eps^{\frac{1}4} b_q 2^{-2qs} \norm{v}_{\widetilde{L}^\infty\pare{[0,t], H^s}} \norm{w}_{\widetilde{L}^\infty\pare{[0,t], H^s}}, \notag
\end{align} 
where
$$\set{b_q}_q = \set{\ds\sloin 2^{-(q'-q) s } c_{q'}(U_0^\eps) \pare{c_q(v) + c_q(w)}}_q \in \ell^1.$$

The term $K_B$ is more difficult to estimate because we can not simply commute $S_{q'-1}$ and $\dd_i$. So, we use the same method as for the term $I_B$ of \eqref{eq:bony_dec} and we decompose
\begin{equation*}
	K_B \leqslant K_{B1} + K_{B2} + K_{B3},
\end{equation*}
where
\begin{align*}
	K_{B1} &= \int_0^t \abs{\psca{\Sq \overline{u}^j \; \tq \dd_i v \;\big\vert\; \tq w} + \psca{\Sq \overline{u}^j \; \tq \dd_i w \;\big\vert\; \tq v}} (\tau) \, d\tau\\
	K_{B2} &= \int_0^t \sumf \abs{\psca{\left( \Sq-S_{q'-1} \right) \overline{u}^j \; \Delta_{q'} \dd_i v \;\big|\; \tq w} + \psca{\left( \Sq-S_{q'-1} \right) \overline{u}^j \; \Delta_{q'} \dd_i w \;\big|\; \tq v}} (\tau) \, d\tau\\
	K_{B3} &= \int_0^t \sumf \abs{\psca{ \left[ \tq , S_{q'-1} \overline{u}^j \right]\Delta_{q'} \dd_i v \;\big|\; \tq w} + \psca{ \left[ \tq , S_{q'-1} \overline{u}^j \right]\Delta_{q'} \dd_i w \;\big|\; \tq v}} (\tau) \, d\tau.
\end{align*}
For $K_{B1}$, performing an integration by parts, we have
\begin{align}
	\label{eq:KB1} K_{B1} &= \int_0^t \abs{\Sq (\dd_i \overline{u}^j(\tau)) \; \tq v(\tau) \; \tq w(\tau)} \, d\tau\\ 
	&\leqslant \norm{\Sq \dd_i \bU}_{L^1([0,t],L^\infty)} \norm{\tq v}_{L^\infty([0,t],L^2)} \norm{\tq w}_{L^\infty([0,t],L^2)} \notag\\ 
	&\leqslant C \;\mathcal{C}(U_0^\eps) R^{\frac{5+2\delta}2} t^{\frac{3}4} \eps^{\frac{1}4} b_q 2^{-2qs} \norm{v}_{\widetilde{L}^\infty\pare{[0,t], H^s}} \norm{w}_{\widetilde{L}^\infty\pare{[0,t], H^s}}, \notag
\end{align}
where $\set{b_q}_q = \set{c_q(v) c_q(w)}_q$ is a summable sequence. For $K_{B2}$, we have
\begin{align}
	\label{eq:KB2} K_{B2} &\leqslant C \int_0^t \spres \norm{(\Sq-S_{q'-1}) \overline{u}^j(\tau)}_{L^\infty} 2^{q'} \norm{\Delta_{q'} v(\tau)}_{L^2} \norm{\Delta_q w(\tau)}_{L^2} d\tau\\
	&\qquad + C \int_0^t \spres \norm{(\Sq-S_{q'-1}) \overline{u}^j(\tau)}_{L^\infty} 2^{q'} \norm{\Delta_{q'} w(\tau)}_{L^2} \norm{\Delta_q v(\tau)}_{L^2} d\tau \notag\\
	&\leqslant C \spres \norm{(\Sq-S_{q'-1}) \nabla \bU}_{L^1([0,t],L^\infty)} \norm{\Delta_{q'} v}_{L^\infty([0,t],L^2)} \norm{\Delta_q w}_{L^\infty([0,t],L^2)} \notag \\
	&\qquad + C \spres \norm{(\Sq-S_{q'-1}) \nabla \bU}_{L^1([0,t],L^\infty)} \norm{\Delta_{q'} w}_{L^\infty([0,t],L^2)} \norm{\Delta_q v}_{L^\infty([0,t],L^2)} \notag \\
	&\leqslant  C \;\mathcal{C}(U_0^\eps) R^{\frac{5+2\delta}2} t^{\frac{3}4} \eps^{\frac{1}4} b_q 2^{-2qs} \norm{v}_{\widetilde{L}^\infty\pare{[0,t], H^s}} \norm{w}_{\widetilde{L}^\infty\pare{[0,t], H^s}}, \notag
\end{align}
where
$$\set{b_q}_q = \set{\spres 2^{-(q'-q) s } \pare{c_{q'}(v) c_q(w) + c_{q'}(w) c_q(v)}}_q \in \ell^1.$$
Finally, for $K_{B3}$ we can write
\begin{align}
	\label{eq:KB3} K_{B3}  &\leqslant C \sumf 2^{q'-q} \norm{S_{q'-1} \nabla  \overline{u}^j}_{L^1([0,t],L^\infty)} \norm{\Delta_{q'} v}_{L^\infty([0,t],L^2)} \norm{\tq w}_{L^\infty([0,t],L^2)}\\
	&\qquad + C \sumf 2^{q'-q} \norm{S_{q'-1} \nabla  \overline{u}^j}_{L^1([0,t],L^\infty)} \norm{\Delta_{q'} w}_{L^\infty([0,t],L^2)} \norm{\tq v}_{L^\infty([0,t],L^2)} \notag\\
	&\leqslant C \;\mathcal{C}(U_0^\eps) R^{\frac{5+2\delta}2} t^{\frac{3}4} \eps^{\frac{1}4} b_q 2^{-2qs} \norm{v}_{\widetilde{L}^\infty\pare{[0,t], H^s}} \norm{w}_{\widetilde{L}^\infty\pare{[0,t], H^s}}, \notag
\end{align}
where
$$\set{b_q}_q = \set{\spres 2^{-(q'-q)(s-1) } \pare{c_{q'}(v) c_q(w) + c_{q'}(w) c_q(v)}}_q \in \ell^1.$$
Summing Estimates \eqref{eq:KA} to \eqref{eq:KB3} and putting the obtained result into \eqref{eq:bony_cut1}, we deduce Inequality \eqref{eq:biprod_cut1b} of Lemma \ref{le:biprod_cut1}. \hfill $\square$

\bigskip

\textit{Proof of Lemma \ref{le:bi_bootstrap}} We recall the decomposition of $U^\varepsilon$ as the sum 
\begin{equation*}
	U^\varepsilon= \bU + \tU,
\end{equation*}
then, we can write
\begin{equation*}
	\int_0^t \abs{\psca{\tq \pare{u^\varepsilon(\tau) \cdot \nabla U^\varepsilon(\tau)} \;\Big|\; \tq \tU(\tau)}_{L^2}} \, d\tau \leqslant A_1 + A_2 + A_3 + A_4,
\end{equation*}
where
\begin{align*}
	A_1 &= \int_0^t \abs{\psca{\tq \pare{\overline{u}^\eps(\tau) \cdot \nabla \bU(\tau)} \;\Big|\; \tq \tU(\tau)}_{L^2}} \, d\tau\\
	A_2 &= \int_0^t \abs{\psca{\tq \pare{\widetilde{u}^\eps(\tau) \cdot \nabla \bU(\tau)} \;\Big|\; \tq \tU(\tau)}_{L^2}} \, d\tau\\
	A_3 &= \int_0^t \abs{\psca{\tq \pare{\overline{u}^\eps(\tau) \cdot \nabla \tU(\tau)} \;\Big|\; \tq \tU(\tau)}_{L^2}} \, d\tau\\
	A_4 &= \int_0^t \abs{\psca{\tq \pare{\widetilde{u}^\eps(\tau) \cdot \nabla \tU(\tau)} \;\Big|\; \tq \tU(\tau)}_{L^2}} \, d\tau.
\end{align*}
Using Lemma \ref{le:biprod_cut2}, we have
\begin{align*}
	A_1 &\leqslant C \;\mathcal{C}(U_0^\eps) \, b_q 2^{-2qs} R^{\frac{5+2\delta}2} t^{\frac{3}4} \eps^{\frac{1}4} \norm{\tU}_{\widetilde{L}^\infty\pare{[0,t], H^s}}\\
	A_2 &\leqslant C \;\mathcal{C}(U_0^\eps) \, b_q 2^{-2qs} R^{\frac{5+2\delta}2} t^{\frac{3}4} \eps^{\frac{1}4} \norm{\tU}_{\widetilde{L}^\infty\pare{[0,t], H^s}}^2.
\end{align*}
For $A_3$, using Lemma \ref{le:biprod_cut1}, we have
\begin{equation*}
	A_3 \leqslant C \;\mathcal{C}(U_0^\eps) \, b_q 2^{-2qs} R^{\frac{5+2\delta}2} t^{\frac{3}4} \eps^{\frac{1}4} \norm{\tU}_{\widetilde{L}^\infty\pare{[0,t], H^s}}^2.
\end{equation*}
Finally, for $A_4$, Lemma \ref{le:biproduct} and the Sobolev embedding $H^s\pare{\RR^3} \hookrightarrow W^{1,\infty}\pare{\RR^3}$, with $s > \frac{5}{2}$, simply yield
\begin{equation*}
	A_4 \leqslant C b_q 2^{-2qs} \norm{\tU}_{\widetilde{L}^\infty\pare{[0,t], H^s}} \norm{\tU}_{\widetilde{L}^2\pare{[0,t], H^s}}^2 \leq C b_q 2^{-2qs} t \norm{\tU}_{\widetilde{L}^\infty\pare{[0,t], H^s}} \norm{\tU}_{\widetilde{L}^\infty\pare{[0,t], H^s}}^2.
\end{equation*}

\bigskip

Now, we can prove \eqref{eq:bi_bootstrap2} exactly in the same way as we do to prove \eqref{eq:bi_bootstrap}. We can decompose the term on the right hand side of \eqref{eq:bi_bootstrap2} as
\begin{multline*}
	\int_0^t \abs{\psca{\tq \pare{b^\varepsilon (\tau) \nabla b^\varepsilon (\tau)} \;\big\vert\; \tq \widetilde{u}^\varepsilon (\tau)}_{L^2} + \psca{\tq \left( b^\varepsilon (\tau) \dive u^\varepsilon (\tau) \right) \;\big\vert\; \tq \widetilde{b}^\varepsilon (\tau)}_{L^2} } \, d\tau\\ \leqslant A'_1 + A'_2 + A'_3 + A'_4,
\end{multline*}
where 
\begin{align*}
	A'_1= & \int_0^t \abs{\psca{\tq \pare{\overline{b}^\varepsilon \left( \tau \right) \nabla \overline{b}^\varepsilon \left( \tau \right) } \;\big\vert\; \tq \widetilde{u}^\varepsilon (\tau)}_{L^2} + \psca{\tq \left( \overline{b}^\varepsilon(\tau) \dive \overline{u}^\varepsilon(\tau) \right) \;\big\vert\; \tq \widetilde{b}^\varepsilon \left( \tau \right) }_{L^2} } \, d\tau\\
	A'_2= & \int_0^t \abs{\psca{\tq \pare{\widetilde{b}^\varepsilon \left( \tau \right) \nabla \overline{b}^\varepsilon \left( \tau \right) } \;\big\vert\; \tq \widetilde{u}^\varepsilon (\tau)}_{L^2} + \psca{\tq \left( \widetilde{b}^\varepsilon (\tau) \dive \overline{u}^\varepsilon(\tau) \right) \;\big\vert\; \tq \widetilde{b}^\varepsilon \left( \tau \right) }_{L^2} } \, d\tau\\
	A'_3= & \int_0^t \abs{\psca{\tq \pare{\overline{b}^\varepsilon \left( \tau \right) \nabla \widetilde{b}^\varepsilon \left( \tau \right) } \;\big\vert\; \tq \widetilde{u}^\varepsilon (\tau)}_{L^2} + \psca{\tq \left( \overline{b}^\varepsilon (\tau) \dive \widetilde{u}^\varepsilon(\tau) \right) \;\big\vert\; \tq \widetilde{b}^\varepsilon \left( \tau \right) }_{L^2} } \, d\tau\\
	A'_4= & \int_0^t \abs{\psca{\tq \pare{\widetilde{b}^\varepsilon \left( \tau \right) \nabla \widetilde{b}^\varepsilon \left( \tau \right) } \;\big\vert\; \tq \widetilde{u}^\varepsilon (\tau)}_{L^2} + \psca{\tq \left( \widetilde{b}^\varepsilon (\tau) \dive \widetilde{u}^\varepsilon(\tau) \right) \;\big\vert\; \tq \widetilde{b}^\varepsilon \left( \tau \right) }_{L^2} } \, d\tau.
\end{align*}
Using Lemma \ref{le:biprod_cut2}, we have
\begin{align*}
	A'_1 &\leqslant C \;\mathcal{C}(U_0^\eps) \, b_q 2^{-2qs} R^{\frac{5+2\delta}2} t^{\frac{3}4} \eps^{\frac{1}4} \norm{\tU}_{\widetilde{L}^\infty\pare{[0,t], H^s}}\\
	A'_2 &\leqslant C \;\mathcal{C}(U_0^\eps) \, b_q 2^{-2qs} R^{\frac{5+2\delta}2} t^{\frac{3}4} \eps^{\frac{1}4} \norm{\tU}_{\widetilde{L}^\infty\pare{[0,t], H^s}}^2.
\end{align*}
Next, Lemma \ref{le:biprod_cut1} yields
\begin{equation*}
	A'_3 \leqslant C \;\mathcal{C}(U_0^\eps) \, b_q 2^{-2qs} R^{\frac{5+2\delta}2} t^{\frac{3}4} \eps^{\frac{1}4} \norm{\tU}_{\widetilde{L}^\infty\pare{[0,t], H^s}}^2.
\end{equation*}
Finally, Lemma \ref{le:biproduct} and the Sobolev embedding $H^s\pare{\RR^3} \hookrightarrow W^{1,\infty}\pare{\RR^3}$, with $s > \frac{5}{2}$, imply
\begin{equation*}
	A'_4 \leqslant C b_q 2^{-2qs} \norm{\widetilde{u}^\eps}_{\widetilde{L}^\infty\pare{[0,t], H^s}} \norm{\widetilde{b}^\eps}_{\widetilde{L}^2\pare{[0,t], H^s}}^2 \leq C b_q 2^{-2qs} t \norm{\tU}_{\widetilde{L}^\infty\pare{[0,t], H^s}} \norm{\tU}_{\widetilde{L}^\infty\pare{[0,t], H^s}}^2.
\end{equation*}
Lemma \ref{le:bi_bootstrap} is then proved. \hfill $\square$

\bigskip

\end{document}